\documentclass[11pt,a4paper]{article}

\usepackage{epsf,epsfig,amsfonts,amsgen,amsmath,amstext,amsbsy,amsopn,amsthm
%,lineno
}
\usepackage{amsmath,times,mathptmx}
\usepackage{amsfonts,amsthm,amssymb}
\usepackage{amsfonts}
\usepackage{graphics}
\usepackage{latexsym,bm}
\usepackage{amsfonts,amsthm,amssymb,bbding}
\usepackage{indentfirst}
\usepackage{graphicx}
\usepackage{color}
\usepackage[colorlinks=true,anchorcolor=blue,filecolor=blue,linkcolor=blue,urlcolor=blue,citecolor=blue]{hyperref}
\usepackage{float}
\usepackage{tikz}
%\allowdisplaybreaks[4]
%\usepackage{moresize}
\setlength{\textwidth}{150mm} \setlength{\oddsidemargin}{7mm}
\setlength{\evensidemargin}{7mm} \setlength{\topmargin}{-5mm}
\setlength{\textheight}{245mm} \topmargin -18mm

\pagestyle{myheadings} \markright{} \textwidth 150mm \textheight 235mm \oddsidemargin=1cm
\evensidemargin=\oddsidemargin\topmargin=-1.5cm

\newtheorem{thm}{Theorem}[section]

\newtheorem{lem}{Lemma}[section]
\newtheorem{cor}{Corollary}[section]

\newtheorem{conj}{Conjecture}[section]
\newtheorem{claim}{Claim}[section]
\newtheorem{definition}{Definition}[section]

\addtocounter{section}{0}

\begin{document}
%\captionsetup[figure]{labelfont={bf},labelformat={default},labelsep=period,name={Fig.}}
\title{ Extremal spectral results of planar graphs without vertex-disjoint cycles\footnote{Lin was supported by
NSFC grant 12271162 and Natural Science Foundation of Shanghai (No. 22ZR1416300). Shi was supported by NSFC grant 12161141006.}}
\author{{\bf Longfei Fang$^{a,b}$}, {\bf Huiqiu Lin$^a$}\thanks{Corresponding author: huiqiulin@126.com
(H. Lin).}, {\bf Yongtang Shi$^c$} \\
\small $^{a}$ School of Mathematics, East China University of Science and Technology, \\
\small  Shanghai 200237, China\\
\small $^{b}$ School of Mathematics and Finance, Chuzhou University, \\
\small  Chuzhou, Anhui 239012, China\\
\small $^{c}$Center for Combinatorics and LPMC, Nankai University, Tianjin 300071, China\\
}

\date{}
\maketitle
{\flushleft\large\bf Abstract}
Given a planar graph family $\mathcal{F}$, let ${\rm ex}_{\mathcal{P}}(n,\mathcal{F})$ and ${\rm spex}_{\mathcal{P}}(n,\mathcal{F})$ be the maximum size and maximum spectral radius over all $n$-vertex $\mathcal{F}$-free planar graphs, respectively.
Let $tC_{\ell}$ be the disjoint union of $t$ copies of $\ell$-cycles,
and $t\mathcal{C}$ be the family of $t$ vertex-disjoint cycles without length restriction. Tait and Tobin [Three conjectures in extremal spectral graph theory,
J. Combin. Theory Ser. B 126 (2017) 137--161] determined that $K_2+P_{n-2}$ is the extremal spectral graph among all planar graphs with sufficiently large order $n$, which implies the extremal graphs of both ${\rm spex}_{\mathcal{P}}(n,tC_{\ell})$ and ${\rm spex}_{\mathcal{P}}(n,t\mathcal{C})$ for $t\geq 3$ are $K_2+P_{n-2}$. In this paper, we first determine ${\rm spex}_{\mathcal{P}}(n,tC_{\ell})$ and ${\rm spex}_{\mathcal{P}}(n,t\mathcal{C})$ and characterize the unique extremal graph for $1\leq t\leq 2$, $\ell\geq 3$ and sufficiently large $n$.
Secondly, we obtain the exact values of ${\rm ex}_{\mathcal{P}}(n,2C_4)$ and ${\rm ex}_{\mathcal{P}}(n,2\mathcal{C})$,
which solve a conjecture of Li [Planar Tur\'an number of the disjoint union of cycles, Discrete Appl. Math. 342 (2024) 260--274] for $n\geq 2661$.
\begin{flushleft}
\textbf{Keywords:} Spectral radius; Tur\'{a}n number; Planar graph; Vertex-disjoint cycles; Quadrilateral
\end{flushleft}
\textbf{AMS Classification:} 05C35; 05C50

\section{Introduction}
%All graphs considered in this paper are finite and simple.
%For a graph $G$ with vertex set $V(G)$ and edge set $E(G)$, we denote by $|G|$
%the number of vertices and $e(G)$ the number of edges, respectively.
%As usual, \textit{$K_n$}, \textit{$C_n$} and \textit{$P_n$} denote
%the complete graph, cycle and path on $n$ vertices, respectively.
%Given $v\in V(G)$ and $S\subseteq V(G)$.
%Let $N_G(v)$ be the \textit{neighborhood} of $v$, $N_S(v)=N_G(v)\cap S$ and $d_S(v)=|N_S(v)|$.
%The subgraph of $G$ induced by $S$ is denoted by $G[S]$,
%and the notation $G[V(G)\setminus S]$ is simplified by $G-S$.
%Given two vertex-disjoint graphs $G$ and $H$.
%Let $G+H$ be the join and $G\cup H$ be the union, of $G$ and $H$, respectively.
%In particular, let $tC_k$ be the disjoint union of $t$ copies of $C_k$ and $t\mathcal{C}$ be the family of $t$ vertex-disjoint cycles.

Given a graph family $\mathcal{F}$,
a graph is said to be \textit{$\mathcal{F}$-free}
if it does not contain any $F\in\mathcal{F}$ as a subgraph.
When $\mathcal{F}=\{F\}$, we write $F$-free instead of $\mathcal{F}$-free.
One of the earliest results in extremal graph theory is the Tur\'{a}n's theorem,
which gives the maximum number of edges in an $n$-vertex $K_k$-free graph.
The \emph{Tur\'{a}n number} ${\rm ex}(n,\mathcal{F})$ is the maximum number of edges in an $\mathcal{F}$-free graph on $n$ vertices.
Much attention has been given to Tur\'{a}n numbers on cycles.
F\"{u}redi and Gunderson \cite{Furedi} determined ${\rm ex}(n,C_{2k+1})$ for all $n$ and $k$.
However, the exact value of ${\rm ex}(n,C_{2k})$ is still open.
Erd\H{o}s \cite{Erdos} determined ${\rm ex}(n,tC_3)$ for $n\geq 400(t-1)^2$,
and the unique extremal graph is characterized.
Subsequently, Moon \cite{Moon} showed that Erd\H{o}s's result is still valid whenever $n>\frac{9t-11}{2}$.
Erd\H{o}s and P\'{o}sa \cite{Erd} showed that ${\rm ex}(n,t\mathcal{C})=(2t-1)(n-t)$ for $t\geq 2$ and $n\geq 24t$.
For more results on Tur\'{a}n-type problem, we refer the readers to the survey paper \cite{KC}.
In this paper, we initiate the  study of spectral extremal values and Tur\'{a}n numbers on cycles in planar graphs.

%The spectral extremal value of a graph family $\mathcal{F}$, denoted by $spex(n,\mathcal{F})$,
%is the maximum spectral radius over all $n$-vertex $\mathcal{F}$-free graphs.
%We use $J_n$ to denote the graph obtained from $K_1+(n-1)K_1$
%by embedding a maximum matching within its independent set.
%Nikiforov determined $spex(n,C_{2\ell+1})$ \cite{Nikiforov2} for sufficiently large $n$, and proved that $spex(n,C_4)\leq \frac{1+\sqrt{4n-3}}2$ \cite {Nikiforov5}, with equality if and only if $n$ is odd and the extremal graph
%is the $n$-vertex friendship graph (i.e., $J_n$).
%Subsequently, Zhai and Wang \cite{Zhai-3} determined $spex(n,C_4)$ for even $n$ by showing that its unique extremal graph is also $J_n$.
%Recently,
%Cioab\u{a}, Desai and Tait \cite{Cioaba1}  determined $spex(n,C_{2\ell})$ for $\ell\geq 3$ and sufficiently large $n$.

\subsection{Planar spectral extremal values on cycles} \label{sub1.1}

One extension of the classical Tur\'{a}n number is to study extremal spectral radius in a planar graph with a forbidden structure.
The planar spectral extremal value of a given graph family $\mathcal{F}$, denoted by ${\rm spex}_{\mathcal{P}}(n,\mathcal{F})$,
is the maximum spectral radius over all $n$-vertex $\mathcal{F}$-free planar graphs.
An $\mathcal{F}$-free planar graph on $n$ vertices with maximum spectral radius is called an \textit{extremal graph} to ${\rm spex}_{\mathcal{P}}(n,\mathcal{F})$.
Boots and Royle \cite{Boots} and independently Cao and Vince \cite{Cao} conjectured that $K_2+P_{n-2}$ is the unique planar graph with the maximum spectral radius
where `+' means the join product.
The conjecture was finally proved by Tait and Tobin \cite{Tait} for sufficiently large $n$.

In order to study the spectral extremal problems on planar graphs, we first give a useful theorem which will be frequently used in the following.
\begin{thm}\label{theorem1.1}
Let $F$ be a planar graph and $n\geq \max\{2.16\times 10^{17}, 2|V(F)|\}$.
If $F$ is a subgraph of $2K_1+P_{n/2}$ but not of $K_{2,n-2}$,
then the extremal graph to ${\rm spex}_{\mathcal{P}}(n,F)$ contains a copy of $K_{2,n-2}$.
\end{thm}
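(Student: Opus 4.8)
The plan is to obtain a lower bound on the spectral radius of the extremal graph from the construction $K_{2,n-2}$, transfer the ``two dominant vertices'' structure of Tait and Tobin, and then promote this to full domination by a local swap whose $F$-freeness is the crux. Let $G$ be an extremal graph for ${\rm spex}_{\mathcal{P}}(n,F)$, with Perron eigenvector $x$ scaled so that $\max_v x_v=1$. Because $F\not\subseteq K_{2,n-2}$, the planar graph $K_{2,n-2}$ is $F$-free and therefore competes with $G$, giving
\[
\rho(G)\ \geq\ \rho(K_{2,n-2})\ =\ \sqrt{2n-4}.
\]

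Next I would reuse the portion of the Tait--Tobin analysis that depends only on planarity and on the bound $\rho(G)\geq\sqrt{2n-4}$ (and hence applies once $n\geq 2.16\times 10^{17}$), but not on global extremality. This yields two vertices $u_1,u_2$ with $x_{u_1},x_{u_2}=1-o(1)$, with $\sum_{v\notin\{u_1,u_2\}}x_v=O(1)$, and with $d(u_1),d(u_2)\geq n-o(n)$, and it shows that all but $O(1)$ vertices lie in the common neighbourhood $N(u_1)\cap N(u_2)$. Writing $R=V(G)\setminus\{u_1,u_2\}$, planarity of $K_2+G[R]$ forces $G[R]$ to be a linear forest, so each vertex of $R$ has at most two neighbours inside $R$; the only edges of $G$ outside the bipartite skeleton between $\{u_1,u_2\}$ and $R$ are these forest edges together with the possible edge $u_1u_2$.

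It remains to show that \emph{every} vertex of $R$ is adjacent to both $u_1$ and $u_2$, which is precisely $K_{2,n-2}\subseteq G$. Assume some $w\in R$ fails this, and form $G'$ from $G$ by making $N_{G'}(w)=\{u_1,u_2\}$, that is, by adding the missing edge(s) from $w$ to $\{u_1,u_2\}$ and deleting the at most two edges from $w$ into $R$. Since $u_1,u_2$ have $n-o(n)$ common neighbours, their planar embedding contains a quadrilateral face through $u_1$ and $u_2$ into which $w$ can be placed, so $G'$ remains planar; and because $x_{u_1}+x_{u_2}=2-o(1)$ while $\sum_{y\in R}x_y=O(1)$ and $w$ loses only the $o(1)$ weight of its $\leq 2$ forest neighbours, a Rayleigh-quotient estimate gives $\rho(G')>\rho(G)$. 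The main obstacle is to verify that $G'$ is still $F$-free, for otherwise the swap does not contradict extremality. As $G'-w=G-w\subseteq G$ is $F$-free, any copy $\phi$ of $F$ in $G'$ must use $w$; since $w$ has degree two in $G'$ with $N_{G'}(w)=\{u_1,u_2\}$, the pre-image $\phi^{-1}(w)$ is a vertex of $F$ of degree at most two both of whose neighbours map into $\{u_1,u_2\}$. Because $|V(F)|\leq n/2$ and $u_1,u_2$ have $n-o(n)$ common neighbours, one may choose a common neighbour $z\in R$ outside the image of $\phi$ and re-route $\phi$ by sending $\phi^{-1}(w)$ to $z$; as $z\sim u_1,u_2$, this produces a copy of $F$ avoiding $w$, hence lying in $G$, a contradiction. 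Therefore $G'$ is $F$-free, contradicting the extremality of $G$.

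Consequently no such $w$ exists, every vertex of $R$ is a common neighbour of $u_1$ and $u_2$, and $G\supseteq K_{2,n-2}$. The delicate points to nail down are the transfer of the Tait--Tobin estimates in the $F$-free (rather than globally extremal) setting, the planar re-embedding during the swap, and above all the re-routing step; here the hypotheses play complementary roles. The condition $F\not\subseteq K_{2,n-2}$ is what makes the skeleton $K_{2,n-2}$ itself $F$-free, so that it simultaneously serves as the lower-bound construction and as a legitimate target for the re-routing. The condition $F\subseteq 2K_1+P_{n/2}$, combined with $n\geq 2|V(F)|$, guarantees $F\subseteq K_2+P_{n-2}$, so that the unconstrained planar optimum is \emph{not} $F$-free; this places the problem in the non-degenerate regime in which $G$ is a proper subgraph of $K_2+P_{n-2}$ and containment of the full bipartite skeleton is the genuine content of the statement.
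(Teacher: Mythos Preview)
Your overall strategy matches the paper's: use $K_{2,n-2}$ for the lower bound $\rho\ge\sqrt{2n-4}$, extract two large-entry vertices $u_1,u_2$ via Tait--Tobin type estimates, then swap any non-common-neighbour into the common neighbourhood and derive a contradiction from the Rayleigh quotient together with a rerouting argument for $F$-freeness. Your rerouting step (replace $w$ by an unused common neighbour $z$ of $u_1,u_2$) is essentially the same as the paper's, and it is correct.

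There is, however, a genuine gap. You write ``planarity of $K_2+G[R]$ forces $G[R]$ to be a linear forest'', with $R=V(G)\setminus\{u_1,u_2\}$, and then use this to say that the vertex $w$ being swapped has at most two neighbours in $R$. But $K_2+G[R]$ is not known to be planar here: that would require $u_1,u_2$ to dominate $R$ (and $u_1u_2\in E(G)$), which is exactly the conclusion you are trying to prove. At this stage you only know $d(u_1),d(u_2)\ge(1-o(1))n$, so a vertex $w$ outside the common neighbourhood may have many neighbours in $R$, and your Rayleigh estimate ``$w$ loses only the $o(1)$ weight of its $\le 2$ forest neighbours'' is unjustified. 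Relatedly, ``$\sum_{v\notin\{u_1,u_2\}}x_v=O(1)$'' and ``all but $O(1)$ vertices lie in the common neighbourhood'' are stronger than what the estimates based solely on planarity and $\rho\ge\sqrt{2n-4}$ deliver; one only obtains that the exceptional set has size at most $\varepsilon n$ and that individual entries off $\{u_1,u_2\}$ are at most a small constant (indeed, already for $K_{2,n-2}$ the non-centre entries sum to $\Theta(\sqrt{n})$).

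The paper sidesteps this circularity by splitting $V(G)\setminus\{u_1,u_2\}$ into the common neighbourhood $R=N(u_1)\cap N(u_2)$ and the remainder $R_1$. For any $w\in R_1$ one gets $d_R(w)\le 2$ directly from $K_{3,3}$-freeness of $G$ (three common neighbours of $u_1,u_2$ adjacent to $w$ would give a $K_{3,3}$ on $\{w,u_1,u_2\}$ versus those three vertices), while degrees inside $R_1$ are controlled by a degeneracy/peeling argument on the planar graph $G[R_1]$. With these bounds in hand the paper moves all of $R_1$ into the common neighbourhood in one step, which makes the Rayleigh bookkeeping clean. Your one-vertex-at-a-time swap can be salvaged along the same lines, but only after supplying the $d_R(w)\le 2$ bound and selecting $w$ via peeling; the linear-forest shortcut does not hold at this point of the argument.
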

%If $F\subseteq (2K_1)+P_{n/2}$ and $K_{2,n-2}$ is $F$-free,

Let $tC_{\ell}$ be the disjoint union of $t$ copies of $\ell$-cycles,
and $t\mathcal{C}$ be the family of $t$ vertex-disjoint cycles without length restriction.
For $t\geq 3$, it is easy to check that $K_2+P_{n-2}$ is $tC_{\ell}$-free and $t\mathcal{C}$-free.
Theorem \ref{theorem1.1} implies that $K_2+P_{n-2}$ is the extremal graph to ${\rm spex}_{\mathcal{P}}(n,tC_{\ell})$ and
${\rm spex}_{\mathcal{P}}(n,t\mathcal{C})$ for $t\geq 3$ and sufficiently large $n$.
For three positive integers $n,n_1,n_2$ with $n_1\geq n_2$ and $n\geq n_1+2n_2+2$,
we can find integers $\alpha$ and $\beta$ such that $1\leq \beta \leq n_2$ and $n-2=n_1+\alpha n_2+\beta$.
Let
$$H(n_1,n_2):=P_{n_1}\cup \alpha P_{n_2}\cup P_{\beta}.$$

In the paper, we give answers to ${\rm spex}_{\mathcal{P}}(n,tC_{\ell})$ for $t\in \{1,2\}$ as follows.

\begin{thm}\label{theorem1.2}
For integers $\ell\geq 3$ and $n\geq \max\{2.16\times 10^{17},9\times2^{\ell-1}+3\}$, the graph $K_2+H(2\ell-3,\ell-2)$ is the extremal graph to ${\rm spex}_{\mathcal{P}}(n,2C_{\ell})$.
\end{thm}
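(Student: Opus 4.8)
The plan is to combine the structural reduction furnished by Theorem~\ref{theorem1.1} with a spectral optimization over linear forests. First I would verify that $F=2C_\ell$ meets the hypotheses of Theorem~\ref{theorem1.1}: since every cycle of $K_{2,n-2}$ must pass through both vertices of the $2$-side, two vertex-disjoint cycles cannot coexist there, so $2C_\ell\not\subseteq K_{2,n-2}$; on the other hand, taking one apex of $2K_1+P_{n/2}$ together with a segment of $\ell-1$ path-vertices, and the other apex with a disjoint segment of $\ell-1$ path-vertices, exhibits $2C_\ell\subseteq 2K_1+P_{n/2}$ once $n/2\ge 2(\ell-1)$. Hence the extremal graph $G$ contains a spanning $K_{2,n-2}$; write $\{u,v\}$ for the $2$-side and $S=V(G)\setminus\{u,v\}$, so that $u$ and $v$ are adjacent to all of $S$.

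The next step is to pin down $G[S]$ using planarity. If some $s\in S$ had three neighbours in $S$, then $\{u,v,s\}$ against these three neighbours would form a $K_{3,3}$, which is impossible; thus $\Delta(G[S])\le 2$ and $G[S]$ is a disjoint union of paths and cycles. A cycle component $C$ would force the $3$-connected graph $2K_1+C$ into $G$, which must embed as a bipyramid in which $u$ and $v$ lie on no common face, so no further vertex of $S$ could be joined to both $u$ and $v$; this is only consistent with $G[S]=C_{n-2}$, which contains $2C_\ell$ for large $n$ and is therefore excluded. Consequently $G[S]$ is a linear forest $L$. Finally, adding the edge $uv$ keeps $G$ planar (it is a subgraph of the Tait--Tobin graph $K_2+P_{n-2}$) and, as shown next, does not create $2C_\ell$, so the extremal $G$ satisfies $G=K_2+L$.

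I would then translate $2C_\ell$-freeness into a combinatorial condition on $L$. Since $L$ is acyclic, every cycle of $G$ uses $u$ or $v$; two vertex-disjoint cycles must use them separately, each cycle being one apex together with a path of $L$. Thus $G\supseteq 2C_\ell$ exactly when $L$ contains two vertex-disjoint copies of $P_{\ell-1}$, i.e.\ when $\sum_i\lfloor a_i/(\ell-1)\rfloor\ge 2$ for the component orders $a_i$; the edge $uv$ is irrelevant here because a cycle through it uses both apices. So $G$ is $2C_\ell$-free iff $\sum_i\lfloor a_i/(\ell-1)\rfloor\le 1$, meaning at most one component has order in $[\ell-1,2\ell-3]$ and all others have order at most $\ell-2$.

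It remains to maximise $\rho(K_2+L)$ over such $L$, and this is where the main difficulty lies. Writing $\rho=\rho(G)$ and letting $\mathbf{x}$ be the Perron vector with $x_u=x_v=a$ by symmetry, the apex equation gives $\sum_{s\in S}x_s=(\rho-1)a$, while for $s\in S$ one has $\rho x_s=2a+\sum_{s'\in N_L(s)}x_{s'}$; since $\rho$ is large, of order $\sqrt{n}$, and $\Delta(L)\le 2$, each $x_s$ is nearly $2a/\rho$. Feeding this into the Rayleigh quotient shows that the contribution of the $L$-edges is, to leading order, an increasing function of $e(L)=(n-2)-c(L)$, where $c(L)$ is the number of components; hence maximising $\rho$ forces $c(L)$ to be minimum. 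Under the constraint above, the number of components is minimised precisely by taking one component of order $2\ell-3$ and the remaining ones of order $\ell-2$ with a single short remainder $P_\beta$, that is $L=H(2\ell-3,\ell-2)$. The delicate part is to make these eigenvector estimates quantitative enough to yield strict inequalities, thereby also giving uniqueness among configurations of equal edge count but different shape; I expect the hypothesis $n\ge 9\cdot 2^{\ell-1}+3$ to be exactly what makes the error terms controllable and the comparisons strict.
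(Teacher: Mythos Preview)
Your structural reduction---using Theorem~\ref{theorem1.1} to get $K_{2,n-2}\subseteq G$, then showing $G[S]$ is a linear forest $L$, then arguing $uv\in E(G)$, and finally translating $2C_\ell$-freeness into the constraint $n_1(L)\le 2\ell-3$, $n_2(L)\le \ell-2$---tracks the paper's Lemma~\ref{lemma3.1} and Claim~\ref{cla5.2} closely.

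The genuine gap is in the optimization. Your first-order Rayleigh argument only pins down the edge count $e(L)$, equivalently the number of components, and you assert that this is minimised ``precisely'' by $H(2\ell-3,\ell-2)$. That is false: for $\ell=5$ and $n-2=15$, both $H(7,3)=P_7\cup 2P_3\cup P_2$ and $P_6\cup 3P_3$ have four components, eleven edges, and satisfy the constraint; they even share the same degree sequence in $L$, so no expansion of the Rayleigh quotient based on local degree data alone will separate them. You acknowledge this difficulty but do not resolve it.

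What the paper supplies here is a local comparison (Lemma~\ref{lemma3.2}): given components $P_{s_1},P_{s_2}$ of $G[R]$ with $s_1\ge s_2$, shifting one vertex from $P_{s_2}$ to $P_{s_1}$ strictly increases $\rho$. The proof swaps two edges near the \emph{middles} of the two paths and controls the sign of $\mathbf{x}^T(A'-A)\mathbf{x}$ via an iterated eigenvector estimate (Claim~\ref{claim3.1}) of the form
\[
\rho^i(x_{v_{i+1}}-x_{v_i})\in\Bigl[\tfrac{2}{\rho}-\tfrac{6\cdot 2^i}{\rho^2},\ \tfrac{2}{\rho}+\tfrac{6\cdot 2^i}{\rho^2}\Bigr],
\]
valid for $i$ up to half the path length. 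These intervals stay on the positive side only when $\rho>6\cdot 2^{s_2/2}$, and since $s_2\le\ell-2$ this is exactly where the hypothesis $n\ge 9\cdot 2^{\ell-1}+3$ enters---so your intuition about its role is correct. With Lemma~\ref{lemma3.2} in hand the finish is two lines: if $n_1<2\ell-3$, shift from $P_{n_t}$ into $P_{n_1}$; if some $n_i<\ell-2$ for $2\le i\le t-1$, shift from $P_{n_t}$ into $P_{n_i}$; either way one stays within the constraint while strictly raising $\rho$. The missing ingredient is therefore this path-indexed recursion for eigenvector differences, not a sharper global perturbative expansion.
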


%From Theorems \ref{theorem1.2} and \ref{theorem1.4}, one can easily get answer to ${\rm spex}_{\mathcal{P}}(n,2\mathcal{C})$.

Theorem \ref{theorem1.4} implies that $K_2+H(3,1)$ is $2\mathcal{C}$-free for $n\geq 5$.
By Theorem \ref{theorem1.2}, $K_2+H(3,1)$ is the extremal graph to ${\rm spex}_{\mathcal{P}}(n,2C_3)$ for $n\geq 2.16\times 10^{17}$.
Moreover, a planar graph is $2C_3$-free when it is $2\mathcal{C}$-free.
Hence, one can easily get answer to ${\rm spex}_{\mathcal{P}}(n,2\mathcal{C})$.

\begin{cor}\label{cor1.1}
For $n\geq 2.16\times 10^{17}$, $K_2+H(3,1)$ is the extremal graph to ${\rm spex}_{\mathcal{P}}(n,2\mathcal{C})$.
\end{cor}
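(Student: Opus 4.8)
The plan is a short squeeze argument that leverages Theorem \ref{theorem1.2} at $\ell=3$ together with the fact, recorded just before the statement, that $K_2+H(3,1)$ is $2\mathcal{C}$-free for $n\geq 5$. The crucial structural observation is that two vertex-disjoint triangles form a (degenerate) instance of two vertex-disjoint cycles; consequently any graph containing a copy of $2C_3$ also contains a member of the family $2\mathcal{C}$. Taking the contrapositive, every $2\mathcal{C}$-free graph is automatically $2C_3$-free, so on $n$ vertices the class of $2\mathcal{C}$-free planar graphs is contained in the class of $2C_3$-free planar graphs. Maximizing the spectral radius over the smaller class therefore yields
$$
{\rm spex}_{\mathcal{P}}(n,2\mathcal{C})\leq {\rm spex}_{\mathcal{P}}(n,2C_3).
$$

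For the reverse inequality I would exhibit $K_2+H(3,1)$ as an admissible competitor. Since it is $2\mathcal{C}$-free, it is an $n$-vertex $2\mathcal{C}$-free planar graph, whence ${\rm spex}_{\mathcal{P}}(n,2\mathcal{C})\geq \rho(K_2+H(3,1))$. Now I invoke Theorem \ref{theorem1.2} with $\ell=3$, noting that $H(2\ell-3,\ell-2)=H(3,1)$ and that the hypothesis $n\geq\max\{2.16\times10^{17},\,9\times2^{\ell-1}+3\}$ reduces to $n\geq 2.16\times10^{17}$ in this range; the theorem identifies $K_2+H(3,1)$ as the extremal graph for $2C_3$, so $\rho(K_2+H(3,1))={\rm spex}_{\mathcal{P}}(n,2C_3)$. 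Chaining the two facts forces equality throughout:
$$
{\rm spex}_{\mathcal{P}}(n,2\mathcal{C})=\rho(K_2+H(3,1))={\rm spex}_{\mathcal{P}}(n,2C_3).
$$

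Finally, for uniqueness I would argue that any extremal graph $G$ for the $2\mathcal{C}$ problem attains spectral radius ${\rm spex}_{\mathcal{P}}(n,2C_3)$ and, being $2\mathcal{C}$-free, is also $2C_3$-free; hence $G$ is itself an extremal graph for the $2C_3$ problem. Because Theorem \ref{theorem1.2} asserts that $K_2+H(3,1)$ is the \emph{unique} extremal graph in that setting, we conclude $G\cong K_2+H(3,1)$. There is essentially no analytic obstacle here, since all the spectral work is already packaged inside Theorem \ref{theorem1.2}; the only point requiring care is the verification that $K_2+H(3,1)$ genuinely contains no two vertex-disjoint cycles of \emph{any} lengths (not merely no two disjoint triangles), and this is exactly the content of the cited $2\mathcal{C}$-freeness fact, which guarantees that the candidate graph is a legitimate member of the $2\mathcal{C}$-free family rather than only of the larger $2C_3$-free family.
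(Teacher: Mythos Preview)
Your proposal is correct and follows essentially the same approach as the paper: the paper's argument (given in the paragraph immediately preceding the corollary) is precisely the squeeze you describe, combining the $2\mathcal{C}$-freeness of $K_2+H(3,1)$ from Theorem \ref{theorem1.4}, the inclusion of $2\mathcal{C}$-free graphs among $2C_3$-free graphs, and Theorem \ref{theorem1.2} at $\ell=3$. Your added remark on uniqueness is a faithful spelling-out of what the paper leaves implicit.
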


We use $J_n$ to denote the graph obtained from $K_1+(n-1)K_1$
by embedding a maximum matching within its independent set.
Nikiforov \cite {Nikiforov5} and  Zhai and Wang \cite{Zhai-3} showed that $J_n$ is the extremal graph to ${\rm spex}(n,C_4)$ for odd and even $n$, respectively.
Clearly, $J_n$ is planar.
This implies that $J_n$ is the extremal graph to ${\rm spex}_{\mathcal{P}}(n,C_4)$.
Nikiforov \cite{Nikiforov2} and Cioab\u{a}, Desai, and Tait \cite{Cioaba1} determined the spectral extremal graph among $C_{\ell}$-free graphs for odd $\ell$ and even $\ell\geq6$, respectively.
Next we give the characterization of the spectral extremal graphs among $C_{\ell}$-free planar graphs.

\begin{thm}\label{theorem1.3}
For integers $\ell\geq 3$ and $n\geq \max\{2.16\times 10^{17},9\times2^{\lfloor\frac{\ell-1}{2}\rfloor}+3,\frac{625}{32}\lfloor\frac{\ell-3}{2}\rfloor^2+2\}$,\\
(i) $K_{2,n-2}$ is the unique extremal graph to ${\rm spex}_{\mathcal{P}}(n,C_{3})$ for $\ell=3$;\\
(ii) $K_2+H(\lceil\frac{\ell-3}{2}\rceil,\lfloor\frac{\ell-3}{2}\rfloor)$ is the unique extremal graph to ${\rm spex}_{\mathcal{P}}(n,C_{\ell})$ for $\ell\geq 5$.
%$J_n$ is the unique extremal graph to ${\rm spex}_{\mathcal{P}}(n,C_{4})$;\\
%(iii)
\end{thm}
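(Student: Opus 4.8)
The plan is to reduce the problem, via Theorem \ref{theorem1.1}, to an optimization of the spectral radius over joins $K_2+L$ with $L$ a linear forest, and then to pin down the optimal $L$. First I would check the hypotheses of Theorem \ref{theorem1.1} for $F=C_\ell$. The cycle $C_\ell$ is planar; it embeds in $2K_1+P_{n/2}$ by taking the two apices $u,v$ together with a subpath $p_1\cdots p_{\ell-2}$ and reading off the cycle $u\,p_1\cdots p_{\ell-3}\,v\,p_{\ell-2}\,u$ (for $\ell=3$ use $u\,p_1\,p_2\,u$); and $C_\ell\not\subseteq K_{2,n-2}$ whenever $\ell\neq 4$, since a complete bipartite graph with a part of size $2$ contains no cycle other than $C_4$. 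Hence for every $\ell\geq 3$ with $\ell\neq 4$ the extremal graph $G$ to ${\rm spex}_{\mathcal P}(n,C_\ell)$ contains a spanning copy of $K_{2,n-2}$, with parts $\{u,v\}$ and $W=\{w_1,\dots,w_{n-2}\}$.

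For $\ell=3$ this already finishes part (i): adding to $K_{2,n-2}$ either the edge $uv$ or any edge $w_iw_j$ produces a triangle with a common neighbour, so $G=K_{2,n-2}$. For $\ell\geq 5$ I would next record a structural lemma. Because $G$ is planar and $K_{2,n-2}$ is spanning, in the essentially unique planar embedding of $K_{2,n-2}$ every face is a quadrilateral $u\,w_i\,v\,w_{i+1}$, so each additional edge is either $uv$ or joins two $W$-vertices consecutive in the cyclic order; moreover the full cycle on $W$ cannot occur, as it would create a $C_\ell$ through $u$. Thus the edges inside $W$ form a linear forest $L=\bigcup_i P_{m_i}$, and $G\in\{2K_1,K_2\}+L$.

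The next step is to characterize $C_\ell$-freeness of $K_2+L$ through the component sizes $m_1\geq m_2\geq\cdots$. Every cycle uses $u$ or $v$: a cycle through a single apex is that apex plus a subpath, of length at most $m_1+1$, while a cycle through both apices splits into two arcs, each a subpath lying in a single component, so its length is $k_1+k_2+2$ with the subpaths drawn from at most two components. Running over all choices shows the set of attained cycle lengths is exactly $\{3,\dots,m_1+m_2+2\}$, whence $K_2+L$ is $C_\ell$-free if and only if $m_1+m_2\leq \ell-3$. Since having at least two components forces $m_1\leq \ell-4$, inserting $uv$ never creates a $C_\ell$ and strictly raises the spectral radius, so I may assume $uv\in E(G)$; it then remains to maximize $\rho(K_2+L)$ over linear forests $L$ on $n-2$ vertices with $m_1+m_2\leq \ell-3$.

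This last optimization is the heart of the matter, and the step I expect to be hardest. Writing $x$ for the Perron vector, the symmetry $u\leftrightarrow v$ gives $x_u=x_v$, and the apex equation $\rho x_u=x_u+\sum_{w\in W}x_w$ shows that (after normalizing $x_u=1$) maximizing $\rho$ is equivalent to maximizing the total mass $\sum_{w\in W}x_w$, whose per-component part is governed by a transfer recurrence in the length of the path. I would prove two monotonicity statements: that merging two components into a single longer path, while respecting the cap $\ell-3$ on the two largest, strictly increases $\rho$; and that the per-component mass is strictly convex in the path length, so that for fixed total order the mass is maximized by driving component sizes to their caps. Together these force the optimal forest to consist of one path of length $a=\lceil\frac{\ell-3}{2}\rceil$, as many paths of length $b=\lfloor\frac{\ell-3}{2}\rfloor$ as possible, and a single shorter remainder $P_\beta$ with $1\leq\beta\leq b$, i.e.\ $L=H(a,b)$; the strictness of the inequalities then yields uniqueness. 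The polynomial-in-$\ell$ lower bounds on $n$ (notably the $\frac{625}{32}\lfloor\frac{\ell-3}{2}\rfloor^2+2$ term) are precisely what is needed to keep these comparisons valid once the number of components grows linearly in $n$.
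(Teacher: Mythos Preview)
Your overall framework matches the paper's: reduce via Theorem~\ref{theorem1.1} to $G=K_2+L$ with $L$ a linear forest, characterize $C_\ell$-freeness by $m_1+m_2\leq\ell-3$ (this is the paper's Claim~\ref{cla5.3}), and then optimize. The structural reduction and the case $\ell=3$ are fine.

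The genuine gap is in the final optimization. Your two tools --- merging components and the ``convexity'' single-vertex move --- are essentially the paper's Lemma~\ref{lemma3.2} (the $(s_1,s_2)$-transformation). They suffice to show that the extremal $L$ must be of the form $H(\ell-3-k,k)$ for \emph{some} $1\leq k\leq\lfloor\frac{\ell-3}{2}\rfloor$: first force $m_1+m_2=\ell-3$, then force $m_3=\cdots=m_{t-1}=m_2$. But from any such $H(\ell-3-k,k)$ neither operation can be applied while preserving the constraint. Merging two $P_k$'s gives a $P_{2k}$, and a single-vertex move into any component produces a new second-largest of size $k+1$; in both cases the sum of the two largest becomes $(\ell-3-k)+(k+1)=\ell-2$ or larger. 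So every $H(\ell-3-k,k)$ is a local optimum under your moves, and neither merging nor convexity distinguishes between values of~$k$. (In fact, to second order the per-component mass is \emph{linear} in the path length, $M(P_m)\approx 2m/\rho+4(m-1)/\rho^2$; the real issue is the total edge count of $L$, which is maximized only at $k=\lfloor\frac{\ell-3}{2}\rfloor$.)

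The paper closes this gap by a different, explicit comparison. Assuming $n_2\leq\lfloor\frac{\ell-5}{2}\rfloor$, it builds a competitor $G'$ by deleting $n_2$ edges (one from $P_{n_1}$ and all $n_2-1$ from one copy of $P_{n_2}$) and adding $n_2+1$ edges (attaching each freed vertex to the endpoint of a different $P_{n_2}$), obtaining $K_2+\big(P_{n_1-1}\cup(n_2{+}1)P_{n_2+1}\cup\cdots\big)$, which is still $C_\ell$-free but has one more edge. The uniform bound $x_u\in[2/\rho,\,2/\rho+6/\rho^2]$ for $u\in R$ (Claim~\ref{Claim3.2}) then gives
\[
\rho(G')-\rho\;\geq\;\frac{2}{X^{\mathrm T}X}\Big((n_2{+}1)\cdot\tfrac{4}{\rho^2}-n_2\cdot\big(\tfrac{4}{\rho^2}+\tfrac{25}{\rho^3}\big)\Big)>0
\]
exactly when $\rho>\tfrac{25}{4}n_2$; this is precisely where the hypothesis $n\geq\tfrac{625}{32}\lfloor\frac{\ell-3}{2}\rfloor^2+2$ enters. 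Your plan needs an analogue of this step; ``convexity'' alone will not deliver it.
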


\subsection{Planar  Tur\'{a}n numbers on cycles} \label{sub1.2}

We also study another extension of the classical Tur\'{a}n number, i.e., the planar Tur\'{a}n number. Dowden \cite{DZ}  initiated the following problem: what is the maximum number of edges
in an $n$-vertex $\mathcal{F}$-free planar graph?
This extremal number is called planar Tur\'{a}n number of $\mathcal{F}$
and denoted by ${\rm ex}_{\mathcal{P}}(n,\mathcal{F})$. The planar Tur\'{a}n number for short cycles are studied
in \cite{CL2022,DZ,Du,DG,Gyori,LS,Shi,Shi-1}, but ${\rm ex}_{\mathcal{P}}(n,C_{\ell})$ is still open for general $\ell$.
For more results on planar Tur\'{a}n-type problem, we refer the readers to a survey
of Lan, Shi and Song \cite{B20}.
It is easy to see that ${\rm ex}_{\mathcal{P}}(n,t\mathcal{C})=n-1$ for $t=1$.
Lan, Shi and Song \cite{D} showed that ${\rm ex}_{\mathcal{P}}(n,t\mathcal{C})=3n-6$ for $t\geq 3$,
and the double wheel $2K_1+C_{n-2}$ is an extremal graph.
We prove the case of $t=2$, which will be used to prove our main theorems.

\begin{thm}\label{theorem1.4}
For $n\geq 5$, we have ${\rm ex}_{\mathcal{P}}(n,2\mathcal{C})=2n-1$.
The extremal graphs are obtained from $2K_1+C_3$ and an independent set of size $n-5$ by
joining each vertex of the independent set to any two vertices of the triangle.
\end{thm}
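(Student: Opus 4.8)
The plan is to establish both the upper bound ${\rm ex}_{\mathcal{P}}(n,2\mathcal{C})\leq 2n-1$ and the matching lower bound by exhibiting the extremal family, and then to verify uniqueness. The lower bound is immediate: the described construction takes $2K_1+C_3$ (which has $5$ vertices and $6+3=9$ edges) together with an independent set of size $n-5$, each of whose vertices is joined to exactly two vertices of the triangle $C_3$. This graph is planar (one can draw the two apex vertices inside and outside the triangle and route the degree-two vertices appropriately), and it contains no two vertex-disjoint cycles because every cycle must pass through at least one vertex of the triangle $\{v_1,v_2,v_3\}$ or through one of the two apexes; a short case analysis shows the apexes and triangle cannot be split into two disjoint cyclic pieces. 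Counting edges gives $9+2(n-5)=2n-1$, matching the claimed value.

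The main work is the upper bound. First I would invoke the classical Erd\H{o}s--P\'osa-type philosophy specialized to $t=2$: a $2\mathcal{C}$-free graph has the property that deleting a bounded number of vertices destroys all cycles. Concretely, I would argue that in a $2\mathcal{C}$-free planar graph $G$ there is a small set $S$ of vertices (one expects $|S|\leq 3$) whose removal leaves a forest. To see why, note that if $G-v$ still contained two disjoint cycles for every single vertex $v$, one could extract two disjoint cycles in $G$ itself; more carefully, if $G$ contains a cycle $C$, then $G-V(C)$ must be a forest (else $C$ together with a cycle in $G-V(C)$ gives $2\mathcal{C}$). The structural heart is therefore to bound the edges using this ``one cycle covers everything'' phenomenon together with planarity.

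The cleanest route is to fix a shortest cycle $C$ in $G$ (if $G$ is a forest then $e(G)\leq n-1<2n-1$ and we are done), set $F=G-V(C)$, and observe that $F$ is a forest, so $e(F)\leq |V(F)|-1=n-|V(C)|-1$. It remains to bound the edges incident to $V(C)$, namely $e(C)$ plus the edges between $V(C)$ and $V(F)$ plus edges inside the cyclic part. The key constraint is that $C$ is a shortest cycle and $G$ is planar, so no vertex of $F$ can have too many neighbors on $C$ without creating a disjoint cycle or violating planarity. I expect the critical sub-case to be when $|V(C)|=3$, i.e. $C$ is a triangle, since then the triangle can absorb many degree-two neighbors (this is exactly the extremal configuration $2K_1+C_3$ plus pendant-like attachments). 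The hard part will be showing that the total edge count across all cases is at most $2n-1$ with equality forcing precisely the described structure; this requires carefully bounding, via planarity (Euler's formula applied to the bipartite-like incidence between $F$ and $C$, giving at most $2(n-|V(C)|)$ edges of a quadrilateral-free bipartite planar subgraph) and via the $2\mathcal{C}$-freeness (which forbids two independent cycles through different vertices of $C$), the contributions of longer cycles $C$, and then optimizing over $|V(C)|$.

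Finally, for uniqueness I would trace the equality conditions backward through the bound: equality in $e(F)\leq |V(F)|-1$ forces $F$ to be a tree, but combined with $2\mathcal{C}$-freeness and the edge-optimization it forces $F$ to in fact be an independent set (a nontrivial tree edge plus the cycle $C$ would tend to create a second disjoint cycle once the attachment pattern is saturated), and the planarity bound forces each such vertex to attach to exactly two vertices of a triangle $C$, with two further vertices playing the role of the apexes in $2K_1+C_3$. The main obstacle I anticipate is not any single inequality but rather the bookkeeping needed to rule out ``mixed'' near-extremal configurations (longer base cycles, or trees with a few edges attached) and to show they all fall strictly below $2n-1$, thereby pinning down the extremal graphs exactly.
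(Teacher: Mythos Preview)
Your shortest-cycle decomposition is a natural first idea, but the specific bound you lean on is false. You claim the bipartite subgraph between $V(C)$ and the forest $F=G-V(C)$ is quadrilateral-free and therefore has at most $2(n-|V(C)|)$ edges. Take the extremal graph itself with $C=c_1c_2c_3$: the two apex vertices of $2K_1+C_3$ are each adjacent to all of $c_1,c_2,c_3$, so the bipartite graph contains a $K_{2,3}$ and hence many $4$-cycles. In fact $e(V(C),V(F))=2\cdot 3+(n-5)\cdot 2=2n-4$, which exceeds your claimed bound $2(n-3)=2n-6$. Even if you replace your bound by the correct planar bipartite estimate $e(V(C),V(F))\le 2n-4$, the decomposition only yields $e(G)\le 3+(n-4)+(2n-4)=3n-5$, far from $2n-1$; the slack $e(F)\le n-4$ for a forest is enormous, and nothing in your outline explains why $e(F)$ must be zero (which it is in the extremal graph). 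Closing that gap would require a genuinely new structural argument tying edges inside $F$ to a loss of cross-edges, which you have not supplied.

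The paper proceeds along a completely different line. It first proves a structural lemma (Lemma~4.3): a plane graph with $\delta(G)\ge 3$ contains two vertex-disjoint cycles unless $G\in\{2K_1+C_3,\,K_1+C_{|G|-1}\}$. It then passes to the $3$-core $\widetilde{G}$ of the extremal graph $G$ (iteratively delete vertices of degree $\le 2$). If $\widetilde{G}$ is empty one gets $e(G)\le 2n-2$ directly; if $\widetilde{G}\cong K_1+C_{|\widetilde{G}|-1}$ one again gets $e(G)\le 2n-2$; and if $\widetilde{G}\cong 2K_1+C_3$ one gets $e(G)\le 9+2(n-5)=2n-1$, with equality forcing $G$ to arise from $2K_1+C_3$ by adding degree-$2$ vertices. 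A second lemma (Lemma~4.4) then pins down which such additions keep the graph $2\mathcal{C}$-free, yielding exactly the described family. The $3$-core reduction is what makes the count tight; your shortest-cycle approach does not capture this.
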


Moreover, Lan, Shi and Song \cite{D} also proved that ${\rm ex}_{\mathcal{P}}(n,tC_{\ell})=3n-6$ for all $t,\ell\geq 3$.
They \cite{Lan3} further showed that ${\rm ex}_{\mathcal{P}}(n,2C_3)=\lceil\frac{5n}{2}\rceil-5$
and obtained lower bounds of ${\rm ex}_{\mathcal{P}}(n,2C_{\ell})$ for $\ell\geq 4$,
which was improved by Li \cite{Li} for sufficiently large $n$ recently.
Li \cite{Li} also raised the following conjecture.

\begin{conj}
If $n\geq 23$, then ${\rm ex}_{\mathcal{P}}(n,2C_4)\leq \frac{19}{7}(n-2)$ and the bound is tight for $14\mid (n-2)$.
\end{conj}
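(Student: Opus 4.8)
The plan is to pin down ${\rm ex}_{\mathcal{P}}(n,2C_4)$ exactly and show it equals $\frac{19}{7}(n-2)$ when $14\mid(n-2)$, which simultaneously confirms Li's upper bound and its tightness. I would split the work into a matching construction (lower bound) and a global counting argument (upper bound), using as a black box Dowden's bound ${\rm ex}_{\mathcal{P}}(n,C_4)\le\frac{15}{7}(n-2)$ for $C_4$-free planar graphs.

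For the lower bound, the governing idea is that a planar graph is $2C_4$-free as soon as every $4$-cycle runs through one common vertex. I would therefore take a vertex $v$ of degree about $\frac{4}{7}(n-2)$ and attach it to one face of a $C_4$-free planar graph $G_0$ on the remaining $n-1$ vertices that is itself extremal for Dowden's bound, i.e.\ carries about $\frac{15}{7}(n-3)$ edges. Since $G_0$ is $C_4$-free, every $4$-cycle of the whole graph must use $v$, so no two $4$-cycles can be disjoint; counting edges gives $\deg(v)+e(G_0)$ close to $\frac{4}{7}(n-2)+\frac{15}{7}(n-3)\approx\frac{19}{7}(n-2)$. I would tune $G_0$ and the embedding of $v$ so that all faces become triangles or quadrilaterals; the divisibility condition $14\mid(n-2)$ (equivalently, $\frac{19}{7}(n-2)$ being an even integer) is exactly what makes such a periodic gadget close up with no wasted edges, producing equality.

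For the upper bound, let $G$ be a $2C_4$-free planar graph with $n$ vertices and $m$ edges, fixed with a plane embedding. If $G$ is $C_4$-free we are done by Dowden, so assume a $4$-cycle exists. Euler's formula together with the fact that every non-triangular face has length at least $4$ yields $m\le 2n-4+\tfrac12 f_3$, where $f_3$ is the number of triangular faces, so it suffices to prove $f_3\le\frac{10}{7}(n-2)$. The key structural input is that each $4$-cycle arises either as a quadrilateral face or from two triangular faces sharing an edge, and $2C_4$-freeness forces all of these $4$-cycles to pairwise intersect. I would first upgrade ``pairwise intersecting'' to ``covered by a bounded vertex set'': in the planar setting I expect all $4$-cycles to be pinned to a single high-degree vertex $v$ (or to $O(1)$ vertices), so that $G-v$ is $C_4$-free. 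Then $e(G-v)\le\frac{15}{7}(n-3)$, the triangular faces split into those incident to $v$ (bounded by $\deg v$) and those lying inside $G-v$ (bounded by the fact that no two triangular faces of a $C_4$-free plane graph share an edge), and a planarity accounting of how $v$ meets the boundary of the face it occupies controls $\deg v$; balancing these pieces is what extracts the exact constant.

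The main obstacle is precisely this structural reduction and its quantitative edge: passing from ``all $4$-cycles pairwise intersect'' to a genuinely bounded cover, and then carrying out the local analysis at the hub $v$ sharply enough to obtain $\frac{19}{7}$ rather than a weaker constant. A naive split---delete the four vertices of one $4$-cycle, apply Dowden to the remainder, and bound the interface edges separately---fails, because the remainder's maximal density and a large total degree on the deleted cycle cannot be attained at the same time; only a global application of Euler's formula to all of $G$ reconciles them. I expect this discharging-style analysis, rather than the construction, to absorb most of the effort, and to be where the hypothesis $n\ge 2661$ enters, ensuring the remainder is large enough for Dowden's bound to bite and for the structural dichotomy to hold.
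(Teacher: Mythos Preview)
Your outline misidentifies where the constant $\tfrac{19}{7}$ comes from, and this causes both halves of the plan to miss.

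For the \emph{lower bound}: Dowden-extremal $C_4$-free planar graphs have only bounded face sizes (they are built from triangles and pentagons), so you cannot plant a vertex of degree $\approx\tfrac{4}{7}n$ inside one face while keeping $\tfrac{15}{7}(n-3)$ edges. The paper's construction is different: it takes a $C_4$-free \emph{outerplanar} graph $G^*$ on $n-1$ vertices with $e(G^*)={\rm ex}_{\mathcal{OP}}(n-1,C_4)\approx\tfrac{12}{7}(n-1)$ (quoting an exact outerplanar result, not Dowden) and joins a \emph{dominating} vertex to obtain $K_1+G^*$. The arithmetic is $\tfrac{12}{7}(n-1)+(n-1)=\tfrac{19}{7}(n-1)$, not $\tfrac{15}{7}+\tfrac{4}{7}$.

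For the \emph{upper bound}: after reducing to ``every $4$-cycle contains a fixed vertex $w$'' (the paper does this via ${\rm ex}_{\mathcal{P}}(n,K_1+P_4)\le\tfrac{8}{3}(n-2)$, finding fourteen edge-disjoint copies of the fan $K_1+P_4$ and a pigeonhole argument on their common $2^+$-vertices---not via Dowden), applying Dowden's bound to $G-w$ would give only $e(G)\le\tfrac{15}{7}(n-3)+\deg(w)$, which is too weak once $\deg(w)$ is large; and in the extremal examples $\deg(w)=n-1$, so your hoped-for control on $\deg(w)$ cannot exist. The paper instead splits into two regimes. If $\deg(w)\le n-2$, a face-counting argument tracking $e_{3,3}$ (edges lying between two triangular faces, each of which forces a $C_4$ through $w$, hence $e_{3,3}\le 2\deg(w)-2f_4$) combines with Euler to give $e(G)\le\lfloor\tfrac{19}{7}(n-2)\rfloor$ directly. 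If $\deg(w)=n-1$, then $w$ is dominating, so $G-w$ is \emph{outerplanar} and $C_4$-free, and the exact outerplanar Tur\'an number closes the gap. In short, Dowden's planar bound is the wrong black box throughout; the governing quantity on both the construction and the upper bound is ${\rm ex}_{\mathcal{OP}}(n-1,C_4)$. (Also, your claim that every $4$-cycle is a quadrilateral face or the union of two adjacent triangular faces is false for non-facial $4$-cycles; the paper only uses the converse direction.)
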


In this paper, we determine the exact value of ${\rm ex}_{\mathcal{P}}(n,2C_4)$ for large $n$,
which solve the conjecture for $n\geq 2661$.

\begin{thm}\label{theorem1.5}
For $n\geq 2661$,
    $${\rm ex}_{\mathcal{P}}(n,2C_4)=\left\{
                          \begin{array}{ll}
                          \frac{19n}{7}-6 & \hbox{if $7\mid n$,} \\
                         \big\lfloor\frac{19n-34}{7}\big\rfloor~~~~~~~ & \hbox{otherwise.}
                          \end{array}
                          \right.
$$

\end{thm}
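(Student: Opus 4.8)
The plan is to prove the two matching bounds separately, organising everything around one structural principle: if a planar graph $G$ has a vertex $x$ for which $G-x$ is $C_4$-free, then every $4$-cycle of $G$ runs through $x$, so no two of them are vertex-disjoint and $G$ is automatically $2C_4$-free. I would first record that in this situation $e(G)=e(G-x)+\deg_G(x)$, and then tie the whole problem to Dowden's planar $C_4$-free bound $\mathrm{ex}_{\mathcal P}(m,C_4)\le \frac{15(m-2)}{7}$, which is where the denominator $7$ enters.

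For the lower bound I would exhibit the graph $G=K_1+G_0$, where $G_0$ is an extremal $C_4$-free \emph{outerplanar} graph on $n-1$ vertices and the apex $x$ is placed in the outer face and joined to all of them. Placed around the outer polygon, the $C_4$-free outerplanar pattern alternates triangles and pentagons and carries $12$ edges per $7$ boundary vertices; adding the apex contributes one edge per boundary vertex, for a gadget of $7$ vertices and $19$ edges. This graph is planar (outerplanar plus a single apex) and $2C_4$-free (deleting $x$ leaves the $C_4$-free graph $G_0$). Counting edges and fitting the $n-1$ boundary vertices into the period-$7$ pattern gives $\frac{19n-34}{7}$; the residue of $n$ modulo $7$ forces the floor, and when $7\mid n$ the cyclic pattern cannot close up without repeating a forbidden configuration, costing exactly one further edge and yielding $\frac{19n}{7}-6$.

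For the upper bound I would take an extremal $G$, assume it is $2$-connected, and split into cases. If $G$ is $C_4$-free, Dowden's bound $\frac{15(n-2)}{7}$ already lies strictly below the target for all $n>3$. Otherwise, granting the apex structure below, delete a vertex $x$ with $G-x$ $C_4$-free: in the plane graph $G-x$ the neighbours of $x$ all lie on the single face created by the deletion, so $\deg_G(x)\le L$, where $L$ is the length of that face and $L\le n-1$. Running a Dowden-type Euler count on $G-x$ with this face singled out, and using that no edge lies in two triangular faces (else a $C_4$ appears), I would obtain $e(G-x)\le \frac{15(n-1)-15-3L}{7}$, whence $e(G)=e(G-x)+\deg_G(x)\le \frac{15(n-1)-15+4L}{7}\le \frac{19n-34}{7}$, with the integrality of the triangle/pentagon face counts delivering the floor and the $7\mid n$ refinement.

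The hard part is establishing the apex, i.e.\ that an extremal (hence dense) $2C_4$-free planar graph must have all its $4$-cycles passing through one common vertex. A priori the $4$-cycles need only pairwise intersect: they could all meet the four vertices $W=V(Q)$ of a single fixed $4$-cycle $Q$ without sharing one vertex, and this genuinely spread-out case is the obstacle. I would attack it through the transversal $W$: since $G-W$ is $C_4$-free, $e(G)\le e(G-W)+e_G(W,V\setminus W)+6$, and the task is to show that unless the $4$-cycles funnel through a single vertex, planarity (no $K_{3,3}$ among the vertices having two neighbours in $W$) together with $2C_4$-freeness caps $e_G(W,V\setminus W)$ strictly below what the apex construction attains. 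Quantifying how many vertices can have several neighbours in $W$ without creating a second vertex-disjoint $4$-cycle is where the bulk of the casework lies and where the explicit threshold $n\ge 2661$ is forced.
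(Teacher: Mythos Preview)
Your overall architecture matches the paper's: build the lower bound by placing an apex over an extremal $C_4$-free \emph{outerplanar} graph on $n-1$ vertices (this is exactly the paper's construction $K_1+G^*$, with the precise edge count supplied by the outerplanar Tur\'an number of $C_4$), and reduce the upper bound to an ``apex lemma'' --- all $4$-cycles share one vertex --- followed by an edge count. But both halves of your upper bound diverge from the paper, and each carries a real gap.

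\textbf{The apex lemma.} The paper does not go through a transversal $W=V(Q)$. It invokes a second planar Tur\'an bound, ${\rm ex}_{\mathcal P}(n,K_1+P_4)\le\tfrac{8}{3}(n-2)$. Since $\tfrac{19n}{7}-6$ exceeds $\tfrac{8}{3}(n-2)$ by more than $14\cdot 9$ once $n\ge 2661$ (this inequality is precisely the origin of the threshold), one can peel off fourteen copies $H_1,\dots,H_{14}$ of $K_1+P_4$ from $G^{**}$, pairwise edge-disjoint on their own vertex sets. A short sequence of claims about how these $H_i$ can overlap then forces all $C_4$'s through a single vertex $w$. Your transversal route only gives $e(G)\le{\rm ex}_{\mathcal P}(n-4,C_4)+e_G(W,V\setminus W)+6$, and you propose no mechanism to push $e_G(W,V\setminus W)$ below roughly $\tfrac{4n}{7}$; planarity alone allows $e_G(W,V\setminus W)$ close to $2n$, so something substantial is missing, and there is no reason the constant $2661$ would emerge from your approach.

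\textbf{The count after the apex.} Your plan is to delete $x$, note that its neighbours lie on the face of $G-x$ of length $L$, derive $e(G-x)\le\tfrac{15(n-1)-15-3L}{7}$, and then use $\deg_G(x)\le L\le n-1$. The step $L\le n-1$ fails whenever $G-x$ is not $2$-connected (which can happen even for $2$-connected $G$): the merged face boundary is then a closed walk, not a cycle, and its length can exceed $n-1$. The paper sidesteps this by face-counting directly in $G^{**}$, never deleting $w$: an edge lying in two $3$-faces spans a $K_1+P_3$ and hence a $C_4$, which must contain $w$, so every edge of $E_{3,3}(G^{**})$ is of the form $ww_i$ or $w_iw_{i+1}$ for consecutive neighbours of $w$. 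Thus $e_{3,3}(G^{**})\le 2\deg(w)$, and the standard $f_3,f_4$ Euler count yields $e(G^{**})\le\tfrac{15(n-2)+4\deg(w)}{7}$. This settles $\deg(w)\le n-2$ directly; when $\deg(w)=n-1$ the graph $G^{**}-w$ is genuinely outerplanar and one falls back to the outerplanar $C_4$ bound, exactly as you suggest.
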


\section{Proof of Theorem \ref{theorem1.1}} \label{section2}
Let $A(G)$ be the adjacency matrix of a planar graph $G$,
and $\rho(G)$ be its spectral radius, i.e.,
the maximum modulus of eigenvalues of $A(G)$.
Throughout Sections \ref{section2} and \ref{section3}, let $G$ be an extremal graph to ${\rm spex}_{\mathcal{P}}(n,F)$, and $\rho$ denote this spectral radius.
By Perron-Frobenius theorem, there exists a positive eigenvector $X=(x_1,\ldots,x_n)^{\mathrm{T}}$ corresponding to $\rho$.
Choose $u'\in V(G)$ with $x_{u'}=\max\{x_i~|~i=1,2,\dots,n\}=1$.
For a vertex $u$ and a positive integer $i$,
let $N_i(u)$ denote the set of vertices at distance $i$ from $u$ in $G$.
For two disjoint subset $S,T\subset V(G)$, denote by $G[S,T]$ the bipartite subgraph of $G$
with vertex set $S\cup T$ that consist of all edges with one endpoint in $S$ and the other endpoint in $T$.
Set $e(S)=|E(G[S])|$ and $e(S,T)=|E(G[S,T])|$.
Since $G$ is a planar graph, we have
\begin{align}\label{align.-06}
e(S)\leq 3|S|-6~~~\text{and}~~~e(S,T)\leq 2(|S|+|T|)-4.
\end{align}

Now, we are ready to give the proof of Theorem \ref{theorem1.1}.

\begin{proof}
We give the proof in a sequence of claims.
Firstly, we give the lower bound of $\rho$.
\begin{claim}\label{Claim2.1}
For $n\geq 2.16\times 10^{17}$, we have $\rho\geq\sqrt{2n-4}.$
\end{claim}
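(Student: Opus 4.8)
The plan is to exhibit an explicit $F$-free planar graph whose spectral radius is at least $\sqrt{2n-4}$; since $G$ is the extremal graph maximizing the spectral radius over all $n$-vertex $F$-free planar graphs, we will then have $\rho = \rho(G) \geq \rho(\text{test graph}) \geq \sqrt{2n-4}$. The natural candidate is $K_{2,n-2}$. First I would verify that $K_{2,n-2}$ is admissible: it is clearly planar, and by the hypothesis of Theorem~\ref{theorem1.1}, $F$ is \emph{not} a subgraph of $K_{2,n-2}$, so $K_{2,n-2}$ is $F$-free. Hence $K_{2,n-2}$ is a legitimate competitor in the optimization defining ${\rm spex}_{\mathcal{P}}(n,F)$.

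Second, I would compute (or lower-bound) the spectral radius of $K_{2,n-2}$. For the complete bipartite graph $K_{a,b}$ the largest eigenvalue is exactly $\sqrt{ab}$, so $\rho(K_{2,n-2}) = \sqrt{2(n-2)} = \sqrt{2n-4}$. This can be seen directly from the quotient matrix with respect to the partition into the two sides of sizes $2$ and $n-2$, namely
$$
B=\begin{pmatrix} 0 & n-2 \\ 2 & 0 \end{pmatrix},
$$
whose largest eigenvalue is $\sqrt{2(n-2)}$; since this partition is equitable, $\rho(K_{2,n-2})$ equals the Perron root of $B$. Combining the two steps gives $\rho \geq \rho(K_{2,n-2}) = \sqrt{2n-4}$, as required.

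The argument is essentially a one-line comparison, so there is no serious obstacle; the only point needing care is confirming that $K_{2,n-2}$ qualifies as $F$-free, which is guaranteed precisely by the standing assumption in Theorem~\ref{theorem1.1} that $F$ is not a subgraph of $K_{2,n-2}$. (The lower bound $n \geq 2.16 \times 10^{17}$ is not actually needed for this particular claim; it is recorded in the statement only to keep a uniform hypothesis across the sequence of claims that follow.) I would therefore present the proof compactly: state that $K_{2,n-2}$ is an $F$-free planar graph, invoke $\rho(K_{2,n-2}) = \sqrt{2n-4}$, and conclude by extremality of $G$.
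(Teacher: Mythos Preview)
Your proof is correct and follows exactly the paper's approach: the paper also observes that $K_{2,n-2}$ is a planar $F$-free graph (by the hypothesis of Theorem~\ref{theorem1.1}) and concludes $\rho\geq\rho(K_{2,n-2})=\sqrt{2n-4}$ by extremality of $G$. Your additional justification via the equitable quotient matrix is a harmless elaboration of the same one-line argument.
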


\begin{proof}
Note that $K_{2,n-2}$ is planar and $F$-free.
Then, $\rho\geq \rho(K_{2,n-2})=\sqrt{2n-4}$, as $G$ is an extremal graph to ${\rm spex}_{\mathcal{P}}(n,F)$.
\end{proof}

\begin{claim}\label{claim2.2}
Set $L^{\lambda}=\{u\in V(G)~|~x_u\geq \frac{1}{10^3\lambda}\}$ for some constant $\lambda\geq \frac{1}{10^3}$.
Then $|L^{\lambda}|\le \frac{\lambda n}{10^5}$.
\end{claim}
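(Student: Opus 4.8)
The plan is to show that every vertex in $L^{\lambda}$ is forced to have large degree, and then to use planarity to cap how many such high-degree vertices can coexist. First I would apply the eigenvalue equation at an arbitrary vertex $u$: since $\rho x_u = \sum_{v\sim u} x_v$ and every entry satisfies $x_v \leq x_{u'} = 1$, the right-hand side is bounded by $d(u)$, giving the uniform estimate
$$d(u) \geq \rho x_u \quad \text{for all } u \in V(G).$$
Restricting to $u \in L^{\lambda}$ and combining $x_u \geq \tfrac{1}{10^3\lambda}$ with the lower bound $\rho \geq \sqrt{2n-4}$ from Claim \ref{Claim2.1}, I obtain $d(u) \geq \tfrac{\sqrt{2n-4}}{10^3\lambda}$ for every $u \in L^{\lambda}$.

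Next I would sum this degree bound over $L^{\lambda}$ and compare it against the total degree of the graph. On one hand, $\sum_{u\in L^{\lambda}} d(u) \geq |L^{\lambda}|\cdot \tfrac{\sqrt{2n-4}}{10^3\lambda}$; on the other hand, since $G$ is planar, $\sum_{u\in L^{\lambda}} d(u) \leq \sum_{u\in V(G)} d(u) = 2e(G) \leq 2(3n-6)$ by \eqref{align.-06}. Rearranging these two inequalities (using $\lambda > 0$) yields
$$|L^{\lambda}| \leq \frac{(6n-12)\cdot 10^3\lambda}{\sqrt{2n-4}}.$$

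Finally I would check that this right-hand side is at most $\tfrac{\lambda n}{10^5}$ once $n \geq 2.16\times 10^{17}$. After cancelling the common factor $\lambda$, the desired inequality reduces to $6(n-2)\cdot 10^8 \leq n\sqrt{2n-4}$, which to leading order asks for $\sqrt{n} \gtrsim \tfrac{6}{\sqrt{2}}\cdot 10^8 \approx 4.24\times 10^8$, i.e. $n \gtrsim 1.8\times 10^{17}$; the stated threshold leaves comfortable slack. The argument is in essence a degree-counting estimate, so there is no serious conceptual obstacle. The only point requiring genuine care is the bookkeeping of the numerical constants at the last step, to confirm that the hypothesis $n \geq 2.16\times 10^{17}$ is indeed large enough for the final inequality to hold with room to spare.
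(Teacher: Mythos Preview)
Your proposal is correct and follows essentially the same approach as the paper: both use the eigenvalue equation $\rho x_u \leq d_G(u)$ together with $\rho \geq \sqrt{2n-4}$ to force each vertex of $L^{\lambda}$ to have degree at least $\tfrac{\sqrt{2n-4}}{10^3\lambda}$, then sum degrees and invoke the planar edge bound $2e(G)\leq 6n-12$ to cap $|L^{\lambda}|$. The only cosmetic difference is that the paper simplifies the intermediate expression to $3\times 10^3\lambda\sqrt{2n-4}$ before comparing against $\tfrac{\lambda n}{10^5}$, while you leave it in the equivalent form $\tfrac{(6n-12)\cdot 10^3\lambda}{\sqrt{2n-4}}$.
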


\begin{proof}
By Claim \ref{Claim2.1}, $\rho\geq\sqrt{2n-4}$.
Hence,
$$\frac{\sqrt{2n-4}}{10^3\lambda}\leq\rho x_u=\sum_{v\in N_G(u)}x_v\le d_G(u)$$
for each $u\in L^{\lambda}$.
Summing this inequality over all vertices $u\in L^{\lambda}$, we obtain
\begin{align*}
|L^{\lambda}|\frac{\sqrt{2n-4}}{10^3\lambda}\le \sum_{u\in L^{\lambda}}d_G(u)
\le \sum_{u\in V(G)}d_G(u)
\le 2(3n-6).
\end{align*}
It follows that
$|L^{\lambda}|\le  3\times 10^3\lambda\sqrt{2n-4}\le \frac{\lambda n}{10^5}$ as $n\geq 2.16\times 10^{17}$.
\end{proof}

\begin{claim}\label{claim2.3}
We have $|L^1|\leq 6\times 10^4$.
\end{claim}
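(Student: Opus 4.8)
The plan is to set $S=L^1$ and exploit that, by definition, every $u\in S$ has $x_u\ge 10^{-3}$, while $\rho\ge\sqrt{2n-4}$ by Claim~\ref{Claim2.1}. Summing the eigenvalue equation $\rho x_u=\sum_{v\in N(u)}x_v$ over $u\in S$ and reversing the order of summation gives
\[
\frac{\sqrt{2n-4}}{10^3}\,|S|\;\le\;\rho\sum_{u\in S}x_u\;=\;\sum_{v\in V(G)}|N(v)\cap S|\,x_v .
\]
Hence it suffices to bound the right-hand side by a quantity of the form $O(\sqrt{n})+o(\sqrt{n})\,|S|$; solving the resulting inequality will then force $|S|$ to be at most an absolute constant, since the left-hand coefficient grows like $\sqrt{n}$.

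To estimate $\sum_{v}|N(v)\cap S|\,x_v$ I would split $V(G)$ into three parts. For $v\in S$ I use $x_v\le 1$ and planarity to get $\sum_{v\in S}|N(v)\cap S|\,x_v\le 2e(S)\le 6|S|-12$ from (\ref{align.-06}). Let $T=\{v\in V(G)\setminus S:\,|N(v)\cap S|\ge 3\}$. Since $G[S,T]$ is planar bipartite, (\ref{align.-06}) yields $3|T|\le e(S,T)\le 2(|S|+|T|)-4$, whence $|T|\le 2|S|-4$ and $e(S,T)\le 6|S|-12$; as $x_v<10^{-3}$ off $S$, this part contributes at most $10^{-3}(6|S|-12)$. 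Finally, every remaining vertex $v$ satisfies $|N(v)\cap S|\le 2$, so the last part is at most $2\sum_{v}x_v$.

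It then remains to show $\sum_v x_v=O(\sqrt n)$. For this I use $\rho\sum_v x_v=\sum_v d(v)x_v$, split again at $S$, and apply (\ref{align.-06}): namely $\sum_{v\in S}d(v)x_v\le 2e(S)+e(S,\overline{S})\le 6|S|+2n$, while $\sum_{v\notin S}d(v)x_v<10^{-3}\cdot 2e(G)\le 6\times10^{-3}n$. Using $|S|\le n/10^5$ from Claim~\ref{claim2.2}, this gives $\sum_v x_v\le 2.01\,n/\sqrt{2n-4}=O(\sqrt n)$. Substituting the three estimates into the displayed inequality produces
\[
\frac{\sqrt{2n-4}}{10^3}\,|S|\;\le\;6|S|+\frac{6|S|-12}{10^3}+\frac{4.02\,n}{\sqrt{2n-4}},
\]
and since $n\ge 2.16\times10^{17}$ the coefficient of $|S|$ on the left dominates the terms $6|S|$ and $10^{-3}(6|S|-12)$; solving for $|S|$ then yields $|S|\le 6\times10^4$.

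The main obstacle is that the crude estimate $e(S,\overline{S})\le 2n-4$ alone only delivers $|S|=O(\sqrt n)$, which is far from constant. The decisive step is therefore to separate the at most $2|S|-4$ vertices outside $S$ having three or more neighbours in $S$ (handled by the planar bipartite bound together with the smallness of $x_v$ there) from the bulk of low-weight vertices, whose total contribution is governed by $\sum_v x_v=O(\sqrt n)$ rather than by the edge count $2n-4$. Replacing a term linear in $n$ by one of order $\sqrt n$ is exactly what upgrades the bound from $O(\sqrt n)$ to an absolute constant.
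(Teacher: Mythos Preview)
Your argument is correct and, in fact, yields a tighter bound than the stated $6\times 10^4$; carrying the numerics through gives $|L^1|$ of order $2\times 10^3$. The decomposition of $\sum_{v}|N(v)\cap S|x_v$ into the $S$-part, the $T$-part with $|N(v)\cap S|\ge 3$, and the remainder is clean, and the auxiliary bound $\sum_v x_v\le 2.01\,n/\sqrt{2n-4}$ is both easy and sufficient.

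The paper proceeds differently: rather than summing the first-order equation over $S$, it expands $\rho^2 x_u$ for an individual vertex $u$ (see inequality~\eqref{align.-14}) and uses this, with the auxiliary scale $\lambda=10$, to show that every $u\in L^1$ satisfies $d_G(u)\ge n/10^4$. Summing these degree bounds then gives $|L^1|\cdot n/10^4\le 2e(G)\le 6n$, whence $|L^1|\le 6\times 10^4$. The paper's route thus passes through the qualitative statement ``all heavy vertices have linear degree'', which is conceptually in line with the later Claim~\ref{claim2.4}, whereas your argument is purely aggregate and avoids the second-order expansion and the extra parameter $\lambda$. Your key idea---isolating the few off-$S$ vertices with three or more $S$-neighbours via the bipartite planar bound, and controlling the rest through $\sum_v x_v=O(\sqrt n)$---is a genuine alternative and arguably more elementary here. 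One minor remark: in the final displayed inequality you wrote $6|S|$ for the first term rather than $6|S|-12$; either way the conclusion holds.
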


\begin{proof}
Let $u\in V(G)$ be an arbitrary vertex.
For convenience,
we use $N_i$, $L_i^{\lambda}$ and $\overline{L_i^{\lambda}}$ instead of $N_i(u)$, $N_i(u)\cap L^{\lambda}$ and $N_i(u)\setminus L^{\lambda}$, respectively.
By Claim \ref{Claim2.1}, $\rho\geq\sqrt{2n-4}$. Then
\begin{eqnarray}\label{align.7}
(2n-4)x_{u}\le \rho^2x_{u}=d_G(u)x_u+\sum_{v\in N_1}\sum_{w\in N_1(v)\setminus\{u\}}x_w.
\end{eqnarray}

Note that $N_1(v)\setminus\{u\}\subseteq N_1\cup N_2=L_1^{\lambda}\cup L_2^{\lambda}\cup\overline{L_1^{\lambda}}\cup \overline{L_2^{\lambda}}$.
We can calculate
$\sum_{v\in N_1}\sum_{w\in N_1(v)\setminus\{u\}}x_w$
according to two cases $w\in L_1^{\lambda}\cup L_2^{\lambda}$ or $w\in \overline{L_1^{\lambda}}\cup \overline{L_2^{\lambda}}$.
We first consider the case $w\in L_1^{\lambda}\cup L_2^{\lambda}$.
Clearly, $N_1=L_1^{\lambda}\cup\overline{L_1^{\lambda}}$ and $x_w\le 1$ for $w\in L_1^{\lambda}\cup L_2^{\lambda}$. We can see that
\begin{eqnarray}\label{align.8}
\sum_{v\in N_1}\sum_{w\in (L_1^{\lambda}\cup L_2^{\lambda})}\!\!x_w
\leq
\big(2e(L_1^{\lambda})+e(L_1^{\lambda},L_2^{\lambda})\big)+\sum_{v\in\overline{L_1^{\lambda}}}\sum_{w\in (L_1^{\lambda}\cup L_2^{\lambda})}\!\!\!x_w.
\end{eqnarray}
By Claim \ref{claim2.2},
we have $|L^{\lambda}|\le \frac{\lambda n}{10^5}$.
Moreover, $L_1^{\lambda}\cup L_2^{\lambda}\subseteq L^{\lambda}$.
Then, by (\ref{align.-06}), we have
\begin{eqnarray}\label{align.9}
2e(L_1^{\lambda})+e(L_1^{\lambda},L_2^{\lambda})\leq
2(3|L_1^{\lambda}|-6)+(2(|L_1^{\lambda}|+|L_2^{\lambda}|)-4)
< 8|L^{\lambda}|\leq \frac{8\lambda n}{10^5}.
\end{eqnarray}

Next, we consider the remain case $w\in \overline{L_1^{\lambda}}\cup \overline{L_2^{\lambda}}$.
Clearly, $x_w\le \frac{1}{10^3\lambda}$ for $w\in\overline{L_1^{\lambda}}\cup\overline{L_2^{\lambda}}$.
Then
\begin{eqnarray}\label{align.10}
\sum_{v\in N_1}\sum_{w\in\overline{L_1^{\lambda}}\cup\overline{L_2^{\lambda}}}\!\!\!x_w \le
\Big(e(L_1^{\lambda},\overline{L_1^{\lambda}}\cup\overline{L_2^{\lambda}})
+2e(\overline{L_1^{\lambda}})+e(\overline{L_1^{\lambda}},\overline{L_2^{\lambda}})\Big)\frac{1}{10^3\lambda}
< \frac{6n}{10^3\lambda},
\end{eqnarray}
where $e(L_1^{\lambda},\overline{L_1^{\lambda}}\cup\overline{L_2^{\lambda}})
+2e(\overline{L_1^{\lambda}})+e(\overline{L_1^{\lambda}},\overline{L_2^{\lambda}})\le 2e(G)<6n$.

Combining \eqref{align.7}-\eqref{align.10}, we obtain
\begin{eqnarray}\label{align.-14}
 (2n-4)x_{u}< d_G(u)x_u+\sum_{v\in\overline{L_1^{\lambda}}}\sum_{w\in (L_1^{\lambda}\cup L_2^{\lambda})}\!\!\!x_w+\Big(\frac{8\lambda}{10}+\frac{60}{\lambda}\Big)\frac{n}{10^4}.
\end{eqnarray}

Now we prove that $d_G(u)\geq\frac{n}{10^4}$ for each $u\in L^1$.
Suppose to the contrary that there exists a vertex $\widetilde{u}\in L^1$ with $d_G(\widetilde{u})<\frac{n}{10^4}$.
Note that $x_{\widetilde{u}}\geq \frac{1}{10^3}$ as $\widetilde{u}\in L^1$.
Setting $u=\widetilde{u}$, $\lambda=10$ and combining \eqref{align.-14}, we have
\begin{eqnarray}\label{align.-15}
\frac{2n-4}{10^3}<d_G(\widetilde{u})x_{\widetilde{u}}+\sum_{v\in\overline{L_1^{10}}}\sum_{w\in (L_1^{10}\cup L_2^{10})}x_w+\frac{14n}{10^4}.
\end{eqnarray}
By (\ref{align.-06}), we have
\begin{eqnarray*}
e\big(\overline{L_1^{10}},L_1^{10}\cup L_2^{10}\big)
<2\big(|\overline{L_1^{10}}|
+|L_1^{10}\cup L_2^{10}|\big)\leq 2\big(|N_1|
+|L^{10}|\big)
\leq\frac{4n}{10^4},
\end{eqnarray*}
where $|N_1|=d_G(\widetilde{u})<\frac{n}{10^4}$
and $|L^{10}|\leq \frac{n}{10^4}$ by Claim \ref{claim2.2}.
Combining this with $d_G(\widetilde{u})<\frac{n}{10^4}$ gives
 $$d_G(\widetilde{u})x_{\widetilde{u}}+\sum_{v\in\overline{L_1^{10}}}\sum_{w\in (L_1^{10}\cup L_2^{10})}x_w+\frac{14n}{10^4}
 \leq d_G(\widetilde{u})+e\big(\overline{L_1^{10}},L_1^{10}\cup L_2^{10}\big)+\frac{14n}{10^4}\leq \frac{19n}{10^4},$$
which contradicts \eqref{align.-15}.
Therefore, $d_G(u)\geq\frac{n}{10^4}$ for each $u\in L^1$.
Summing this inequality over all vertices $u\in L^{1}$, we obtain
$$|L^1|\frac{n}{10^4}\le \sum_{u\in L^1}d_G(u)\le 2e(G)\leq  6n,$$
which yields that $|L^1|\leq 6\times 10^4$.
\end{proof}

For convenience,
we use $L$, $L_i$ and $\overline{L_i}$ instead of $L^{1}$, $N_i(u)\cap L^{1}$ and $N_i(u)\setminus L^{1}$,
respectively.
Now we give a lower bound for degrees of vertices in $L^{1}$.
\begin{claim}\label{claim2.4}
For every $u\in L$,
we have $d_G(u)\geq(x_u -\frac{4}{1000})n$.
\end{claim}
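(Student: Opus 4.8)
The plan is to bound $d_G(u)$ from below by exploiting the eigenvalue equation $\rho x_u = \sum_{v \in N_G(u)} x_v$ together with the accounting already developed in Claim~\ref{claim2.3}. The key observation is that $\rho^2 x_u = (2n-4)x_u + O(\cdot)$ forces $u$ to have a large neighborhood consisting of vertices whose weights, when summed, nearly reach $2n$. Since $x_v \le 1$ for all $v$, the sum $\sum_{v \in N_G(u)} x_v \le d_G(u)$ gives a crude bound, but to get the sharper $d_G(u) \ge (x_u - \frac{4}{1000})n$ I would instead work with the second-neighborhood inequality \eqref{align.-14} and separate the contribution of the few high-weight vertices (those in $L = L^1$) from the bulk of low-weight vertices.

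First I would start from \eqref{align.-14} with $\lambda = 1$, which reads
\begin{align*}
(2n-4)x_u < d_G(u)x_u + \sum_{v \in \overline{L_1}}\sum_{w \in L_1 \cup L_2} x_w + \Big(\tfrac{8}{10} + 60\Big)\tfrac{n}{10^4}.
\end{align*}
The middle double sum counts, with multiplicity, edges from $\overline{L_1}$ into $L_1 \cup L_2 \subseteq L$, each weighted by $x_w \le 1$; hence it is at most $e(\overline{L_1}, L_1 \cup L_2) \le 2(|\overline{L_1}| + |L|) - 4 < 2d_G(u) + 2|L|$ by the planar bipartite bound in \eqref{align.-06}, using $|\overline{L_1}| \le |N_1| = d_G(u)$. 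Since Claim~\ref{claim2.3} gives $|L| \le 6 \times 10^4$, this term is at most $2d_G(u) + 12 \times 10^4$, which is negligible compared to $n$ for $n \geq 2.16 \times 10^{17}$.

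Substituting these estimates into the displayed inequality and collecting the error terms, I would obtain
\begin{align*}
(2n-4)x_u < d_G(u)x_u + 2d_G(u) + \Big(12 \times 10^4 + \tfrac{608n}{10^4}\Big),
\end{align*}
so that $d_G(u)(x_u + 2) > (2n-4)x_u - \frac{608n}{10^4} - 12\times 10^4$. Dividing through and using $x_u \le 1$ together with $x_u \ge \frac{1}{10^3}$ (which holds since $u \in L$), the factor $x_u + 2$ can be absorbed and the lower-order terms bounded by a small multiple of $n$, yielding $d_G(u) \ge (x_u - \frac{4}{1000})n$ after arithmetic.

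The main obstacle will be bookkeeping the several error terms so that they all fit within the slack of $\frac{4}{1000}n$; in particular one must verify that dividing by $x_u + 2$ rather than $x_u$ does not spoil the bound, which is why the argument needs $x_u$ bounded away from $0$ (guaranteed by $u \in L$) and why the constant $\frac{4}{1000}$ is chosen with room to spare. The rest is routine estimation using the planarity inequalities \eqref{align.-06} and the cardinality bound from Claim~\ref{claim2.3}.
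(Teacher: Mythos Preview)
Your approach has a genuine gap that cannot be fixed by bookkeeping. After substituting your bipartite planarity bound $e(\overline{L_1}, L_1 \cup L_2) \le 2d_G(u) + 2|L|$ into \eqref{align.-14}, you arrive at
\[
d_G(u)(x_u + 2) > (2n-4)x_u - O\Big(\tfrac{n}{10^4}\Big).
\]
Dividing by $x_u + 2$ gives at best $d_G(u) \gtrsim \tfrac{2x_u}{x_u+2}\,n$. For $x_u = 1$ (which occurs at $u = u'$, and this is precisely the case needed downstream in Claim~\ref{claim2.5}) this yields only $d_G(u) \gtrsim \tfrac{2}{3}n$, far short of the required $(1 - \tfrac{4}{1000})n$. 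Your claim that ``the factor $x_u + 2$ can be absorbed'' is simply false: the loss of the factor $\tfrac{2}{x_u+2}$ is of constant order, not lower order, and no amount of error-term bookkeeping recovers it.

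The paper avoids this by proving the sharper estimate $e(\overline{L_1}, L_1 \cup L_2) \le d_G(u) + o(n)$, with coefficient $1$ rather than $2$ on $d_G(u)$. The idea is that since $G$ is $K_{3,3}$-free and $u \notin L_1 \cup L_2$, at most $|L_1 \cup L_2|^2 \le |L|^2$ vertices of $\overline{L_1} \subseteq N_1(u)$ can have two or more neighbours in $L_1 \cup L_2$; the remaining vertices contribute at most one edge each, giving $e(\overline{L_1}, L_1 \cup L_2) \le |\overline{L_1}| + |L|^3 \le d_G(u) + (6\times 10^4)^3 \le d_G(u) + \tfrac{n}{1000}$. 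Combined with $d_G(u)x_u \le d_G(u)$, this yields $2d_G(u) \ge (2n-4)x_u - O(\tfrac{n}{10^3})$, and the claim follows. The $K_{3,3}$-freeness argument is the missing ingredient in your plan.
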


\begin{proof}
Let $\overline{L_1}'$ be the subset of $\overline{L_1}$
in which each vertex has at least $2$ neighbors in $L_1\cup L_2$.
We first prove that $|\overline{L_1}'|\leq|L_1\cup L_2|^{2}$.
If $|L_1\cup L_2|=1$, then $\overline{L_1}'$ is empty, as desired.
It remains the case $|L_1\cup L_2|\geq 2$. Suppose to the contrary that $|\overline{L_1}'|> |L_1\cup L_2|^{2}$.
Since there are only $\binom{|L_1\cup L_2|}{2}$ options for vertices in $\overline{L_1}'$ to choose a set of $2$ neighbors from $L_1\cup L_2$,
we can find a set of $2$ vertices in $L_1\cup L_2$ with at least $\Big\lceil|\overline{L_1}'|/\binom{|L_1\cup L_2|}{2}\Big\rceil\geq3$ common neighbors in $\overline{L_1}'$.
Moreover, one can observe that $u\notin L_1\cup L_2$
and $\overline{L_1}'\subseteq\overline{L_1}\subseteq N_1(u)$.
Hence, $G$ contains a copy of $K_{3,3}$, contradicting that $G$ is planar.
Hence, $|\overline{L_1}'|\leq|L_1\cup L_2|^{2}$.
It follows that
\begin{eqnarray*}\label{align31}
e(\overline{L_1},L_1\cup L_2)\leq
|\overline{L_1}\setminus \overline{L_1}'|\time 2
+|L_1\cup L_2||\overline{L_1}'|
\leq d_G(u)+(6\times10^4)^{3}
\leq d_G(u)+\frac{n}{1000},
\end{eqnarray*}
where the second last inequality holds as $|\overline{L_1}'|\leq|L_1\cup L_2|^{2}$ and $|L_1\cup L_2|\leq |L|\leq 6\times 10^4$, and the last inequality holds as $n\geq 2.16\times 10^{17}$.
Setting $\lambda=1$ and combining the above inequality with \eqref{align.-14}, we have
\begin{eqnarray*}
 (2n-4)x_u\leq d_G(u)+\Big(d_G(u)+\frac{n}{10^3}\Big)+\frac{61n}{10^4},
\end{eqnarray*}
which yields that $d_G(u)\geq(n-2)x_u-\frac{71n}{2\times 10^4}\geq (x_u-\frac{4}{1000})n$.
\end{proof}

\begin{claim}\label{claim2.5}
There exists a vertex $u''\in L_1\cup L_2$ such that $x_{u''}\geq \frac{988}{1000}$.
\end{claim}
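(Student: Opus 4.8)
The plan is to work at the dominant vertex $u'$ (so that $x_{u'}=1$ and $L_i=N_i(u')\cap L$), and to show that almost all of the eigenvalue mass reaching $u'$ through its low-value neighbours must be funneled through a single high-value vertex at distance at most $2$, which will be the desired $u''$. Set $M:=\max_{w\in L_1\cup L_2}x_w$; the goal is to prove $M\ge\frac{988}{1000}$.

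First I would record that, applying Claim \ref{claim2.4} with $u=u'$, we have $d_G(u')\ge(1-\frac{4}{1000})n$. Writing the eigen-equation $\rho=\sum_{v\in N_1(u')}x_v$ and using $|L_1|\le|L|\le 6\times10^4$ together with $\rho\ge\sqrt{2n-4}$ (Claim \ref{Claim2.1}), the high-value neighbours contribute a negligible amount, so $\sum_{v\in\overline{L_1}}x_v\ge\rho-6\times10^4\ge(1-o(1))\rho$. Multiplying by $\rho$ and expanding $\rho x_v=\sum_{w\in N_1(v)}x_w$ gives $\rho\sum_{v\in\overline{L_1}}x_v=|\overline{L_1}|+\Sigma$, where $\Sigma:=\sum_{v\in\overline{L_1}}\sum_{w\in N_1(v)\setminus\{u'\}}x_w$ and the term $|\overline{L_1}|$ collects the walks returning to $u'$. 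Since the left-hand side is at least $(1-o(1))(2n-4)$ and $|\overline{L_1}|\le n$, this yields $\Sigma\ge(1-o(1))n$.

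Next I would split $\Sigma$ according to whether the endpoint $w$ lies in $L$ or not. If $w\notin L$ then $x_w<\frac{1}{10^3}$, and the number of such walks is at most $\sum_{v\in\overline{L_1}}d_G(v)\le 2e(G)<6n$, so this part contributes less than $6\times10^{-3}n$. If $w\in L$ then $w\in L_1\cup L_2$ and $x_w\le M$, and the aim is to bound this part by $Mn+O(1)$. Here I reuse the $K_{3,3}$-free counting from the proof of Claim \ref{claim2.4}: letting $\overline{L_1}'$ be the set of $v\in\overline{L_1}$ with at least two neighbours in $L_1\cup L_2$, planarity gives $|\overline{L_1}'|\le|L_1\cup L_2|^{2}\le(6\times10^4)^{2}$. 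Every $v\in\overline{L_1}\setminus\overline{L_1}'$ has at most one neighbour in $L_1\cup L_2$ and hence contributes at most $M$, while the vertices of $\overline{L_1}'$ contribute at most $|\overline{L_1}'|\cdot|L_1\cup L_2|\le(6\times10^4)^{3}$ in total. Combining the two parts, $(1-o(1))n\le\Sigma\le Mn+(6\times10^4)^{3}+6\times10^{-3}n$, and dividing by $n$ (using $n\ge 2.16\times10^{17}$ to absorb the additive constant into a term below $10^{-3}$) forces $M\ge\frac{988}{1000}$; any vertex attaining $M$ is the claimed $u''$.

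The main obstacle is precisely this second split: the crude bipartite planar bound $e(\overline{L_1},L_1\cup L_2)\le 2(|\overline{L_1}|+|L_1\cup L_2|)-4$ only supplies a factor of about $2n$ and would yield $M\gtrsim\frac12$, far short of $0.988$. What rescues the argument is that this factor-$2$ estimate is tight only when the low-value neighbours of $u'$ attach to two common high-value vertices, whereas the $K_{3,3}$-free structure shows that all but a bounded number of vertices of $\overline{L_1}$ attach to at most one vertex of $L_1\cup L_2$; this collapses the factor $2$ to $1+o(1)$ and concentrates essentially all of the mass $\Sigma$ on a single high-value vertex.
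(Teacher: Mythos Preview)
Your argument is correct, but it reaches the bound $e(\overline{L_1},L_1\cup L_2)\lesssim n$ by a different mechanism than the paper. You recycle the $K_{3,3}$-counting from Claim~\ref{claim2.4}: all but at most $(6\times10^4)^2$ vertices of $\overline{L_1}$ have at most one neighbour in $L_1\cup L_2$, so the $L$-part of $\Sigma$ is at most $M|\overline{L_1}|+(6\times10^4)^3\le Mn+10^{-3}n$. The paper instead observes that $u'$ itself lies in $L$, so the bipartite planar bound $e(\overline{L_1},L)\le 2n-4$ already includes the $\approx n$ edges from $\overline{L_1}$ back to $u'$; subtracting $d_{\overline{L_1}}(u')\ge\frac{995n}{1000}$ (via Claim~\ref{claim2.4}) yields $e(\overline{L_1},L_1\cup L_2)\le\frac{1005n}{1000}$ directly, and then an averaging $x_{u''}\ge\frac{993/1000}{1005/1000}$ finishes.

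Both routes collapse the naive factor $2$ to $1+o(1)$; the paper's is a one-line bookkeeping trick, while yours leans on the structural $K_{3,3}$ argument. Your route has the minor advantage that the recorded bound $d_G(u')\ge\frac{996}{1000}n$ from Claim~\ref{claim2.4} is never actually used (your lower bound on $\Sigma$ comes straight from $\rho\ge\sqrt{2n-4}$ and $|L_1|\le6\times10^4$), whereas the paper's subtraction step genuinely needs it. One small omission: to turn $\rho\sum_{v\in\overline{L_1}}x_v\ge\rho^2-6\times10^4\rho$ into $(1-o(1))(2n-4)$ you need an upper bound on $\rho$ (e.g.\ $\rho^2\le d_G(u')+2e(G)\le7n$), which you should state explicitly rather than hide in the $o(1)$.
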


\begin{proof}
Setting $u=u'$, $\lambda=1$ and combining \eqref{align.-14}, we have
\begin{eqnarray*}
    2n-4<d_G(u')+\sum_{v\in\overline{L_1}}\sum_{w\in L_1\cup L_2}\!\!\!x_w+\frac{61n}{10^4},
\end{eqnarray*}
which yields that
$$\sum_{v\in\overline{L_1}}\sum_{w\in L_1\cup L_2}x_w\geq 2n-4-\frac{61n}{10^4}-d_G(u')\geq\frac{993n}{1000}.$$
From Claim \ref{claim2.4} we have $d_G(u')\geq \frac{996n}{1000}$ as $u'\in L$.
For simplicity, set $N_{L_1}(u')=N_{G}(u')\cap L_1$ and $d_{L_1}(u')=|N_{L_1}(u')|$.
By Claim \ref{claim2.3}, $|L|\leq 6\times 10^4$.
Then, $d_{L_1}(u')\leq |L_1|\leq |L|\leq \frac{n}{1000}$ as $n\geq 2.16\times 10^{17}$.
It infers that $$d_{\overline{L_1}}(u')=d_G(u')-d_{L_1}(u')\geq d_G(u')-|L|\geq \frac{995n}{1000}.$$
Combining this with \eqref{align.-06} gives
$$e(\overline{L_1}, L_1\cup L_2)\le e(\overline{L_1},L)-d_{\overline{L_1}}(u')\leq (2n-4)-\frac{995n}{1000}\leq \frac{1005n}{1000}. $$
By averaging, there is a vertex $u''\in L_1\cup L_2$ such that
      $$x_{u''}\geq \frac{\sum_{v\in\overline{L_1}}\sum_{w\in (L_1\cup L_2)}x_w}{e(\overline{L_1}, L_1\cup L_2)}\geq \frac{\frac{993n}{1000}}{\frac{1005n}{1000}}
      \geq \frac{988}{1000},$$
as desired.
\end{proof}

Notice that $x_{u'}=1$ and $x_{u''}\geq \frac{988}{1000}$.
By Claim \ref{claim2.4}, we have
\begin{eqnarray}\label{align.a108}
 d_G(u')\geq \frac{996}{1000}n~~~\text{and}~~~d_G(u'')\geq \frac{984}{1000}n.
\end{eqnarray}
Now, let $D=\{u',u''\}$, $R=N_G(u')\cap N_G(u'')$,
and $R_1=V(G)\setminus (D\cup R)$.
Thus, $|R_1|\leq (n-d_G(u'))+(n-d_G(u''))\leq \frac{2n}{100}$.
Now, we prove that the eigenvector entries of vertices in $R\cup R_1$ are small.

\begin{claim}\label{claim2.6}
Let $u\in R\cup R_1$. Then $x_u\le \frac{1}{10}$.
\end{claim}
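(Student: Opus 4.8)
The plan is to prove the bound by contradiction, playing the degree lower bound of Claim~\ref{claim2.4} against an upper bound on $d_G(u)$ forced by planarity. Suppose some $u\in R\cup R_1$ had $x_u>\frac{1}{10}$. Since $\frac{1}{10}>\frac{1}{1000}$, such a vertex would lie in $L=L^{1}$, so Claim~\ref{claim2.4} would apply and give
$$d_G(u)\ge\Big(x_u-\tfrac{4}{1000}\Big)n>\Big(\tfrac{1}{10}-\tfrac{4}{1000}\Big)n=\tfrac{96}{1000}n.$$
Thus it suffices to show that every $u\in R\cup R_1$ has degree well below $\frac{96}{1000}n$, which will be the required contradiction.

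The key step, and the only place where planarity really enters, is the estimate $|N_G(u)\cap R|\le 2$. I would prove it using that every vertex of $R=N_G(u')\cap N_G(u'')$ is joined to both $u'$ and $u''$: if $u$ had three neighbours $z_1,z_2,z_3\in R$, then $u',u'',u$ together with $z_1,z_2,z_3$ would be six distinct vertices inducing a $K_{3,3}$ (each $z_i$ is adjacent to $u'$, to $u''$, and to $u$), contradicting that $G$ is planar. The required disjointness is automatic: $u\notin\{z_1,z_2,z_3\}$ because $z_i\in N_G(u)$, while $u',u''\notin R$ by definition of $D$ and $R$.

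Granting this, I would finish by splitting $N_G(u)$ according to the partition $V(G)=D\cup R\cup R_1$. Using $|N_G(u)\cap D|\le 2$, the bound $|N_G(u)\cap R|\le 2$ above, and $|N_G(u)\cap R_1|\le|R_1|\le\frac{2n}{100}$, one gets $d_G(u)\le 4+\frac{2n}{100}$, which is strictly smaller than $\frac{96}{1000}n$ for $n\ge 2.16\times 10^{17}$; this contradicts the lower bound and forces $x_u\le\frac{1}{10}$. I expect the only point demanding care to be the $K_{3,3}$ extraction, namely checking that the three chosen common neighbours and $u,u',u''$ are genuinely six distinct vertices carrying all nine edges; both facts are immediate from $u\in R\cup R_1$ and $z_i\in N_G(u)\cap R$, so the remainder is routine arithmetic with $|R_1|\le\frac{2n}{100}$ and Claim~\ref{claim2.4}.
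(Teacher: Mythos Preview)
Your argument is correct but follows a genuinely different line from the paper. The paper does not argue by contradiction through Claim~\ref{claim2.4}; instead it works directly with the eigenvector equation. It first bounds $\sum_{w\in R_1}x_w$ by noting (exactly as you do) that $d_R(v)\le 2$ for $v\in R_1$ via the $K_{3,3}$ obstruction, whence $d_G(v)\le 3+d_{R_1}(v)$, and summing gives $\rho\sum_{w\in R_1}x_w\le 9|R_1|\le\frac{18n}{100}$. Then for any $u\in R\cup R_1$ it writes $\rho x_u\le \sum_{v\in N_D(u)}x_v+\sum_{v\in N_R(u)}x_v+\sum_{v\in N_{R_1}(u)}x_v\le 4+\frac{18n}{100\rho}$ and divides by $\rho\ge\sqrt{2n-4}$.

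Your route is shorter because it recycles Claim~\ref{claim2.4} rather than re-running an eigenvector estimate, and the arithmetic $4+\tfrac{2n}{100}<\tfrac{96n}{1000}$ is cleaner than the paper's $\tfrac{4}{\rho}+\tfrac{18n}{100\rho^2}\le\tfrac{1}{10}$. The paper's route, on the other hand, extracts slightly more along the way (the total eigenweight on $R_1$), though this intermediate bound is not reused later. Both proofs rest on the identical planarity input $|N_G(u)\cap R|\le 2$ from the $K_{3,3}$ forced by $\{u,u',u''\}$ against three common neighbours in $R$; your verification that these six vertices are distinct is fine (minor quibble: ``containing a $K_{3,3}$'' is more accurate than ``inducing'').
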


\begin{proof}
For any vertex $u\in R_1$, we can see that
\begin{eqnarray}\label{align.a8}
 d_D(u)\leq 1~~~\text{and}~~~d_{R}(u)\leq 2.
\end{eqnarray}
In fact, if $d_{R}(u)\geq 3$, then $G[N_G(u)\cup D]$ contains a copy of $K_{3,3}$, contradicting that $G$ is planar.
By (\ref{align.a8}), $d_G(u)=d_D(u)+d_{R}(u)+d_{R_1}(u)\leq 3+d_{R_1}(u)$.
Note that $|R_1|\le \frac{2n}{100}$ and $e(R_1)\leq 3|R_1|$ by \eqref{align.-06}.
Thus,
  $$\rho\sum_{u\in R_1}x_u\le \sum_{u\in R_1}d_G(u)\le \sum_{u\in R_1}(3+d_{R_1}(u))\leq 3|R_1|+2e(R_1)\le 9|R_1|\le \frac{18n}{100},$$
which yields $\sum_{u\in R_1}x_u\le \frac{18n}{100\rho}$.
For any $u\in R\cup R_1$, $d_{R}(u)\leq 2$ as $G$ is $K_{3,3}$-free.
It follows that
        $$\rho x_u=\sum_{v\in N_G(u)}x_v\leq  \sum_{v\in N_D(u)}x_v+\sum_{v\in N_{R}(u)}x_v+\sum_{v\in  N_{R_1}(u)}x_v\le 4+\frac{18n}{100\rho}.$$
Dividing both sides by $\rho$, we get $x_u\le \frac{1}{10}$  as $\rho\geq \sqrt{2n-4}$.
\end{proof}

\begin{figure}[!ht]
	\centering
	\includegraphics[width=0.35\textwidth]{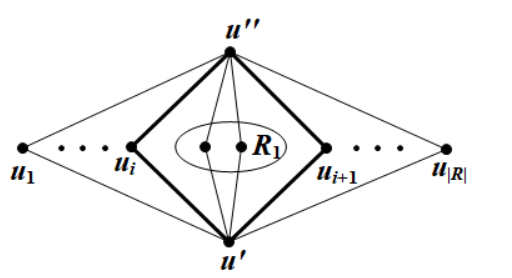}
	\caption{A local structure of $G'$. }{\label{fig.-12}}
\end{figure}

\begin{claim}\label{claim2.7}
$R_1$ is empty.
\end{claim}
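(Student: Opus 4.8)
The plan is to argue by contradiction: assuming $R_1\neq\emptyset$, I will locate a single vertex $w\in R_1$ whose re-wiring to the two dominant vertices $u',u''$ strictly increases the spectral radius while preserving planarity and $F$-freeness, contradicting the extremality of $G$. Concretely, I would form $G'$ from $G$ by deleting every edge incident with $w$ and then joining $w$ to both $u'$ and $u''$ (the local picture is Figure~\ref{fig.-12}). Using the Perron vector $X$ as a test vector, $\rho(G')\ge X^{\mathrm T}A(G')X/X^{\mathrm T}X=\rho+2x_w\big((x_{u'}+x_{u''})-\rho x_w\big)/X^{\mathrm T}X$, so the whole argument reduces to producing one $w\in R_1$ with $\rho x_w<x_{u'}+x_{u''}$, together with the two structural checks on $G'$.

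For the spectral inequality I would split $N_G(w)$ according to $D$, $R$ and $R_1$. Since $w\notin R$ we have $d_D(w)\le 1$, and since $G$ together with $u',u''$ is $K_{3,3}$-free we have $d_R(w)\le 2$; by Claim~\ref{claim2.6} each neighbour in $R$ carries weight at most $\tfrac1{10}$. Hence $\rho x_w=\sum_{v\in N_G(w)}x_v\le 1+\tfrac{2}{10}+\sum_{v\in N_{R_1}(w)}x_v$. The only dangerous term is the $R_1$-contribution, and here the key idea is averaging rather than a worst-case estimate: by \eqref{align.-06} and Claim~\ref{claim2.6}, $\sum_{w\in R_1}\sum_{v\in N_{R_1}(w)}x_v=\sum_{v\in R_1}x_v\,d_{R_1}(v)\le\tfrac1{10}\cdot 2e(R_1)\le\tfrac{6}{10}|R_1|$, so some $w\in R_1$ satisfies $\sum_{v\in N_{R_1}(w)}x_v\le\tfrac{6}{10}$. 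For this $w$ we get $\rho x_w\le 1+\tfrac{2}{10}+\tfrac{6}{10}=1.8$, whereas $x_{u'}+x_{u''}\ge 1+\tfrac{988}{1000}>1.98$ by Claim~\ref{claim2.5}; thus $\rho x_w<x_{u'}+x_{u''}$ with room to spare, and $\rho(G')>\rho$.

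It remains to verify that $G'$ is an admissible competitor. For $F$-freeness I would pull any copy of $F$ in $G'$ back to $G$: such a copy must use a new edge, hence the degree-two vertex $w$ (adjacent only to $u',u''$ in $G'$); since $|R|\ge n-|R_1|-2$ is far larger than $|V(F)|$, I can pick a common neighbour $r\in R$ avoided by the copy and substitute $r$ for $w$, and because $r$ is adjacent to both $u'$ and $u''$ this yields a copy of $F$ already present in $G$, a contradiction. For planarity I would use that deleting all edges at $w$ leaves $u'$ and $u''$ on a common face: they have many common neighbours, so a bigon-type face incident with both is available (cf. Figure~\ref{fig.-12}), and $w$ can be inserted there and joined to $u',u''$ without crossings.

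The main obstacle is the spectral step, precisely the $R_1$-contribution $\sum_{v\in N_{R_1}(w)}x_v$: a naive bound through $\sum_{u\in R_1}x_u\le \tfrac{18n}{100\rho}$ only yields something of order $\sqrt n$ and is useless, so the averaging trick that extracts a single well-behaved $w$ is what makes $\rho x_w<x_{u'}+x_{u''}$ achievable. A secondary point requiring care is the planarity of $G'$, i.e.\ guaranteeing a face of $G-w$ incident with both $u'$ and $u''$; I expect this to follow from the abundance of common neighbours of $u'$ and $u''$, and it is exactly where the explicit local structure of Figure~\ref{fig.-12} is needed.
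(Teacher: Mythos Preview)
Your spectral step (averaging over $R_1$ to extract a single $w$ with $\sum_{v\in N_{R_1}(w)}x_v\le\tfrac{6}{10}$, hence $\rho x_w\le 1.8<x_{u'}+x_{u''}$) and your $F$-freeness step (swap $w$ for an unused $r\in R$) are both correct and clean. The approach is genuinely different from the paper's: you rewire one vertex and use averaging, whereas the paper rewires \emph{all} of $R_1$ at once and controls the spectral change via the $5$-degeneracy ordering of the planar graph $G[R_1]$.

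The gap is exactly where you flag it: planarity of $G'$. You need $G-w$ to admit an embedding with $u'$ and $u''$ on a common face, and ``abundance of common neighbours'' does not suffice. Partition the plane by the $|R|$ internally disjoint paths $u'\!-\!r\!-\!u''$ ($r\in R$) into regions $S_1,\dots,S_{|R|}$. A region $S_i$ yields a face containing both $u',u''$ only when its interior has no vertex of $R_1\setminus\{w\}$ and the chord $r_ir_{i+1}$ is absent. One can check that $G[R]$ has maximum degree $\le 2$ and is acyclic (a spanning cycle on $R$ would give $2K_1+P_{|R|}\subseteq G$, hence $F\subseteq G$), so $e(G[R])\le |R|-1$; but this still leaves as few as \emph{one} chord-free region. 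If $|R_1|\ge 2$ the remaining vertices of $R_1\setminus\{w\}$ may occupy precisely that region. Concretely: take $G[R]$ a Hamiltonian path $r_1\cdots r_{|R|}$, $u'u''\notin E(G)$, and a second vertex $w'\in R_1$ with $N_G(w')=\{r_1,r_{|R|}\}$ sitting in the unique chord-free region $S_{|R|}$; then the path $r_{|R|}\!-\!w'\!-\!r_1$ separates $u'$ from $u''$ inside $S_{|R|}$, and no face of $G-w$ contains both. In this configuration your $G'$ is non-planar and the contradiction does not materialise.

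The paper avoids this entirely by rewiring all of $R_1$ simultaneously: its $G'$ is $G[D\cup R]$ together with each vertex of $R_1$ joined only to $u',u''$. Planarity now only requires a face of $G[D\cup R]$ incident with $u'$ and $u''$, and with no $R_1$-vertices present the region count above (at least one chord-free $S_i$) gives one immediately. The cost is that the spectral comparison must treat every $v_i\in R_1$; the $5$-degeneracy ordering gives $\sum_{u\in N_{D\cup R\cup R_i}(v_i)}x_u\le 1+\tfrac{2}{10}+\tfrac{5}{10}<x_{u'}+x_{u''}$ for each $i$, which is the all-at-once analogue of your averaging bound. So your single-vertex strategy trades an easier spectral step for a harder planarity step, and that second trade does not close as stated.
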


%Now, we are ready to complete the proof of Theorem \ref{theorem1.1}.
%
%\noindent{\bf Proof of Theorem \ref{theorem1.1}.}
%We first prove that $R_1$ is empty.
\begin{proof}
Suppose to the contrary that $a=|R_1|>0$.
Assume that $G^*$ is a planar embedding of $G[D\cup R]$, and  $u_1,\dots,u_{|R|}$ are
around $u'$ in clockwise order in $G^*$, with subscripts interpreted modulo $|R|$.
Recall that $|R_1|\leq \frac{2n}{100}$.
Then $|R|=n-2-|R_1|> \frac{97n}{100}>|R_1|$.
%Since $F$ is a subgraph of $2K_1+P_{n/2}$,
%we can see that $G[R]$ is $P_{n/2}$-free. Moreover, by \eqref{align.a108}, we have
%\begin{eqnarray}\label{align.32}
%   |R|=|N_G(u')\cap N_G(u'')|\geq |N_G(u')|+|N_G(u'')|-n\geq \frac{980}{1000}n>\frac{n}{2}+2\geq|V(F)|.
%\end{eqnarray}
By the pigeonhole principle, there exists an integer $i\le |R|$ such that $u'u_iu''u_{i+1}u'$ is a face of the plane graph $G^*$.
We modify the graph $G^*$ by  joining each vertex in $R_1$ to each vertex in $D$ and making these edges cross the face $u'u_iu''u_{i+1}u'$,
to obtain the graph $G'$ (see $G'$ in Figure \ref{fig.-12}). Then, $G'$ is a plane graph.

We first show that $G'$ is $F$-free.
Suppose to the contrary, then $G'$ contains a subgraph $F'$ isomorphic to $F$.
From the modification, we can see that $V(F')\cap R_1$ is not empty.
Moreover, since $|R|>\frac{97n}{100}> |V(F)|=|V(F')|$, we have
$$|R\setminus V(F')|=|R|-|R\cap V(F')|>|V(F')|-|V(F')\cap R|\geq |V(F')\cap R_1|.$$
Then, we may assume that $V(F')\cap R_1=\{v_1,\dots,v_b\}$ and $\{w_1,\dots,w_b\}\in R\setminus V(F')$.
Clearly, $N_{G'}(v_i)=D\subseteq N_{G'}(w_i)$ for each $i\in \{1,\dots,b\}$.
This indicates that a copy of $F$ is already present in $G$, a contradiction.
Therefore, $G'$ is $F$-free.

In what follows, we shall obtain a contradiction by showing that $\rho(G')>\rho$.
Clearly, $G[R_1]$ is  planar. Then, there exists a vertex $v_1\in R_1$ with $d_{R_1}(v_1)\leq 5$.
Define $R_2=R_1\setminus\{v_1\}$.
Repeat this step, we obtain a sequence of sets $R_1,R_2,\cdots,R_{a}$
with $d_{R_{i}}(v_{i})\leq 5$ for each $i\in\{1,\ldots,a\}$.
Combining this with (\ref{align.a8}), we have
\begin{eqnarray}\label{align.13}
    \sum_{w\in N_{D\cup R\cup R_i}(v_i)}x_w\le 1+\sum_{w\in N_R(v_i)}x_w+\sum_{w\in N_{R_i}(v_i)}x_w\le \frac{17}{10},
\end{eqnarray}
where the last inequality holds as $x_w<\frac{1}{10}$ for any $w\in R\cup R_1$ by Claim \ref{claim2.6}.
It is not hard to verify that in graph $G$ the set of edges incident to vertices in $R_1$ is $\bigcup_{i=1}^{a}\{wv_i~|~w\in N_{D\cup R\cup R_i}(v_i)\}$. Note that $x_{u'}+x_{u''}\geq \frac{1988}{1000}$.
Combining these with (\ref{align.13}), we have
$$\rho(G')-\rho\geq \frac{2}{X^{\mathrm{T}}X}\sum_{i=1}^{a}x_{v_i}\left((x_{u'}+x_{u''})-\sum_{w\in N_{D\cup R\cup R_i}(v_i)}x_w\right)>0,$$
which contradicts that $G$ is extremal to ${\rm spex}_{\mathcal{P}}(n,F)$.
Therefore, $R_1$ is empty.
\end{proof}

By Claim \ref{claim2.7}, $G$ contains a copy of $K_{2,n-2}$.
The proof of Theorem \ref{theorem1.1} is complete.
\end{proof}

\section{Proofs of Theorems \ref{theorem1.2} and \ref{theorem1.3}}\label{section3}

Recall that $G$ is an extremal graph to ${\rm spex}_{\mathcal{P}}(n,F)$.
In the case $F=C_3$, by Theorem \ref{theorem1.1}, $G$ contains a copy of $K_{2,n-2}$.
We further obtain that $G\cong K_{2,n-2}$ as $G$ is triangle-free
(otherwise, adding any edge increases triangles, a contradiction).
Now we consider the case $F\in \{C_{\ell}~|~\ell\geq 5\}\cup \{2C_{\ell}~|~\ell\geq 3\}$.
Let $P_n$ denote a path on $n$ vertices.
For convenience, an isolated vertex is referred to as a path of order 1.
We first give two lemmas to characterize the structural features of the extremal graph $G$,
which help us to present an approach to prove Theorems \ref{theorem1.2} and \ref{theorem1.3}.
\begin{lem}\label{lemma3.1}
Let $n\geq \max\{2.16\times 10^{17},2|V(F)|\}$ and $F\in \{C_{\ell}~|~\ell\geq 5\}\cup \{2C_{\ell}~|~\ell\geq 3\}$.
Then $G\cong K_2+G[R]$ and $G[R]$ is a disjoint union of paths.
\end{lem}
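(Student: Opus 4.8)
The plan is to leverage Theorem~\ref{theorem1.1} as the starting point. Since each $F\in\{C_\ell:\ell\ge5\}\cup\{2C_\ell:\ell\ge3\}$ is a planar graph that embeds into $2K_1+P_{n/2}$ but not into $K_{2,n-2}$ (the latter being bipartite with one side of size two, hence containing no cycle of length exceeding four and no two vertex-disjoint cycles), Theorem~\ref{theorem1.1} applies and tells us that the extremal graph $G$ contains a copy of $K_{2,n-2}$. Let $\{u',u''\}$ be the two vertices on the small side of this $K_{2,n-2}$ and let $R$ denote the remaining $n-2$ vertices, so that both $u'$ and $u''$ are adjacent to every vertex of $R$. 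The goal splits into two parts: first showing $G\cong K_2+G[R]$, i.e. that $u'u''\in E(G)$ and that $R$ induces no further edges to $D=\{u',u''\}$ beyond what $K_{2,n-2}$ already provides; and second showing $G[R]$ is a disjoint union of paths.

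First I would argue $u'u''\in E(G)$. If this edge were absent, adding it keeps the graph planar (the two apex vertices of a $K_{2,n-2}$ can always be joined in the plane) and cannot create any copy of $F$: any cycle using the new edge $u'u''$ together with the bipartite structure is a short cycle through both apexes, and one checks this introduces neither a long cycle $C_\ell$ ($\ell\ge5$) nor a second vertex-disjoint cycle, since every cycle in the resulting graph must pass through $D$. Adding an edge strictly increases the spectral radius, contradicting extremality. Hence $u'u''\in E(G)$, giving the join structure $G=K_2+G[R]$ provided no vertex of $R$ has extra adjacencies into $D$ — but since $D$ has only two vertices and both already dominate $R$, there is nothing more to add there.

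The substantive part is showing $G[R]$ is a disjoint union of paths, equivalently that $G[R]$ has maximum degree at most two and is acyclic. Here I would use the forbidden-subgraph structure directly. If $G[R]$ contained a vertex $w$ of degree at least three, say with neighbors $w_1,w_2,w_3$ in $R$, then together with an apex vertex one builds a $K_{2,3}$-type or $K_4$-minor configuration inside $K_2+G[R]$; more to the point, for the relevant $F$ one shows such a degree-three vertex forces either a long cycle through $D$ or two disjoint cycles. Similarly, a cycle in $G[R]$ yields, using one apex vertex as a hub, a second cycle vertex-disjoint from a cycle built on the other apex and the remaining path structure, producing a $2C_\ell$ or a $C_\ell$ of forbidden length. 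The precise case analysis depends on which $F$ is forbidden, and this is where the length parameter $\ell$ enters: I expect the main obstacle to be ruling out cycles and high-degree vertices in $G[R]$ \emph{uniformly} across the two families, since a $C_\ell$-free constraint ($\ell\ge5$) bounds the longest path/cycle that can be routed through the apexes, while a $2C_\ell$-free constraint instead forbids two disjoint cyclic structures. In both cases the key observation is that $K_2+H$ contains a cycle of length $k$ through both apexes whenever $H$ contains a path on $k-2$ vertices, and contains two disjoint cycles whenever $H$ contains a cycle together with enough disjoint path material; translating these into the statement that $G[R]$ must be a linear forest is the heart of the argument.

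Finally I would assemble these: once $G[R]$ has maximum degree at most two and contains no cycle, it is by definition a disjoint union of paths (allowing isolated vertices as trivial paths, per the convention stated in the excerpt). The hardest step is the structural forcing in the third paragraph — specifically, quantifying how a forbidden subgraph in $G$ arises from any deviation of $G[R]$ from a linear forest, while respecting planarity and the exact value of $\ell$. I would handle this by arguing that $K_2+G[R]$ is planar (as $G$ is), which already constrains $G[R]$ to be outerplanar-like, and then invoking the specific cycle-packing or long-cycle consequences of $K_2+(\text{cycle or high-degree vertex})$ to contradict $F$-freeness.
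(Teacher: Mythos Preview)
Your overall skeleton (invoke Theorem~\ref{theorem1.1}, then show $u'u''\in E(G)$, then show $G[R]$ is a linear forest) matches the paper, but the substance of the third step is off and the second is sketchier than it should be.

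For the linear-forest step, you try to derive the constraint on $G[R]$ from $F$-freeness. This does not work uniformly over the family in question. Take $F=C_{100}$ (or $F=2C_{100}$): a triangle in $G[R]$, or a vertex of degree three in $G[R]$, does not by itself produce a $C_{100}$ (or two disjoint $C_{100}$'s) in $K_2+G[R]$. The paper's argument uses \emph{only planarity} here, and never invokes $F$-freeness at all: once $u'$ and $u''$ are both dominating and adjacent, any $K_3$ minor in $G[R]$ extends to a $K_5$ minor in $G$, and any $K_{1,3}$ minor in $G[R]$ extends (with $\{u',u''\}$ joining the centre) to a $K_{3,3}$ minor in $G$. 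Hence $G[R]$ is $K_3$-minor-free and $K_{1,3}$-minor-free, i.e.\ acyclic with maximum degree at most two. Your closing remark that planarity makes $G[R]$ ``outerplanar-like'' is in the right direction but too weak --- outerplanarity would only give $K_4$- and $K_{2,3}$-minor-freeness, which does not rule out cycles or degree-three vertices.

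For the $u'u''$ step, your assertion that the new edge creates only ``short cycles'' is not correct: adding $u'u''$ can create a cycle of any length. The paper's fix is different: for $F=C_\ell$, any $\ell$-cycle $u'u''u_1'\cdots u_{\ell-2}'u'$ in the augmented graph yields the $\ell$-cycle $u'u_1'u''u_2'\cdots u_{\ell-2}'u'$ already in $G$ (since $u',u''$ dominate $R$); for $F=2C_\ell$, one of the two cycles must use $u'u''$, so the other lies entirely in $G[R]$, and together with $\{u',u''\}$ it yields a $K_5$ minor, contradicting planarity. Again the key tool is Kuratowski/Wagner rather than a direct cycle-length count.
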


\begin{proof}
Since $n\geq 2|V(F)|$ and $F\in \{C_{\ell}~|~\ell\geq 5\}\cup \{2C_{\ell}~|~\ell\geq 3\}$,
we can see that $F$ is a subgraph of $2K_1+P_{n/2}$ but not of $K_{2,n-2}$.
Hence, by Theorem \ref{theorem1.1}, $G$ contains a copy of $K_{2,n-2}$,
and $u',u''$ are the vertices of degree $n-2$ and $R$ is the set of vertices of degree two in $K_{2,n-2}$.
\begin{claim}\label{Claim3.1}
$u'u''\in E(G)$.
\end{claim}

\begin{proof}
Suppose to the contrary that $u'u''\notin E(G)$.
Assume that $G^*$ is a planar embedding of $G$, and  $u_1,\dots,u_{n-2}$ are
around $u'$ in clockwise order in $G^*$, with subscripts interpreted modulo $n-2$.
If $R$ induces an cycle $u_1u_2\dots u_{n-2}u_1$,
then we can easily check that $G^*$ contains a copy of $F$, a contradiction.
Thus, there exists an integer $i\le n-2$ such that $u_iu_{i+1}\notin E(G^*[R])$.
Furthermore, $u'u_iu''u_{i+1}u'$ is a face in $G^*$.

We modify the graph $G^*$ by adding the edge $u'u''$ and making $u'u''$ cross the face $u'u_iu''u_{i+1}u'$,
to obtain the graph $G'$.
Clearly, $G'$ is a plane graph.
We shall show that $G'$ is $F$-free.
Suppose to the contrary that $G'$ contains a subgraph $F'$ isomorphic to $F$.
If $F'=C_{\ell}$ for some $\ell\geq 5$,
then $G'$ contains an $\ell$-cycle containing $u'u''$, say $u'u''u_1'u_2'\dots u_{\ell-2}'u'$.
However, an ${\ell}$-cycle $u'u_1'u''u_2'\dots u_{\ell-2}'u'$ is already present in $G$, a contradiction.
If $F'=2C_{\ell}$ for some $\ell\geq 3$,
then $F'$ contains two vertex-disjoint $\ell$-cycles $C^1$ and $C^2$.
From the modification, we can see that one of $C^1$ and $C^2$ (say $C^1$) contains the edge $u'u''$.
This implies that $C^2$ is a subgraph of $G[R]$.
However, $G[V(C^2)\cup \{u',u''\}]$ contains a $K_5$-minor, contradicting the fact that $G$ is planar.
Hence, $G'$ is $F$-free.
However, $\rho(G')>\rho$, contradicting that $G$ is extremal to ${\rm spex}_{\mathcal{P}}(n,F)$.
Therefore, $u'u''\in E(G)$.
\end{proof}

From Claim \ref{Claim3.1} we know that $G=K_2+G[R]$.
It remains to show that $G[R]$ is a disjoint union of paths.
Theorem \ref{theorem1.1} and Claim \ref{Claim3.1} imply that $u'$ and $u''$ are dominating vertices.
Furthermore, since $G$ is $K_5$-minor-free and $K_{3,3}$-minor-free,
we can see that $G[R]$ is $K_3$-minor-free and $K_{1,3}$-minor-free.
This implies that $G[R]$ is an acyclic graph with maximum degree at most 2.
Thus, $G[R]$ is a disjoint union of paths.
The result follows.
\end{proof}

Now we give a transformation that we will use in subsequent proof.
\begin{definition}\label{def3.1}
Let $s_1$ and $s_2$ be two integers with $s_1\geq s_2\geq 1$,
and let $H=P_{s_1}\cup P_{s_2}\cup H_0$, where $H_0$ is a disjoint union of paths.
We say that $H^*$ is an $(s_1,s_2)$-transformation of $H$ if
  $$H^*:= \left\{
                          \begin{array}{ll}
                          P_{s_1+1}\cup P_{s_2-1}\cup H_0 & \hbox{if $s_2\geq 2$,} \\
                          P_{s_1+s_2}\cup H_0~~~~~~~ & \hbox{if $s_2= 1$.}
                          \end{array}
                          \right.
  $$
\end{definition}
Clearly, $H^*$ is a disjoint union of paths, which implies that $K_2+H^*$ is planar.
If $G[R]\cong H$, then we shall show that $\rho(K_2+H^*)>\rho$ for sufficiently large $n$.
\begin{lem}\label{lemma3.2}
Let $H$ and $H^*$ be defined as in Definition \ref{def3.1}, $n\geq \max\{2.16\times 10^{17},2|V(F)|,9\times2^{s_2+1}+3\}$ and $F\in \{C_{\ell}~|~\ell\geq 5\}\cup \{2C_{\ell}~|~\ell\geq 3\}$.
If $G[R]\cong H$, then $\rho(K_2+H^*)>\rho$.
\end{lem}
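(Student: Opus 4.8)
The plan is to reduce the comparison of $\rho(K_2+H)$ and $\rho(K_2+H^\ast)$ to a single scalar equation in $\rho$, exploiting that the two vertices of the $K_2$ are true twins. By Lemma \ref{lemma3.1} we may write $G=K_2+H$ where $H$ is a disjoint union of paths and $u',u''$ are the two dominating vertices. As $u'u''\in E(G)$ and $N(u')\setminus\{u''\}=N(u'')\setminus\{u'\}=R$, these vertices are true twins, so the positive Perron vector has $x_{u'}=x_{u''}=:p$. For a path component with vertex set $\{v_1,\dots,v_m\}$ and internal adjacency matrix $A_{P_m}$, the eigen-equations restricted to that component read $(\rho I-A_{P_m})\mathbf x=2p\mathbf 1$, where $\mathbf x=(x_{v_1},\dots,x_{v_m})^{\mathrm T}$ (every $v_i$ sees both apices, contributing $x_{u'}+x_{u''}=2p$). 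Since $\rho\ge\sqrt{2n-4}>2>\rho(A_{P_m})$ this inverts to $\mathbf x=2p(\rho I-A_{P_m})^{-1}\mathbf 1$, so $\sum_{v\in P_m}x_v=p\,g_m(\rho)$ with $g_m(\rho):=2\,\mathbf 1^{\mathrm T}(\rho I-A_{P_m})^{-1}\mathbf 1$. By the Neumann expansion $(\rho I-A_{P_m})^{-1}=\sum_{k\ge0}\rho^{-k-1}A_{P_m}^{k}$ this function is positive and strictly decreasing in $\rho$, and solving the recurrence explicitly (with $\cosh\theta=\rho/2$) yields the closed form
\[
g_m(\rho)=\frac{2}{\rho-2}\Bigl(m-\frac{\sinh(m\theta/2)}{\sinh(\theta/2)\,\cosh((m+1)\theta/2)}\Bigr).
\]

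Feeding this into the apex equation $\rho p=p+\sum_{v\in R}x_v$ collapses everything to the scalar identity $\rho-1=\sum_{P_{m_j}\subseteq H}g_{m_j}(\rho)$, whose largest root is the Perron radius $\rho(K_2+H)$; the same holds for any union of paths. Writing $\Phi_L(\rho)=\sum_{P_{m_j}\subseteq L}g_{m_j}(\rho)-(\rho-1)$, which is strictly decreasing in $\rho$, the radius $\rho(K_2+L)$ is the unique root of $\Phi_L$. Setting $\rho^\ast=\rho(K_2+H)$ we have $\Phi_H(\rho^\ast)=0$, and since $H$ and $H^\ast$ differ only by replacing $\{P_{s_1},P_{s_2}\}$ with $\{P_{s_1+1},P_{s_2-1}\}$ (with the convention $P_0=\varnothing$, $g_0=0$, which also covers $s_2=1$),
\[
\Phi_{H^\ast}(\rho^\ast)=\bigl(g_{s_1+1}-g_{s_1}\bigr)(\rho^\ast)-\bigl(g_{s_2}-g_{s_2-1}\bigr)(\rho^\ast).
\]
Thus it suffices to prove that $g_m(\rho^\ast)$ is \emph{strictly convex} in $m$: then the forward increment at $s_1$ strictly exceeds that at $s_2-1<s_1$, so $\Phi_{H^\ast}(\rho^\ast)>0$; since $\Phi_{H^\ast}\to-\infty$ and $\Phi_{H^\ast}$ is strictly decreasing, its unique root lies in $(\rho^\ast,\infty)$, giving $\rho(K_2+H^\ast)>\rho^\ast$, as required.

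The main obstacle is exactly this convexity; the rest is bookkeeping. Because the $m$-linear part of $g_m$ contributes nothing to a second difference, strict convexity of $g_m$ is equivalent to $\Delta^2 h(m)<0$ for $h(m)=\sinh(m\theta/2)/\cosh((m+1)\theta/2)$. Setting $z=e^{-\theta}=\tfrac{\rho-\sqrt{\rho^2-4}}2\in(0,1)$ and writing $h(m)=\sqrt z\,\frac{1-z^{m}}{1+z^{m+1}}$, I would form $\Delta^2 h(m)$ over a common denominator; the numerator factors cleanly, and one finds that the sign of $\Delta^2 h(m)$ is that of $-\bigl[(1-z)^2+z^{m+1}(z^2+2z-1)\bigr]$. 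For every $m\ge1$ this bracket is positive once $z<\tfrac12$: if $z\ge\sqrt2-1$ both summands are nonnegative, and if $z<\sqrt2-1$ one bounds $z^{m+1}(1-2z-z^2)\le z^{2}<(1-z)^2$. Since $\rho\ge\sqrt{2n-4}$ is enormous we have $z<\tfrac12$ with room to spare (the hypothesis $n\ge 9\cdot2^{s_2+1}+3$ in fact makes $z$ exponentially small in $s_2$), so $\Delta^2 h<0$ and $g_m$ is strictly convex, closing the argument. I expect the only delicate points to be the explicit second-difference sign computation and the remark that the largest root of the scalar equation is genuinely the Perron radius, the latter following from $g_m(\rho)$ being positive and strictly decreasing for $\rho>2$.
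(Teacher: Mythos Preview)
Your approach is correct and genuinely different from the paper's. The paper works directly with the Perron eigenvector of $G=K_2+H$: it first pins down each entry on $R$ to within $O(\rho^{-2})$ (Claim~3.2), then by an inductive telescoping argument (Claim~3.3) controls the signs of the differences $x_{v_{i+1}}-x_{v_i}$, $x_{w_{i+1}}-x_{w_i}$ and $x_{v_i}-x_{w_i}$ up to index about $s_2/2$. With these inequalities in hand it performs an explicit two-edge swap (delete $v_{t_1}v_{t_1+1},w_{t_2}w_{t_2+1}$, add $v_{t_1}w_{t_2},v_{t_1+1}w_{t_2+1}$ with $t_1+t_2=s_2-1$) and reads off the sign of the Rayleigh increment, splitting into cases $s_2=1$, $s_2=2$, $s_2\ge3$ odd, $s_2\ge3$ even. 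The hypothesis $n\ge 9\cdot2^{s_2+1}+3$ is exactly what makes the error terms $6\cdot2^i/\rho^2$ in the inductive bounds small enough for the needed range of $i$.

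Your route bypasses all of that case analysis by collapsing the eigen-problem to the scalar equation $\rho-1=\sum_j g_{m_j}(\rho)$ and reducing the lemma to the strict discrete convexity of $m\mapsto g_m(\rho)$. This is cleaner and in fact stronger: once one writes $\Delta f(m)=f(m+1)-f(m)=\dfrac{z^{m}(1-z^{2})}{(1+z^{m+1})(1+z^{m+2})}$ for $f(m)=\dfrac{1-z^m}{1+z^{m+1}}$, a second differencing gives
\[
\Delta^{2}f(m)\;=\;-\,\frac{(1-z)^{2}(1+z)\,z^{m}\,(1-z^{m+2})}{(1+z^{m+1})(1+z^{m+2})(1+z^{m+3})}\;<\;0
\]
for every $m\ge0$ and every $z\in(0,1)$, so no restriction like $z<\tfrac12$ is actually needed (and hence neither is the bound $n\ge 9\cdot2^{s_2+1}+3$; only $\rho>2$ is used). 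Your displayed bracket $-\bigl[(1-z)^2+z^{m+1}(z^2+2z-1)\bigr]$ does not match this factorisation, so you should recheck that algebra, but the conclusion survives. Also make sure your convexity statement covers the boundary index: for $s_2=1$ you are comparing $\Delta g_{s_1}$ with $\Delta g_0=g_1$, which your ``$m\ge1$'' clause does not literally include; either extend the inequality to $m=0$ (it holds) or simply note, as the paper does, that for $s_2=1$ one has $H\subset H^\ast$ and hence $K_2+H\subsetneq K_2+H^\ast$, giving $\rho(K_2+H^\ast)>\rho$ immediately.
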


\begin{proof}
By Lemma \ref{lemma3.1}, $G\cong K_2+G[R]$ and $G[R]$ is a disjoint union of paths.
If $G[R]\cong H$, then $G=K_2+H$.
Assume that $P^1:=v_1v_2\dots v_{s_1}$ and $P^2:=w_1w_2\dots w_{s_2}$ are two components of $H$.
If $s_2=1$, then $H\subset H^*$, and so $G\subset K_2+H^*$.
It follows that $\rho(P_2+H^*)>\rho$, the result holds.
Next, we deal with the case $s_2=2$.
If $x_{v_1}\leq x_{w_1}$, then let $H'$ be obtained from $H$ by deleting the edge $v_{1}v_{2}$ and
adding the edge  $v_{2}w_{1}$. Clearly, $H'\cong H^*$. Moreover,
\begin{eqnarray*}
\rho(K_2+H^*)-\rho
\geq\frac{X^{\mathrm{T}}\big(A(K_2+H^*)-A(G)\big)X}{X^{\mathrm{T}}X}
\geq\frac{2}{X^{\mathrm{T}}X}(x_{w_{1}}-x_{v_{1}})x_{v_{2}}\geq 0.
\end{eqnarray*}
Since $X$ is a positive eigenvector of $G$, we have $\rho x_{v_1}=2+x_{v_2}$.
If $\rho(K_2+H^*)=\rho$, then $X$ is also a positive eigenvector of $K_2+H^*$, and so $\rho(K_2+H^*) x_{v_1}=2$, contradicting that $\rho x_{v_1}=2+x_{v_2}$.
Thus, $\rho(K_2+H^*)>\rho$, the result holds.
The case $x_{v_1}>x_{w_1}$ is similar and hence omitted here.

It remains the case $s_2\geq 3$.
We shall give characterizations of eigenvector entries of vertices in $R$ in the following two claims.

\begin{claim}\label{Claim3.2}
For any vertex $u\in R$, we have $x_u\in [\frac{2}{\rho}, \frac{2}{\rho}+\frac{6}{\rho^2}]$.
\end{claim}

\begin{proof}
By Lemma \ref{lemma3.1}, $u'$ and $u''$ are dominating vertices of $G$. So, $x_{u'}=x_{u''}=1$. Hence
\begin{eqnarray}\label{align.+14}
\rho x_u=x_{u'}+x_{u''}+\sum_{v\in N_R(u)}x_v=2+\sum_{v\in N_R(u)}x_v.
\end{eqnarray}
Moreover, by Lemma \ref{lemma3.1}, $d_{R}(v)\le 2$ for any $v\in R$.
Combining this with Claim \ref{claim2.6} and \eqref{align.+14} gives $x_u\in \big[\frac{2}{\rho},\frac{3}{\rho}\big]$.
Furthermore, again by \eqref{align.+14}, $\rho x_u\in \big[2,2+\frac{6}{\rho}\big]$, which yields that $x_u\in \big[\frac{2}{\rho}, \frac{2}{\rho}+\frac{6}{\rho^2}\big]$.
\end{proof}

\begin{claim}\label{claim3.1}
Let $i$ be a positive integer. Set $A_i=\big[\frac{2}{\rho}-\frac{6\times 2^i}{\rho^2},\frac{2}{\rho}+\frac{6\times 2^i}{\rho^2}\big]$ and $B_i=\big[-\frac{6\times 2^i}{\rho^{2}},\frac{6\times 2^i}{\rho^{2}}\big]$. Then, \\
(i) for any $i\in \{1,\dots,\lfloor\frac{s_1-1}{2}\rfloor\}$, $\rho^i(x_{v_{i+1}}-x_{v_i})\in A_i$;\\
 (ii) for any $i\in \{1,\dots,\lfloor\frac{s_2-1}{2}\rfloor\}$, $\rho^i(x_{w_{i+1}}-x_{w_i})\in B_i$;\\
(iii) for  any $i\in \{1,\dots,\lfloor\frac{s_2}{2}\rfloor\}$,
 $\rho^i(x_{v_{i}}-x_{w_i})\in B_i$.
\end{claim}
\begin{proof}
(i) It suffices to prove that for any $i\in \{1,\dots,\lfloor\frac{s_1-1}{2}\rfloor\}$,
\begin{eqnarray*}
 \rho^i(x_{v_{j+1}}-x_{v_j})\in \left\{
                                       \begin{array}{ll}
                                         A_i  & \hbox{if $j=i$,} \\
                                         B_i  & \hbox{if $i+1\leq j\leq s_1-i-1$.}
                                       \end{array}
                                     \right.
\end{eqnarray*}
We shall proceed the proof by induction on $i$. Clearly,
\begin{eqnarray}\label{align.+15}
\rho x_{v_j}=\sum_{v\in N_G(v_j)}x_v= \left\{
                                       \begin{array}{ll}
                                         2+x_{v_2}  & \hbox{if $j=1$,} \\
                                         2+x_{v_{j-1}}+x_{v_{j+1}}  & \hbox{if $2\le j\le s_1-1$.}
                                       \end{array}
                                     \right.
\end{eqnarray}
By Claim \ref{Claim3.2}, we have
\begin{eqnarray}\label{align.110}
\rho(x_{v_{j+1}}-x_{v_j})= \left\{
                                       \begin{array}{ll}
                                        x_{v_{1}}+x_{v_{3}}-x_{v_2}\in A_1  & \hbox{if $j=1$,} \\
                                         (x_{v_{j}}-x_{v_{j-1}})+(x_{v_{j+2}}-x_{j+1})
                                         \in B_1  & \hbox{if $2\le j\le s_1-2$.}
                                       \end{array}
                                     \right.
\end{eqnarray}
So the result is true when $i=1$.
Assume then that $2\le i\le \lfloor\frac{s_1-1}{2}\rfloor$, which implies that $s_1\geq 2i+1$.
For $i\le j\le s_1-i-1$, $\rho(x_{v_{j+1}}-x_{v_j})=(x_{v_{j}}-x_{v_{j-1}})+(x_{v_{j+2}}-x_{v_{j+1}})$, and so
\begin{eqnarray}\label{align.111}
\rho^i(x_{v_{j+1}}-x_{v_j})=\rho^{i-1}(x_{v_{j}}-x_{v_{j-1}})+\rho^{i-1}(x_{v_{j+2}}-x_{v_{j+1}}).
\end{eqnarray}

By the induction hypothesis, $\rho^{i-1}(x_{v_{i}}-x_{v_{i-1}})\in A_{i-1}$ and
$\rho^{i-1}(x_{v_{i+2}}-x_{v_{i+1}})\in B_{i-1}$.
Setting $j=i$ and combining \eqref{align.111}, we have $\rho^i(x_{v_{i+1}}-x_{v_i})\in A_i$, as desired.
If $i+1\le j\le s_1-i-1$, then by the induction hypothesis, $\rho^{i-1}(x_{v_{j}}-x_{v_{j-1}})\in B_{i-1}$ and $\rho^{i-1}(x_{v_{j+2}}-x_{v_{j+1}})\in B_{i-1}$.
Thus, by \eqref{align.111}, we have $\rho^i(x_{v_{j+1}}-x_{v_j})\in B_i$, as desired.
Hence the result holds.

 The proof of (ii) is similar to that of (i) and hence omitted here.

(iii) It suffices to prove that for  any $i\in \{1,\dots,\lfloor\frac{s_2}{2}\rfloor\}$ and $j\in \{i,\dots,s_2-i\}$,
 $\rho^i (x_{v_{j}}-x_{w_j})\in B_i.$
We shall proceed the proof by induction on $i$. Clearly,
$$\rho x_{w_j}=\sum_{w\in N_G(w_j)}x_w= \left\{
                                       \begin{array}{ll}
                                         2+x_{w_2}  & \hbox{if $j=1$,} \\
                                         2+x_{w_{j-1}}+x_{w_{j+1}}  & \hbox{if $2\le j\le s_2-1$.}
                                       \end{array}
                                     \right.
$$
Combining this with (\ref{align.+15}) and Claim \ref{Claim3.2} gives
$$\rho(x_{v_j}-x_{w_j})= \left\{
                                       \begin{array}{ll}
                                         x_{v_{2}}-x_{w_2}\in\big[-\frac{6}{\rho^2},\frac{6}{\rho^2}\big]\subset B_1  & \hbox{if $j=1$,} \\
                                         (x_{v_{j+1}}-x_{w_{j+1}})+(x_{v_{j-1}}-x_{w_{j-1}})
                                         \in B_1 & \hbox{if $2\le j\le s_2-1$.}
                                       \end{array}
                                     \right.
$$
So the claim is true when $i=1$.
Assume then that $2\le i\le \lfloor\frac{s_2}{2}\rfloor$, which implies that $s_2\geq 2i$.
If $i\le j\le s_2-i$, then $\rho(x_{v_{j}}-x_{w_j})=(x_{v_{j-1}}-x_{w_{j-1}})+(x_{v_{j+1}}-x_{w_{j+1}})$, and so
\begin{eqnarray}\label{align.120}
\rho^i(x_{v_{j}}-x_{w_j})
=\rho^{i-1}(x_{v_{j-1}}-x_{w_{j-1}})+\rho^{i-1}(x_{v_{j+1}}-x_{w_{j+1}}).
\end{eqnarray}
By the induction hypothesis,
$\rho^{i-1}(x_{v_{j-1}}-x_{w_{j-1}})\in B_{i-1}$ and $\rho^{i-1}(x_{v_{j+1}}-x_{w_{j+1}})\in B_{i-1}$.
Combining these with \eqref{align.120}, we have $\rho^i(x_{v_{j}}-x_{w_j})=B_i$.
Hence the result holds.
\end{proof}

Since $n\geq 9\times2^{s_2+1}+3$, we have $\rho\geq \sqrt{2n-4}> 6\times 2^{s_2/{2}}$, and so
\begin{eqnarray*}
 \frac{2}{\rho^{i+1}}-\frac{6\times 2^i}{\rho^{i+2}}> \left(\frac{2}{\rho^{i+1}}-\frac{6\times 2^i}{\rho^{i+2}}\right)-\frac{6\times 2^i}{\rho^{i+2}}>0~~\text{for any}~~i\leq \frac{s_2}{2}.
\end{eqnarray*}
Combining this with Claim \ref{claim3.1}, we obtain
\begin{eqnarray}\label{align.115}
 x_{v_{i+1}}-x_{v_{i}}\geq\frac{2}{\rho^{i+1}}-\frac{6\times 2^i}{\rho^{i+2}}>0~~\text{for any}~~i\leq \min\Big\{\frac{s_2}{2},\Big\lfloor\frac{s_1-1}{2}\Big\rfloor\Big\},
\end{eqnarray}
and
\begin{eqnarray}\label{align.16}
 x_{v_{i+1}}-x_{w_{i}}=(x_{v_{i+1}}-x_{v_{i}})+(x_{v_{i}}-x_{w_{i}})\geq \left(\frac{2}{\rho^{i+1}}-\frac{6\times 2^i}{\rho^{i+2}}\right)-\frac{6\times 2^i}{\rho^{i+2}}>0
\end{eqnarray}
for any $i\leq \min\big\{\big\lfloor\frac{s_2}{2}\big\rfloor,\big\lfloor\frac{s_1-1}{2}\big\rfloor\big\}$.
Similarly,
\begin{eqnarray}\label{align.17}
x_{w_{i+1}}>x_{w_{i}}~~\text{and}~~x_{w_{i+1}}>x_{v_{i}}~~\text{for any}~~i\leq \Big\lfloor\frac{s_2-1}{2}\Big\rfloor.
\end{eqnarray}

Recall that $s_2\geq 3$.
Let $t_1,t_2$ be integers with $t_1,t_2\geq 1$ and $t_1+t_2=s_2-1$, and $H'$ be obtained from $H$ by deleting edges $v_{t_1}v_{t_1+1},w_{t_2}w_{t_2+1}$ and
adding edges $v_{t_1}w_{t_2},v_{t_1+1}w_{t_2+1}$.
Since $t_1+t_2=s_2-1$, we can see that $H'\cong H^*$.
Moreover,
\begin{eqnarray}\label{align.15}
\rho(K_2+H^*)-\rho\geq\frac{X^{\mathrm{T}}\big(A(K_2+H^*)-A(G)\big)X}{X^{\mathrm{T}}X}
\geq\frac{2}{X^{\mathrm{T}}X}(x_{v_{t_1+1}}-x_{w_{t_2}})(x_{w_{t_2+1}}-x_{v_{t_1}}).
\end{eqnarray}

Now, we consider the following two cases to complete the proof.

\vspace{2mm}
\noindent{{\bf{Case 1.}}} $s_2$ is odd.

Set $t_1=\frac{s_2-1}{2}$. Then, $t_2=\frac{s_2-1}{2}$ as $t_1+t_2=s_2-1$.
By (\ref{align.16}) and (\ref{align.17}), we have
$x_{v_{t_1+1}}>x_{w_{t_2}}$ and $x_{w_{t_2+1}}>x_{v_{t_1}}$,
and so $\rho(K_2+H^*)>\rho$ by \eqref{align.15}, as desired.

\vspace{2mm}
\noindent{{\bf{Case 2.}}} $s_2$ is even.

We only consider the subcase $x_{w_{s_2/2}}\geq x_{v_{s_2/2}}$.
The proof of the subcase $x_{w_{s_2/2}}<x_{v_{s_2/2}}$ is similar and hence omitted here.
Set $t_1=\frac{s_2}{2}$. Then, $t_2=\frac{s_2-2}{2}$ as $t_1+t_2=s_2-1$.
Moreover, $x_{w_{t_2+1}}\geq x_{v_{t_1}}$, as $x_{w_{s_2/2}}\geq x_{v_{s_2/2}}$.
If $s_1=s_2$, then $s_1$ is even.
This implies that $x_{v_{(s_1+2)/2}}=x_{v_{s_1/2}}$ by symmetry, that is, $x_{v_{t_1+1}}=x_{v_{t_1}}$.
If $s_1\geq s_2+1$, then by \eqref{align.115}, $x_{v_{t_1+1}}>x_{v_{t_1}}$.
In both situations, we have $x_{v_{t_1+1}}\geq x_{v_{t_1}}$.
From (\ref{align.16}) we have $x_{v_{t_1}}>x_{w_{t_2}}$, which implies that $x_{v_{t_1+1}}>x_{w_{t_2}}$.
Furthermore, we have $\rho(K_2+H^*)\geq\rho$ by \eqref{align.15}.
If $\rho(K_2+H^*)=\rho$,
then $X$ is a positive eigenvector of $K_2+H^*$, and so $\rho(K_2+H^*) x_{v_{t_1}}=2+x_{v_{t_1-1}}+x_{w_{t_2}}$.
On the other hand, $\rho x_{v_{t_1}}=2+x_{v_{t_1-1}}+x_{v_{t_1+1}}$,
since $X$ is a positive eigenvector of $G$.
It follows that $x_{v_{t_1+1}}=x_{w_{t_2}}$, which is a contradiction.
Thus, $\rho(K_2+H^*)>\rho$, as desired.

This completes the proof of Lemma \ref{lemma3.2}.
\end{proof}

From Lemma \ref{lemma3.1}, we may assume that $G[R]= \cup_{i=1}^{t} P_{n_i}$, where $t\geq 2$ and $n_1\geq n_2\geq \cdots \geq n_t$.
Let $H$ be a disjoint union of $t$ paths.
We use $n_i(H)$ to denote the order of the $i$-th longest path of $H$ for any $i\in \{1,\dots,t\}$.
%For convenience, we use $n_i$ instead of $n_i(G[R])$.
Having Lemmas \ref{lemma3.1} and \ref{lemma3.2},
we are ready to complete the proofs of Theorems \ref{theorem1.2} and \ref{theorem1.3}.\\

\noindent{\bf Proof of Theorem \ref{theorem1.2}.}
We first give the following claim.
\begin{claim}\label{cla5.2}
If $H$ is a disjoint union of $t\geq 2$ paths,
then $K_2+H$ is $2C_{\ell}$-free if and only if $n_1(H)\leq 2\ell-3$ and $n_2(H)\leq \ell-2$.
\end{claim}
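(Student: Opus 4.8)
The plan is to analyze the structure of cycles in $K_2+H$ directly. Write the two vertices of the $K_2$ as $p$ and $q$, so that $p$ and $q$ are adjacent to each other and to every vertex of $H$, while $H=\bigcup_{i=1}^{t}P_{n_i}$ is a disjoint union of paths and hence a forest. The first observation I would record is that every cycle of $K_2+H$ must pass through at least one of $p,q$, since deleting $\{p,q\}$ leaves the acyclic graph $H$.

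Next I would pin down what a pair of vertex-disjoint $\ell$-cycles must look like. If $C^1$ and $C^2$ are vertex-disjoint cycles, each contains a vertex of $\{p,q\}$; were some $C^i$ to contain both $p$ and $q$, the other cycle would contain neither and thus lie inside the forest $H$, which is impossible. Hence, after relabelling, $C^1$ contains $p$ but not $q$, and $C^2$ contains $q$ but not $p$. A cycle meeting $\{p,q\}$ in the single vertex $p$ has the form $p\,w_1 w_2\cdots w_m\,p$, where $w_1\cdots w_m$ is a path whose internal edges all lie in $H$; since $H$ is a disjoint union of paths, $w_1\cdots w_m$ is a block of $m$ consecutive vertices inside one component $P_{n_i}$. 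For this cycle to be a $C_\ell$ we need $m=\ell-1$. Thus each of $C^1,C^2$ occupies a block of $\ell-1$ consecutive vertices of a single path component of $H$, and the two blocks are vertex-disjoint.

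This reduces the statement to a packing question: $K_2+H$ contains $2C_\ell$ if and only if $H$ contains two vertex-disjoint subpaths each on exactly $\ell-1$ vertices. Two such blocks exist precisely when either a single component has at least $2(\ell-1)=2\ell-2$ vertices (accommodating both blocks), or two distinct components each have at least $\ell-1$ vertices, i.e.\ $n_2\ge \ell-1$. For the ``if'' direction of the claim I would verify the contrapositive: if $n_1\ge 2\ell-2$ I place both blocks in $P_{n_1}$, and if $n_2\ge \ell-1$ I place one block in $P_{n_1}$ and one in $P_{n_2}$; attaching $p$ to the first block and $q$ to the second yields the required $2C_\ell$. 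For the ``only if'' direction, assuming $n_1\le 2\ell-3$ and $n_2\le \ell-2$, any hypothetical $2C_\ell$ would force two disjoint $(\ell-1)$-blocks, which can lie neither in one component (that needs $n_1\ge 2\ell-2$) nor in two different components (that needs $n_2\ge \ell-1$), a contradiction.

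The main obstacle, and the step deserving the most care, is the structural reduction in the second paragraph: one must argue rigorously that vertex-disjointness of the two cycles forces the clean ``one apex each'' configuration, and that the part of each cycle lying in $H$ is genuinely a contiguous subpath of a single component rather than jumping between components or revisiting an apex. Once this normal form is in hand, the remaining equivalence is an elementary count of how many disjoint $(\ell-1)$-vertex blocks the nonincreasing sequence $n_1\ge n_2\ge\cdots$ can host, governed exactly by the two thresholds $n_1\le 2\ell-3$ and $n_2\le \ell-2$.
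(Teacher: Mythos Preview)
Your proposal is correct and follows essentially the same route as the paper: the paper also reduces the question to whether $H$ (equivalently $P_{n_1(H)}\cup P_{n_2(H)}$) contains two vertex-disjoint copies of $P_{\ell-1}$, by observing that each of the two disjoint $\ell$-cycles must use exactly one of the two apex vertices and hence leave an $(\ell-1)$-path inside a single component of $H$. Your treatment of the ``one apex each'' reduction and the final packing analysis via the thresholds $n_1\le 2\ell-3$, $n_2\le \ell-2$ matches the paper's argument almost verbatim, with slightly more explicit justification of the structural step.
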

\begin{proof}
We first claim that $K_2+H$ is $2C_{\ell}$-free if and only if $P_{n_1(H)}\cup P_{n_2(H)}$ is $2P_{\ell-1}$-free.
Equivalently, $K_2+H$ contains a copy of $2C_{\ell}$ if and only if $P_{n_1(H)}\cup P_{n_2(H)}$ contains a copy of $2P_{\ell-1}$.
Assume that $K_2+H$ contains two vertex-disjoint $\ell$-cycles $C^1$ and $C^2$, and $V(K_2)=\{u',u''\}$.
Since $H$ is acyclic, we can see that $C^i$ must contain at least one vertex of $u'$ and $u''$ for any $i\in \{1,2\}$.
Without loss of generality, assume that $u'\in V(C^1)$ and $u''\in V(C^2)$.
Then, $C^1-\{u'\}\cong C^2-\{u''\}\cong P_{\ell-1}$, and so  $H$ contains a $2P_{\ell-1}$.
We can further find that $P_{n_1(H)}\cup P_{n_2(H)}$ contains a $2P_{\ell-1}$.
Conversely, assume that $P_{n_1(H)}\cup P_{n_2(H)}$ contains two vertex-disjoint paths $P^1$ and $P^2$ such that $P^1\cong P^2\cong P_{\ell-1}$.
Thus, the subgraph induced by $V(P^1)\cup \{u'\}$ contains a copy of $C_{\ell}$.
Similarly, the subgraph induced by $V(P^2)\cup \{u''\}$ contains a copy of $C_{\ell}$.
This indicates that $K_2+H$ contains a copy of $2C_{\ell}$.
So, the claim holds.

Next, we claim that $P_{n_1(H)}\cup P_{n_2(H)}$ is $2P_{\ell-1}$-free
if and only if $n_1(H)\leq 2\ell-3$ and $n_2(H)\leq \ell-2$.
If $P_{n_1(H)}\cup P_{n_2(H)}$ is $2P_{\ell-1}$-free, then $n_1(H)\leq 2\ell-3$ (otherwise, $P_{n_1(H)}$ contains a copy of $2P_{\ell-1}$, a contradiction); $n_2(H)\leq \ell-2$ (otherwise, $P_{n_1(H)}\cup P_{n_2(H)}$ contains a copy of $2P_{\ell-1}$ as $n_1(H)\geq n_2(H)\geq \ell-1$, a contradiction).
Conversely, if $n_1(H)\leq 2\ell-3$ and $n_2(H)\leq \ell-2$, then it is not hard to verify that $P_{n_1(H)}\cup P_{n_2(H)}$ is $2P_{\ell-1}$-free.

This completes the proof of Claim \ref{cla5.2}.
\end{proof}

Recall that $n_i$ (resp. $n_i(H)$) is the order of the $i$-th longest path of $G[R]$ (resp. $H$) for any $i\in \{1,\dots,t\}$.
By Claim \ref{cla5.2}, $n_1\leq 2\ell-3$  and $n_2\leq \ell-2$.
By a direct computation, we have $9\times2^{\ell-1}+3\geq \max\{2|V(F)|,9\times2^{n_2+1}+3\}$, and hence
\begin{eqnarray}\label{align.200}
n\geq \max\{2.16\times 10^{17},2|V(F)|,9\times2^{n_2+1}+3\}.
\end{eqnarray}

We first claim that $n_1=2\ell-3$.
Suppose to the contrary that $n_1\leq 2\ell-4$.
Let $H'$ be an $(n_1,n_t)$-transformation of $G[R]$.
Clearly, $n_1(H')=n_1(H)+1\leq 2\ell-3$ and $n_2(H')=n_2\leq \ell-2$.
By Claim \ref{cla5.2}, $K_2+H'$ is $2C_{\ell}$-free.
However, by \eqref{align.200} and Lemma \ref{lemma3.2}, we have $\rho(K_2+H')>\rho$,
contradicting that $G$ is extremal to ${\rm spex}_{\mathcal{P}}(n,2C_{\ell})$.
Thus, $n_1=2\ell-3$, the claim holds.

Note that $n_i\leq n_2\leq \ell-2$ for each $i\in \{2,\dots,t-1\}$.
We then claim that $n_i=\ell-2$ for $i\in \{2,\dots,t-1\}$.
If $\ell=3$, then $n_i=1$, as $n_i\leq \ell-2$, for each $i\in \{2,\dots,t\}$ and the claim holds trivially.
Now let $\ell\geq 4$.
Suppose to the contrary, then set
      $i_0=\min\{i~|~2\leq i\leq t-1,n_i\leq\ell-3\}.$
Let $H'$ be an $(n_{i_0},n_t)$-transformation of $G[R]$.
Clearly, $n_1(H')=n_1=2\ell-3$ and $n_2(H')=\max\{n_2,n_{i_0}+1\}\leq \ell-2$.
By Claim \ref{cla5.2}, $K_2+H'$ is $2C_{\ell}$-free.
However, by \eqref{align.200} and Lemma \ref{lemma3.2}, we have $\rho(K_2+H')>\rho$,
contradicting that $G$ is extremal to ${\rm spex}_{\mathcal{P}}(n,2C_{\ell})$.
So, the claim holds.

Since $n_1=2\ell-3$, $n_i=\ell-2$ for $i\in \{2,\dots,t-1\}$ and $n_t\leq \ell-2$, we can see that $G[R]\cong H(2\ell-3,\ell-2)$.
This completes the proof of Theorem \ref{theorem1.2}.
~~~~~~~~~~~~~~~~~~~~~~~~~~~~~~~~~~~~~~~~~~~~~~~~~~~~~~~~~~$\Box$\\

%According to Theorem \ref{theorem1.2}, we give the proof of Corollary \ref{cor1.1}.
%\begin{proof}
%Theorem \ref{theorem1.4} implies that $K_2+H(3,1)$ is $2\mathcal{C}$-free for $n\geq 5$.
%By Theorem \ref{theorem1.2}, $K_2+H(3,1)$ is the extremal graph to ${\rm spex}_{\mathcal{P}}(n,2C_3)$ for $n\geq 2.16\times 10^{17}$.
%Moreover, we can easily find that a planar graph is $2C_3$-free when it is $2\mathcal{C}$-free.
%It follows that $K_2+H(3,1)$ is the extremal graph to ${\rm spex}_{\mathcal{P}}(n,2\mathcal{C})$ for $n\geq 2.16\times 10^{17}$.
%\end{proof}

\noindent{\bf Proof of Theorem \ref{theorem1.3}.}
It remains the case $\ell\geq 5$.
We first give the following claim.
\begin{claim}\label{cla5.3}
If $H$ is a disjoint union of $t\geq 2$ paths,
then $K_2+H$ is $C_{\ell}$-free if and only if $n_1(H)+n_2(H)\leq \ell\!-\!3$.
\end{claim}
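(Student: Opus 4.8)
The plan is to follow the template of Claim \ref{cla5.2} and reduce the cycle condition to a purely combinatorial packing statement about the path-components of $H$. Write $V(K_2)=\{u',u''\}$. The reduction I would isolate first is the equivalence
\[
K_2+H \text{ contains } C_\ell \iff H \text{ contains two vertex-disjoint paths (one possibly empty) with } \ell-2 \text{ vertices in total}.
\]
Equivalently, the right-hand side says $H$ contains a linear forest with at most two path-components and exactly $\ell-2$ vertices. Granting this, the characterisation in terms of $n_1(H)+n_2(H)$ follows from an elementary extremal count.

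For the reduction, I would argue as in Claim \ref{cla5.2} that, since $H$ is a forest, every cycle of $K_2+H$ must meet $\{u',u''\}$, and then split into cases according to how a putative $C_\ell$ intersects $\{u',u''\}$. If the cycle uses only one of them, deleting that vertex leaves a path on $\ell-1$ vertices inside $H$, which in particular contains a sub-path on $\ell-2$ vertices. If it uses both and they are consecutive on the cycle (so the edge $u'u''$ is traversed), deleting them leaves a single path on $\ell-2$ vertices in $H$. If it uses both but they are non-adjacent on the cycle, deleting them leaves two vertex-disjoint non-empty paths of $H$ whose orders sum to $\ell-2$; here one uses that consecutive cycle-vertices lying in $H$ are joined by an edge of $H$. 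Conversely, given such a linear forest on $\ell-2$ vertices in $H$, I would build a $C_\ell$ explicitly: a single path $v_1\cdots v_{\ell-2}$ yields the cycle $u'v_1\cdots v_{\ell-2}u''u'$ (traversing the edge $u'u''$), while two disjoint paths $v_1\cdots v_a$ and $w_1\cdots w_b$ with $a+b=\ell-2$ yield $u'v_1\cdots v_a u''w_1\cdots w_b u'$. This establishes the displayed equivalence and, in passing, shows the single-vertex case is subsumed.

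It then remains to show that $H$ contains two vertex-disjoint paths with $\ell-2$ vertices in total if and only if $n_1(H)+n_2(H)\ge \ell-2$, which is exactly the negation of the desired condition $n_1(H)+n_2(H)\le \ell-3$. The maximum number of vertices carried by two vertex-disjoint sub-paths of $H$ is precisely $n_1(H)+n_2(H)$, attained by taking the two longest components whole, and every smaller total is realisable by truncating these to sub-paths (take a sub-path of the longest component, and if more vertices are required, adjoin a sub-path of the second-longest). Hence a total of $\ell-2$ is attainable exactly when $\ell-2\le n_1(H)+n_2(H)$, completing the proof. I expect the only genuine subtlety — the main obstacle — to lie in the non-adjacent sub-case together with this realisability step: one must verify both that two disjoint paths may legitimately be drawn from a single long component (so that no case yields a strictly stronger requirement) and that the \emph{prescribed} total $\ell-2$, not merely the maximum, is achievable; both reduce to the elementary observation that any prescribed total at most $n_1(H)+n_2(H)$ can be distributed among sub-paths of the two longest components.
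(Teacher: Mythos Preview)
Your proposal is correct and essentially matches the paper's argument: both reduce the question to identifying which cycle lengths occur in $K_2+H$, and both conclude that these are exactly the integers from $3$ up to $n_1(H)+n_2(H)+2$. The paper compresses your case analysis into the single observation that the longest cycle in $K_2+H$ has order $n_1(H)+n_2(H)+2$ and that $K_2+(P_{n_1(H)}\cup P_{n_2(H)})$ is pancyclic up to that length, whereas you spell out the reduction by tracking how a putative $C_\ell$ meets $\{u',u''\}$; the content is the same.
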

\begin{proof}
One can observe that the longest cycle in $K_2+H$ is of order $n_1(H)+n_2(H)+2$.
Furthermore, $K_2+(P_{n_1(H)}\cup P_{n_2(H)})$ contains a cycle $C_{i}$ for each $i\in \{3,\dots,n_1(H)+n_2(H)+2\}$.
Consequently, $n_1(H)+n_2(H)+2\leq \ell-1$ if and only if $K_2+H$ is $C_{\ell}$-free.
Hence, the claim holds.
\end{proof}

By Claim \ref{cla5.3}, $n_1+n_2 \leq\ell-3$,
 which implies that $n_2\leq \lfloor\frac{\ell-3}{2}\rfloor$ as $n_1\geq n_2$.
By a direct computation, we have $9\times2^{\lfloor\frac{\ell-1}{2}\rfloor}+3\geq \max\{2|V(F)|,9\times2^{n_2+1}+3\}$, and hence
\begin{eqnarray}\label{align.100}
n\geq \max\{2.16\times 10^{17},2|V(F)|,9\times2^{n_2+1}+3\}.
\end{eqnarray}
Since $\ell\geq 5$, we have $\lfloor\frac{\ell-3}{2}\rfloor\geq 1$,
which implies that $\lfloor\frac{\ell-3}{2}\rfloor^2\geq \lfloor\frac{\ell-3}{2}\rfloor$ and $3\lfloor\frac{\ell-3}{2}\rfloor^2\geq 2\lfloor\frac{\ell-3}{2}\rfloor+1\geq \ell-3$.
Consequently, since $n\geq \frac{625}{32}\lfloor\frac{\ell-3}{2}\rfloor^2+2>6\lfloor\frac{\ell-3}{2}\rfloor^2+2$,
we have
\begin{eqnarray}\label{align.30}
n> \Big\lfloor\frac{\ell-3}{2}\Big\rfloor^2+2\Big\lfloor\frac{\ell-3}{2}\Big\rfloor+(\ell-3)+2\geq n_2^2+3n_2+n_1+2
\end{eqnarray}
as  $n_2\leq \lfloor\frac{\ell-3}{2}\rfloor$ and $n_1+n_2 \leq\ell-3$.
On the other hand, $n-2=\sum_{i=1}^{t}n_i\leq n_1+(t-1)n_2$,
which implies that $t\geq \frac{n-2-n_1}{n_2}+1\geq n_2+4$ by \eqref{align.30}.

We first claim that $n_1+n_2=\ell-3$.
Suppose to the contrary that $n_1+n_2 \leq\ell-4$.
Let $H'$ be an $(n_1,n_t)$-transformation of $G[R]$.
Clearly, $n_1(H')=n_1+1\leq \ell-3-n_2$ and $n_2(H')=n_2$.
Then, $n_1(H')+n_2(H')\leq \ell-3$.
By Claim \ref{cla5.3}, $K_2+H'$ is $C_{\ell}$-free.
However, by \eqref{align.100} and Lemma \ref{lemma3.2}, we have $\rho(K_2+H')>\rho$,
contradicting that $G$ is extremal to ${\rm spex}_{\mathcal{P}}(n,C_{\ell})$.
Hence, the claim holds.

Secondly, we claim that $n_i=n_2$ for $i\in \{3,\dots,t-1\}$.
Suppose to the contrary, then set $i_0=\min\{i~|~3\leq i\leq t-1,n_i\leq n_2-1\}$.
Let $H'$ be an $(n_{i_0},n_t)$-transformation of $G[R]$.
Clearly, $n_1(H')=n_1$ and $n_2(H')=\max\{n_2,n_{i_0}+1\}=n_2$,
and so $n_1(H')+n_2(H')=\ell-3$.
By Claim \ref{cla5.3}, $K_2+H'$ is $C_{\ell}$-free.
However, by \eqref{align.100} and Lemma \ref{lemma3.2}, we have $\rho(K_2+H')>\rho$, contradicting that $G$ is extremal to ${\rm spex}_{\mathcal{P}}(n,C_{\ell})$.
Hence, the claim holds.
It follows that $G=K_2+(P_{n_1}\cup(t-2)P_{n_2}\cup P_{n_t})$.

Finally, we claim that $n_2=\lfloor\frac{\ell-3}{2}\rfloor$.
Note that $1\leq n_2\leq \lfloor\frac{\ell-3}{2}\rfloor$.
Then, $n_2=\lfloor\frac{\ell-3}{2}\rfloor$ for $\ell\in \{5,6\}$.
It remains the case $\ell\geq 7$.
Suppose to the contrary, then $n_2\leq \lfloor\frac{\ell-5}{2}\rfloor$ as $n_2\leq \lfloor\frac{\ell-3}{2}\rfloor$.
We use $P^i$ to denote the $i$-th longest path of $G[R]$ for $i\in \{1,\dots,t\}$.
Recall that $t\geq n_2+4$.
Then $P^2,P^3,\dots,P^{n_2+3}$ are paths of order $n_2$.
We may assume that $P^{n_2+3}=w_1w_2\dots w_{n_2}$.
Since $n_1=\ell-3-n_2\geq \lceil\frac{\ell-3}{2}\rceil\geq 2$,
 there exists an endpoint $w'$ of $P^1$ with $w'w''\in E(P^1)$.
Let $G'$ be obtained from $G$ by
(i) deleting $w'w''$ and joining $w'$ to an endpoint of $P^{n_2+2}$;
(ii) deleting all edges of $P^{n_2+3}$ and  joining $w_i$ to an  endpoint of $P^{i+1}$ for each $i\in \{1,\dots,n_2\}$.
Then, $G'$ is obtained from $G$ by deleting $n_2$ edges and adding $n_2+1$ edges.
By Claim \ref{Claim3.2}, we obtain
$$\frac{4}{\rho^2} < x_{u_i}x_{u_{j}}<\frac{4}{\rho^2}+\frac{24}{\rho^3}+\frac{36}{\rho^4}<\frac{4}{\rho^2}+\frac{25}{\rho^3}$$
for any vertices $u_i,u_{j}\in R$.
Then $$\rho(G')-\rho\geq \frac{X^{\mathrm{T}}(A(G')-A(G))X}{X^{\mathrm{T}}X}>\frac{2}{X^{\mathrm{T}}X}\left(\frac{4(n_2+1)}{\rho^2}-\frac{4n_2}{\rho^2}-\frac{25n_2}{\rho^3}\right)>0,$$
where $n_2<\lfloor\frac{\ell-3}{2}\rfloor\leq \frac{4}{25}\sqrt{2n-4}\leq \frac{4}{25}\rho$ as $n\geq \frac{625}{32}\lfloor\frac{\ell-3}{2}\rfloor^2+2$. So, $\rho(G')>\rho$.
On the other hand,  $G'\cong K_2+(P_{n_1-1}\cup(n_2+1)P_{n_2+1}\cup(t-n_2-4)P_{n_2}\cup P_{n_t})$.
By Claim \ref{cla5.3}, $G'$ is $C_{\ell}$-free,
contradicting that $G$ is extremal to ${\rm spex}_{\mathcal{P}}(n,C_{\ell})$.
So, the claim holds.

Since $n_1+n_2=\ell-3$ and $n_2=\lfloor\frac{\ell-3}{2}\rfloor$, we have $n_1=\lceil\frac{\ell-3}{2}\rceil$.
Moreover, since $n_i=\lfloor\frac{\ell-3}{2}\rfloor$ for $i\in \{2,\dots,t-1\}$ and $n_t\leq n_2$,
we can further obtain that $G[R]\cong H(\lceil\frac{\ell-3}{2}\rceil,\lfloor\frac{\ell-3}{2}\rfloor)$,
which implies that $G\cong K_2+H(\lceil\frac{\ell-3}{2}\rceil,\lfloor\frac{\ell-3}{2}\rfloor)$.
This completes the proof of Theorem \ref{theorem1.3}.
~~~~~~~~~~~~~~~~~~~~~~~~~~~~~~~~~~$\Box$\\

\section{Proof of Theorem \ref{theorem1.4}}
Above all, we shall introduce the \emph{Jordan Curve Theorem}:
any simple closed curve $C$ in the plane partitions the rest of the plane into two
disjoint arcwise-connected open sets (see \cite{Bondy}, P. 244).
The corresponding two open sets are called the interior and the exterior of $C$.
We denote them by $int(C)$ and $ext(C)$,
and their closures by $Int(C)$  and $Ext(C)$, respectively.
A \emph{plane graph} is a planar embedding of a planar graph.
The Jordan Curve Theorem gives the following lemma.

\begin{lem}\label{lemma4.1}
Let $C$ be a cycle of a plane graph $G$, and let $x,y$ be two vertices of $G$ with $x\in int(C)$ and $y\in ext(C)$, then $xy\notin E(G)$.
\end{lem}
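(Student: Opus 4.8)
The statement to prove is Lemma~\ref{lemma4.1}, a direct consequence of the Jordan Curve Theorem that has just been quoted.

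The plan is to argue by contradiction using the topological dichotomy that the Jordan Curve Theorem provides. First I would observe that since $G$ is a \emph{plane} graph, the cycle $C$ is realized as a simple closed curve in the plane; applying the Jordan Curve Theorem, the complement of $C$ splits into the two disjoint arcwise-connected open regions $int(C)$ and $ext(C)$. The hypothesis places $x\in int(C)$ and $y\in ext(C)$, so $x$ and $y$ lie in different components of the plane minus $C$.

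Next, suppose for contradiction that $xy\in E(G)$. In the plane embedding this edge is a simple arc (a Jordan arc) whose endpoints are $x$ and $y$, and which, being an edge of the plane graph distinct from the edges of $C$, meets $C$ in no interior point. Such an arc is a connected subset of the plane joining a point of $int(C)$ to a point of $ext(C)$ while avoiding $C$ entirely. This contradicts the defining property of the Jordan curve decomposition: any connected set meeting both $int(C)$ and $ext(C)$ must intersect the separating curve $C$. Hence no such edge can exist, and $xy\notin E(G)$, as claimed.

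I expect the only genuinely delicate point to be the justification that the drawn edge $xy$ does not cross $C$. This is where planarity is used essentially: in a plane graph edges intersect only at common endpoints, so the open arc representing $xy$ is disjoint from every edge of $C$, and an endpoint $x$ or $y$ cannot lie on $C$ since both are assumed to lie strictly in $int(C)$ respectively $ext(C)$. Once this crossing-freeness is secured, the contradiction with the Jordan Curve Theorem is immediate and requires no computation. The argument is short precisely because all the topological content is imported from the Jordan Curve Theorem stated above.
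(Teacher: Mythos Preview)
Your argument is correct and matches the paper's approach: the paper does not give an explicit proof of this lemma but simply states that it follows from the Jordan Curve Theorem, which is exactly what you carry out in detail.
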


Let $G$ be a plane graph.
A face in $G$ of size $i$ is called an $i$-face.
Let $f_i(G)$ denote the number of $i$-faces in $G$, and let $f(G)$ denote $\sum_{i}f_i(G)$.

\begin{lem}\label{lemma4.2}\emph{(Proposition 2.5 of \cite{Bondy}, P. 250)}
Let $G$ be a planar graph, and let $f$ be an arbitrary face in some planar
embedding of $G$. Then $G$ admits a planar embedding whose outer face has the same
boundary as $f$.
\end{lem}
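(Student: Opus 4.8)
The plan is to lift the given planar embedding onto the sphere, where no face is singled out as ``outer,'' apply a rotation of the sphere that moves the prescribed face into the position corresponding to the unbounded region, and then project back to the plane. Concretely, fix the stereographic projection $\pi\colon S^2\setminus\{N\}\to\mathbb{R}^2$ from the north pole $N$; this is a homeomorphism. Given a planar embedding of $G$ in $\mathbb{R}^2$, its preimage $\pi^{-1}(G)$ is an embedding of $G$ on the punctured sphere $S^2\setminus\{N\}\subset S^2$. Each bounded face of the planar embedding maps homeomorphically to a face of the spherical embedding, while the unbounded outer face corresponds to the spherical face containing $N$. In every case the facial boundary walk is preserved, so on $S^2$ every face is a disk bounded by the same walk as the corresponding planar face, and the outer face is no longer combinatorially distinguished.

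Next I would choose a point $p$ in the interior of the spherical face corresponding to $f$; in particular $p\notin\pi^{-1}(G)$. Since $S^2$ is homogeneous, there is a homeomorphism (indeed a rotation) $\varphi\colon S^2\to S^2$ with $\varphi(p)=N$. As $\varphi$ is injective and $p\notin\pi^{-1}(G)$, the point $N=\varphi(p)$ does not lie on $\varphi(\pi^{-1}(G))$, so $\varphi(\pi^{-1}(G))$ is again an embedding of $G$ on $S^2\setminus\{N\}$. Being a homeomorphism of $S^2$, the map $\varphi$ carries the spherical embedding to a combinatorially identical one, with the sole change that $N$ now lies in the interior of the face corresponding to $f$.

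Finally I would project back by $\pi$ to obtain the planar embedding $\pi(\varphi(\pi^{-1}(G)))$ of $G$ in $\mathbb{R}^2$. Because $N$ now sits inside the spherical face associated to $f$, that face maps to the unbounded region of the plane, so the new outer face is exactly $f$ and all other faces map to bounded faces. Since every map in the composition is a homeomorphism preserving facial walks, the boundary of the new outer face is precisely the boundary walk of $f$, as required; the argument needs no connectivity assumption on $G$, since neither $\pi$ nor $\varphi$ uses it. The hard part is making rigorous the assertion that each of these homeomorphisms preserves the combinatorial embedding, i.e., carries faces to faces with identical boundary walks. This follows because a homeomorphism of $S^2$ sends the complement of $G$'s image homeomorphically onto the complement of the new image, hence matches their connected components (the faces) bijectively, and, tracing continuously along each facial boundary, matches the corresponding closed bounding walks.
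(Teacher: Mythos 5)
Your proof is correct, and it is essentially the standard argument: the paper itself gives no proof of this lemma, quoting it as Proposition 2.5 of Bondy and Murty, whose proof is exactly your stereographic-projection-and-rotation argument (lift to $S^2$, move a point of the face corresponding to $f$ to the projection point, project back). Your added care about homeomorphisms preserving faces and facial boundary walks is sound and matches the textbook treatment.
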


Let $\delta(G)$ be the minimum degree of a graph $G$.
It is well known that every graph $G$ with $\delta(G)\geq 2$ contains a cycle.
In the following, we give a more delicate characterization on planar graphs,
which contains an important structural information of the extremal graphs
in Theorem \ref{theorem1.4}.

\begin{lem}\label{lemma4.3}
Let $G$ be a plane graph on $n$ vertices with $\delta(G)\geq 3$.
Then $G$ contains two vertex-disjoint cycles unless $G\in \{2K_1+C_3,K_1+C_{n-1}\}$.
\end{lem}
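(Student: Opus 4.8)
The plan is to split the argument by the value of $n$. For $n\le 5$ a graph simply cannot contain two vertex-disjoint cycles, since that would require at least six vertices; so here it suffices to enumerate all planar graphs with $\delta(G)\ge 3$ on at most five vertices and verify each lies in the exceptional list. Combining $e(G)\le 3n-6$ with $2e(G)=\sum_v\deg(v)\ge 3n$ forces $e(G)\in\{8,9\}$ when $n=5$ and $e(G)=6$ when $n=4$; a short degree-sequence analysis then shows the only graphs are $K_4=K_1+C_3$ (on four vertices) and, on five vertices, the unique triangulation $2K_1+C_3$ (nine edges) and the wheel $W_4=K_1+C_4$ (eight edges). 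All three are of the stated form, so the lemma holds for $n\le 5$; note in particular that $2K_1+C_3$ arises only for $n=5$.

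For $n\ge 6$ I would prove the sharper statement: if $G$ has no two vertex-disjoint cycles then $G\cong K_1+C_{n-1}$. First reduce to the $2$-connected case. If $G$ is disconnected, each component already has $\delta\ge 3$ and hence a cycle, giving two disjoint cycles. If $G$ is connected but has a cut vertex, it has at least two endblocks; every non-cut vertex of an endblock keeps all its (at least three) incident edges inside that block, so $B-c_B$ is connected with minimum degree at least $2$ and thus contains a cycle avoiding the cut vertex $c_B$, and two distinct endblocks then yield two vertex-disjoint cycles. Hence $G$ is $2$-connected. Next I record the two facts that drive everything: for every cycle $C$ the graph $G-V(C)$ is a forest (otherwise a cycle of $G-V(C)$ together with $C$ are disjoint), and \emph{no} single vertex $h$ can meet all cycles, since $G-h$ being a forest would force a leaf $u$ with $\deg_G(u)\le 2$, contradicting $\delta(G)\ge 3$. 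Thus $G$ is not of ``feedback-vertex'' type, and the wheel is the only candidate shape.

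To pin down the wheel I would take a longest cycle $C=v_1v_2\cdots v_k$ and set $F=G-V(C)$, which is a forest. Fixing a planar embedding (and using Lemma \ref{lemma4.2} to place a chosen face outermost when convenient), the Jordan Curve Theorem (Lemma \ref{lemma4.1}) splits $V(F)$ into interior and exterior vertices of $C$ with no edges across. The goal of this step is to show that $F$ consists of a single vertex $h$, that $h$ is adjacent to every $v_i$, and that $C$ is chordless, whence $k=n-1$ and $G=K_1+C_{n-1}$. The main obstacle, and where planarity is essential, is excluding every other attachment pattern: if $F$ contained a leaf $z$ with two neighbours spanning a long arc of $C$, or if $C$ carried a suitably placed chord, one reroutes along the shorter arc to build a second cycle disjoint from a residual cycle on the complementary arc, contradicting the maximality of $C$; Lemma \ref{lemma4.1} is what confines these configurations to the wheel, and planarity simultaneously rules out the nonplanar shapes ($K_5$ and $K_{3,t}$ with $t\ge 3$) that otherwise admit no two disjoint cycles. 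I expect this last configuration analysis—systematically forbidding chords and multi-vertex components of $F$ without producing a disjoint pair—to be the technical heart of the proof.

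\medskip
\noindent\emph{Remark.} A shorter route, if permitted, is to invoke Lov\'asz's characterization of graphs with no two vertex-disjoint cycles; its feedback-vertex case is killed by $\delta(G)\ge 3$ (as above), its $K_5$ and $K_{3,t}$ ($t\ge 3$) cases by planarity, and the surviving cases collapse exactly to $K_1+C_{n-1}$ and the small graph $2K_1+C_3$.
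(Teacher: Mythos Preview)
Your treatment of $n\le 5$ and the reduction to 2-connected graphs are correct and essentially what the paper does. The main argument for $n\ge 6$, however, is only a plan, and as stated it contains a concrete error: in the wheel $K_1+C_{n-1}$ the longest cycle is Hamiltonian (take $h v_1 v_2\cdots v_{n-1}h$), so the forest $F=G-V(C)$ is \emph{empty}, not a single vertex. Thus your stated endgame --- show $F=\{h\}$, $h$ adjacent to all of $C$, and $C$ chordless --- does not describe the target graph; the wheel would instead have to be recognised in the Hamiltonian case by showing that all chords of $C$ share a common endpoint. You explicitly defer the required configuration analysis as the ``technical heart'', so the proposal (apart from the Lov\'asz remark) is not yet a proof.

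The paper's route is quite different and explains why the case analysis becomes tractable. Rather than a longest cycle, it takes a \emph{face} $C$ of minimum size in the 2-connected plane graph, re-embeds so that $C$ is the outer face (Lemma~\ref{lemma4.2}), and uses Euler's formula together with $\delta(G)\ge 3$ to force $|C|\le 5$; a short Jordan-curve argument then rules out $|C|\in\{4,5\}$, leaving $C$ a triangle. With $G_1=G-V(C)$ acyclic, the proof classifies attachments via $B_i=\{w\in V(G_1):d_C(w)=i\}$, shows $|B_3|\le 1$ and $|B_2|+|B_3|\ge 2$, and finishes by a finite case split on $(|B_3|,|B_2|)$. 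The advantage over a longest-cycle approach is that $C$ has only three vertices, so the attachment patterns are few and explicit.

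Your closing Lov\'asz remark, by contrast, is essentially a complete black-box proof: the feedback-vertex class dies because a forest $G-h$ on $n-1\ge 3$ vertices has a vertex of degree at most $1$; $K_5$ and $K_{3,t}$ with $t\ge 3$ are nonplanar; and the surviving $K_{3,t}$-plus-edges graphs with $\delta\ge 3$ and $t\le 2$ are exactly $K_4$, $K_1+C_4$, and $2K_1+C_3$, all in the exceptional list alongside the wheels. The paper, however, gives a self-contained argument and does not invoke Lov\'asz.
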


\begin{proof}
We first deal with some trivial cases.
Since $\delta(G)\geq 3$, we have $n\geq 1+\delta(G)\geq 4$.
If $n=4$, then $G\cong K_1+C_3$.
If $n=5$, then $2e(G)=\sum_{v\in V(G)}d_G(v)\geq3\times5=15$, and so $e(G)\geq 8$.
On the other hand, $e(G)\le 3n-6=9$, since $G$ is planar.
Thus, $e(G)\in\{8,9\}$.
It is not hard to verify that $G\cong 2K_1+C_3$ when $e(G)=9$ and $G\cong K_1+C_4$ when $e(G)$=8, as desired.
If $G$ is not connected, then $G$ contains at least two components $G_1$ and $G_2$
with $\delta(G_i)\geq 3$ for $i\in\{1,2\}$,
which implies that each $G_i$ contains a cycle.
Thus, $G$ contains two vertex-disjoint cycles, as desired.
If $G$ has a cut vertex $v$, then $G-\{v\}$ has at least two components $G_3$ and $G_4$.
Since $\delta(G)\geq 3$, we have $\delta(G_i)\geq 2$ for $i\in\{3,4\}$,
which implies that both $G_3$ and $G_4$ contain a cycle.
Thus, $G$ also contains two vertex-disjoint cycles.

Next, we only need to consider the case that $G$ is a 2-connected graph of order $n\geq 6$.
Since $G$ is 2-connected, each face of $G$ is a cycle.
Let $C$ be a face of $G$ with minimum size $g$.
By Lemma \ref{lemma4.2},
we may assume without loss of generality that $C$ is the outer face of $G$.
Let $G_1=G-V(C)$. If $G_1$ contains a cycle,
then $G$ contains two vertex-disjoint cycles, as desired.
Now assume that $G_1$ is acyclic.
Since $\delta(G)\geq 3$, we have $2e(G)=\sum_{v\in V(G)}d_G(v)\geq 3n$.
This, together with Euler's formula $n-2=e(G)-f(G)$, gives $e(G)\le 3f(G)-6$.
On the other hand,
\begin{align*}
2e(G)=\sum_{i\geq g}if_i(G)\geq g\sum_{i\geq g}f_i(G)=gf(G).
\end{align*}
Hence, $gf(G)\le 2e(G)\le 6f(G)-12$, yielding $g\le \frac{6f(G)-12}{f(G)}<6$.
Subsequently, we shall give several claims.

\begin{claim}\label{claim4.1}
We have $g=3$.
\end{claim}

\begin{figure}[!ht]
	\centering
	\includegraphics[width=0.6\textwidth]{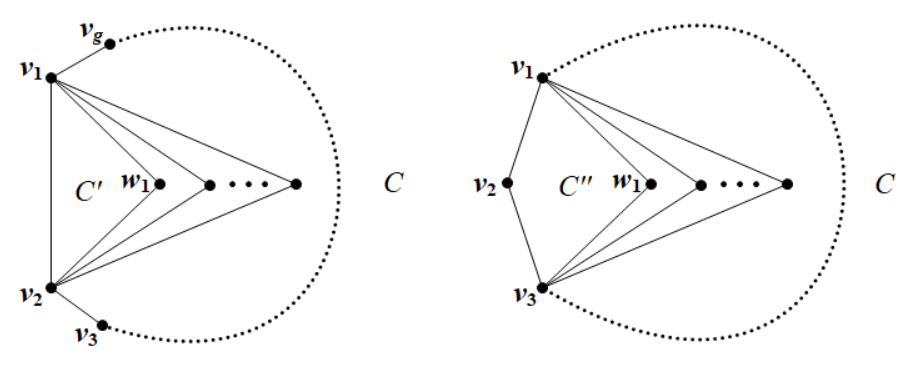}
	\caption{Two possible local structures of $G$. }{\label{fig.-01}}
\end{figure}

\begin{proof}
Suppose to the contrary that $g\in \{4,5\}$, and let $C=v_1v_2\ldots v_gv_1$.
We first consider the case that there exists a vertex of $G_1$ adjacent to two consecutive vertices of $C$.
Without loss of generality, let $w_1\in V(G_1)$ and $\{w_1,v_1,v_2\}$ induces a triangle $C'$.
More generally, we define $A=\{w\in V(G_1)~|~ v_1,v_2 \in N_C(w)\}$.
Clearly, $w_1\in A$. We can select a vertex, say $w_1$, in $A$
such that $A\subseteq Ext(C')$ (see Figure \ref{fig.-01}).
Notice that $C'$ is not a face of $G$, as $g\in \{4,5\}$.
Then, $int(C')\neq \varnothing$. By Lemma \ref{lemma4.1},
every vertex in $int(C')$ has no neighbors in $ext(C')$.
Moreover, by the definitions of $A$ and $w_1$,
every vertex in $int(C')$ has at most one neighbor in $\{v_1,v_2\}$.
It follows that every vertex in $int(C')$ has at least one neighbor in $int(C')$,
as $\delta(G)\geq3$.
Thus, $G[int(C')]$ is nonempty, that is, $G_1[int(C')]$ is nonempty.
Recall that $G_1$ is acyclic. Then $G_1[int(C')]$ contains at least two pendant vertices,
one of which (say $w_2$) is not adjacent to $w_1$.
Hence, $w_2$ is also a pendant vertex of $G_1$, as $w_2$ has no neighbors in $ext(C')$.
On the other hand, $w_2$ has at most one neighbor in $\{v_1,v_2\}$,
and so $d_C(w_2)\leq1$.
Therefore,
$d_{G}(w_2)=d_{G_1}(w_2)+d_C(w_2)\leq2$, contradicting $\delta(G)\geq3$.

Now it remains the case that
each vertex of $G_1$ is not adjacent to two consecutive vertices of $C$.
Note that $\delta(G)\geq 3$ and $G_1$ is acyclic.
Then $G_1$ contains a vertex $w_0$ with $d_{G_1}(w_0)\le 1$, and thus $d_C(w_0)= d_G(w_0)-d_{G_1}(w_0)\geq 2$.
Now, since $g\in \{4,5\}$,
we may assume without loss of generality that $v_1,v_3\in N_C(w_0)$.
Let $A'=\{w\in V(G_1)~|~ v_1,v_3 \in N_C(w)\}$.
Clearly, $w_0\in A'$ and $v_2\notin N_C(w)$ for each $w\in A'$.
Now, we can select a vertex, say $w_1$, in $A'$
such that $A'\subseteq Ext(C'')$,
where $C''=w_1v_1v_2v_3w_1$ (see Figure \ref{fig.-01}).
We can see that $int(C'')\neq\varnothing$ (otherwise, $d_G(v_2)=|\{v_1,v_3\}|=2$,
a contradiction).
By the definition of $w_1$, we have $int(C'')\cap A'=\varnothing$.
Furthermore, every vertex in $int(C'')$ has no neighbors in $ext(C'')$
and has at most one neighbor in $\{v_1,v_2,v_3\}.$
Thus, every vertex in $int(C'')$ has at least one neighbor in $int(C'')$.
By a similar argument as above, we can find a vertex $w_2\in int(C'')$ with $d_{G}(w_2)=d_{G_1}(w_2)+d_C(w_2)\leq2$,
which contradicts $\delta(G)\geq3$.
\end{proof}

By Claim \ref{claim4.1}, the outer face of $G$ is a triangle $C=v_1v_2v_3v_1$.
In the following, we denote $B_i=\{w\in V(G_1)~|~d_C(w)=i\}$ for $i\leq3$.
Since $\delta(G)\geq3$, we have $w\in B_3$ for each isolated vertex $w$ of $G_1$,
and $w\in B_2\cup B_3$ for each pendant vertex $w$ of $G_1$.

\begin{claim}\label{claim4.2}
$|B_3|\le 1$ and $|B_2|+|B_3|\geq 2$.
\end{claim}

\begin{proof}
Since $C$ is the outer face of $G$, every vertex of $G_1$ lies in $int(C)$.
Furthermore, since $G$ is planar, it is easy to see that $|B_3|\le 1$.
This implies that $G_1$ contains at most one isolated vertex.
Recall that $|G_1|=n-3\geq3$ and $G_1$ is acyclic. Then
$G_1$ contains at least two pendant vertices $w_1$ and $w_2$.
Therefore, $|B_2|+|B_3|\geq |\{w_1,w_2\}|=2$.
\end{proof}

\begin{claim}\label{claim4.3}
Let $w_0,w_1$ be two vertices in $V(G_1)$ such that
$N_C(w_0)\supseteq \{v_3\}$ and $N_C(w_1)\supseteq\{v_1,v_2\}$ (see Figure \ref{fig.-02}).
Then \\
(i) $v_3,w_0\in ext(C''')$, where $C'''=w_1v_1v_2w_1$;\\
(ii) if $w_0\notin B_3$,
then $G_1$ contains a pendant vertex in $ext(C''')$.
\end{claim}

\begin{figure}[!ht]
	\centering
	\includegraphics[width=0.2\textwidth]{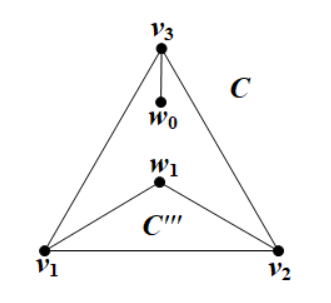}
	\caption{A local structure in Claim \ref{claim4.3}. }{\label{fig.-02}}
\end{figure}

\begin{proof}
(i) Since $C$ is the outer face and $v_3\in V(C)\setminus V(C''')$,
we have $v_3\in ext(C''')$.
Furthermore, using $w_0v_3\in E(G)$ and Lemma \ref{lemma4.1} gives $w_0\in ext(C''')$.

(ii) Since $w_0\notin B_3$, we have $d_C(w_0)\le 2$, and so
$d_{G_1}(w_0)=d_G(w_0)-d_C(w_0)\geq 1$.
By (i), we know that $w_0\in ext(C''')$.
If $d_{G_1}(w_0)=1$, then $w_0$ is a desired pendant vertex.
It remains the case that $d_{G_1}(w_0)\geq2$.
Now, whether $w_1$ is a neighbor of $w_0$ or not,
$w_0$ has at least one neighbor in $V(G_1)\cap ext(C''')$.
Thus, $G_1[ext(C''')]$ is nonempty.
Recall that $G_1$ is acyclic. Then $G_1[ext(C''')]$ contains at least two pendant vertices,
one of which (say $w_2$) is not adjacent to $w_1$.
Hence, $w_2$ is also a pendant vertex of $G_1$, as $w_2$ has no neighbors in $int(C''')$.
\end{proof}

\begin{figure}[!ht]
	\centering
	\includegraphics[width=0.45\textwidth]{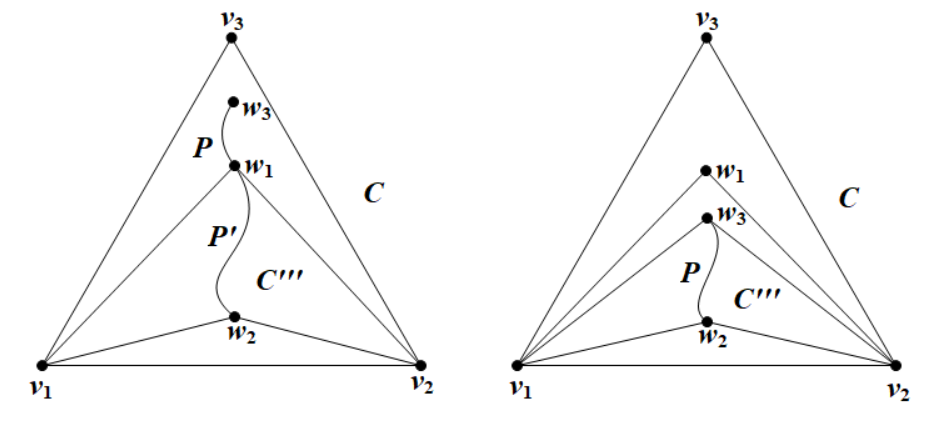}
	\caption{Two possible local structures in Claim \ref{claim4.4}. }{\label{fig.-03}}
\end{figure}

\begin{claim}\label{claim4.4}
Let $w_1,w_2\in V(G_1)$ with $N_{C}(w_1)\cap N_{C}(w_2)\supseteq \{v_1,v_2\}$.
Assume that $C'''=w_1v_1v_2w_1$ and $w_2\in int(C''')$ (see Figure \ref{fig.-03}).
Then $G$ contains a cycle $C_{v_i}$ such that $V(C_{v_i})\subseteq Int(C''')$ and $V(C_{v_i})\cap V(C)=\{v_i\}$ for each $i\in\{1,2\}$.
\end{claim}

\begin{proof}
We first claim that $N_{C}(w_2)=\{v_1,v_2\}$.
By Claim \ref{claim4.3}, we know that $v_3\in ext(C''')$.
Now, since $w_2\in int(C''')$, we have $w_2v_3\notin E(G)$ by Lemma \ref{lemma4.1},
and so $N_{C}(w_2)=\{v_1,v_2\}$.
Furthermore, we have $d_{G_1}(w_2)\geq 1$.
Then $G_1$ contains a path $P$ with endpoints $w_2$ and $w_3$, where $w_3$ is a pendant vertex of $G_1$.
If $V(P)\not\subseteq int(C''')$,
then by $w_2\in int(C''')$ and Lemma \ref{lemma4.1}, we have $V(P)\cap V(C''')=\{w_1\}$ as $v_1,v_2\notin V(G_1)$.
Now let $P'$ be the subpath of $P$ with endpoints $w_2$ and $w_1$.
Then $V(P')\setminus\{w_1\}\subseteq int(C''')$,
and $G$ contains a cycle $C(v_i)=v_iw_1P'w_2v_i$ for each $i\in\{1,2\}$, as desired.
Next, assume that $V(P)\subseteq int(C''')$.
Then, $w_3\in int(C''')$.
By $v_3\in ext(C''')$ and Lemma \ref{lemma4.1}, we get that $w_3v_3\notin E(G)$, and so $w_3\notin B_3$.
Moreover, $d_{G_1}(w_3)=1$ and $\delta(G)\geq3$ give $w_3\in B_2$.
Thus, $N_C(w_3)=\{v_1,v_2\}$.
Therefore, $G$ contains a cycle $C_{v_i}=v_iw_2Pw_3v_i$ for each $i\in\{1,2\}$, as desired.
\end{proof}

Having above four claims, we are ready to give the final proof of Lemma \ref{lemma4.3}.
By Claim \ref{claim4.2}, we have $|B_3|\leq1$ and $|B_2|\geq 1$.
We may without loss of generality that $w_1\in B_2$ and $N_{C}(w_1)=\{v_1,v_2\}$.
For each $i\in \{1,2\}$, let $\overline{i}\in \{1,2\}\setminus \{i\}$.
Since $d_{C}(w_1)=2$, we have $d_{G_1}(w_1)\geq 1$.
Hence, $G_1$ is nonempty, and so $G_1$ contains at least two pendant vertices.
According to the size of $B_3$, we now distinguish two cases to complete the proof.

\vspace{2mm}
\noindent{\bf{Case 1.}} $|B_3|= 1$.

Assume that $B_3=\{w_0\}$. Then $N_C(w_0)=\{v_1,v_2,v_3\}$ (see Figure \ref{fig.-04}).
We then consider two subcases according to the size of $B_2$.

\begin{figure}[!ht]
	\centering
	\includegraphics[width=0.7\textwidth]{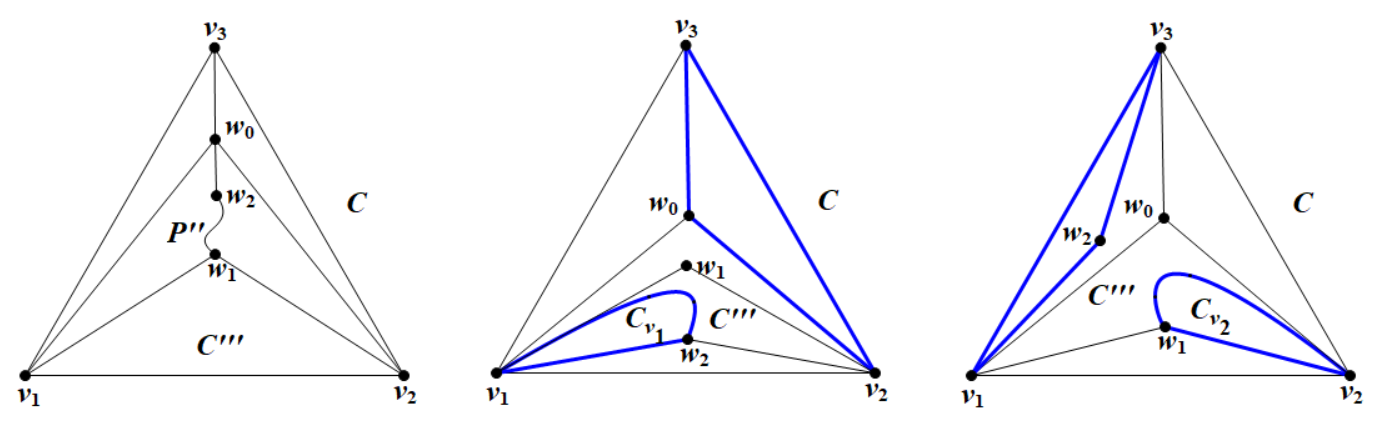}
	\caption{Three possible structures in Case 1. }{\label{fig.-04}}
\end{figure}

\noindent{\bf{Subcase 1.1.}} $|B_2|=1$, that is, $B_2=\{w_1\}$.

For each pendant vertex $w$ of $G_1$,
we have $d_C(w)=d_G(w)-d_{G_1}(w)\geq2$, consequently, $w\in B_2\cup B_3=\{w_1,w_0\}$.
This indicates that $G_1$ contains exactly two pendant vertices $w_1$ and $w_0$.
Furthermore, we can see that $G_1$ contains no isolated vertices
(otherwise, every isolated vertex of $G_1$ has at least three neighbors in $V(C)$ and so belongs to $B_3$,
while the unique vertex $w_0\in B_3$ is a pendant vertex of $G_1$).
Therefore, $G_1$ is a path of order $n-|C|$ with endpoints $w_1$ and $w_0$.

Now we know that $G_1$ is a path with $|G_1|=n-3\geq3$.
Let $N_{G_1}(w_0)=\{w_2\}$ and $P''=G_1-\{w_0\}$.
Then $P''$ is a path with endpoints $w_1$ and $w_2$.
Since $d_{G_1}(w_2)=2$, we have $d_{C}(w_2)\geq 1$.
If $w_2v_3\in E(G)$,
then $G$ contains two vertex-disjoint cycles $v_{3}w_0w_2v_{3}$ and $w_1v_{1}v_{2}w_1$, as desired.
If $w_2v_i\in E(G)$ for some $i\in \{1,2\}$,
then $G$ contains two vertex-disjoint cycles $v_iw_1P''w_2v_i$ and $w_0v_{\overline{i}}v_3w_0$, as desired.

\noindent{\bf{Subcase 1.2.}} $|B_2|\geq 2$.

Let $w_2\in B_2\setminus \{w_1\}$.
If $N_{C}(w_1)=N_{C}(w_2)$, then we may assume that $w_2\in int(C''')$
by the symmetry of $w_1$ and $w_2$, where $C'''=w_1v_1v_2w_1$.
By Claim \ref{claim4.4},
$G$ contains a cycle $C_{v_1}$ such that $V(C_{v_1})\subseteq Int(C''')$ and $V(C_{v_1})\cap V(C)=\{v_1\}$.
On the other hand,
Claim \ref{claim4.3} implies that $w_0\in ext(C''')$. Hence, $w_0\notin V(C_{v_1})$.
Therefore, $G$ contains two vertex-disjoint cycles $C_{v_1}$ and $w_0v_2v_3w_0$, as desired.

It remains the case that $N_{C}(w_1)\neq N_{C}(w_2)$.
Now $N_{C}(w_2)=\{v_i,v_3\}$ for some $i\in\{1,2\}$.
We define $C'''=w_0v_1v_2w_0$ instead of the original one in Claim \ref{claim4.4}.
Then $w_1\in int(C''')$. Moreover, $w_2\in ext(C''')$ as $w_2v_3\in E(G)$.
By Claim \ref{claim4.4}, there exists a cycle $C_{v_{\overline{i}}}$
such that $V(C_{v_{\overline{i}}})\subseteq Int(C''')$ and $V(C_{v_{\overline{i}}})\cap V(C)=\{v_{\overline{i}}\}$.
Therefore, $G$ contains two vertex-disjoint cycles $C_{v_{\overline{i}}}$ and $w_2v_iv_3w_2$, as desired.

\vspace{2mm}
\noindent{\bf{Case 2.}} $|B_3|=0$.

Recall that $A=\{w\in V(G_1)~|~ v_1,v_2\in N_C(w)\}$.
Since $|B_3|=0$, we can see that $N_C(w)=N_C(w_1)=\{v_1,v_2\}$ for each $w\in A$.
We may assume without loss of generality that $A\subseteq Int(C''')$
by the symmetry of vertices in $A$.
By Claim \ref{claim4.3}, there exists a pendant vertex $w_3$ of $G_1$ in $ext(C''')$,
which implies that $d_C(w_3)\geq2$.
Since $|B_3|=0$, we have $w_3\in B_2$, and thus $B_2\supseteq\{w_1,w_3\}$.
Moreover, $w_3\notin A$ as $A\subseteq Int(C''')$.
Assume without loss of generality that $N_{C}(w_3)=\{v_1,v_3\}$ (see Figure \ref{fig.-05}).
We also consider two subcases according to
$|B_2|$.

\begin{figure}[!ht]
	\centering
	\includegraphics[width=0.7\textwidth]{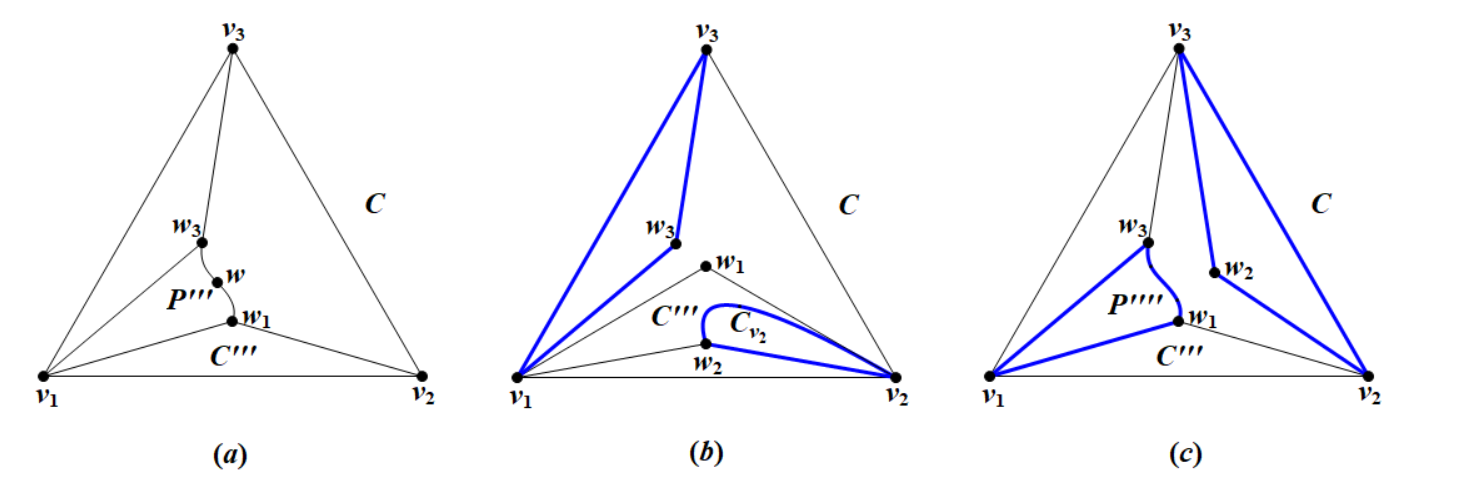}
	\caption{Three possible structures in Case 2.}{\label{fig.-05}}
\end{figure}

\noindent{\bf{Subcase 2.1.}}  $|B_2|=2$, that is, $B_2=\{w_1,w_3\}$.

Since $d_C(w_3)=2$, we have $d_{G_1}(w_3)\geq1$, which implies that
$G_1$ is non-empty and has at least two pendant vertices.
On the other hand, since $\delta(G)\geq3$ while $B_3=\varnothing$,
we can see that $G_1$ contains no isolated vertices,
and $w\in B_2=\{w_1,w_3\}$ for each pendant vertex $w$ of $G_1$.
Therefore, $G_1$ contains exactly two pendant vertices $w_1$ and $w_3$,
more precisely, $G_1$ is a path with endpoints $w_1$ and $w_3$.
Let $w$ be an arbitrary vertex in $V(G_1)\setminus \{w_1,w_3\}$.
Then, $d_C(w)=d_G(w)-d_{G_1}(w)=d_G(w)-2\geq 1.$

If $wv_2\in E(G)$, then $G$ contains two vertex-disjoint cycles $v_2w_1P'''wv_2$ and $v_1w_3v_3v_1$,
where $P'''$ is the subpath of $G_1$ from $w_1$ to $w$ (see Figure \ref{fig.-05}($a$)).
If $wv_3\in E(G)$, then $G$ contains two vertex-disjoint cycles $v_3w_3P'''wv_3$ and $v_1w_1v_2v_1$,
where $P'''$ is the subpath of $G_1$ from $w_3$ to $w$  (see Figure \ref{fig.-05}($a$)).
If $N_C(w)=\{v_1\}$ for each $w\in V(G_1)\setminus \{w_1,w_3\},$
then $G\cong K_1+C_{n-1}$, as desired.

\vspace{2mm}
\noindent{\bf{Subcase 2.2.}}   $|B_2|\geq 3$.

For each vertex $w\in B_2$,
it is clear that $N_C(w)$ is one of $\{v_1,v_2\}$, $\{v_1,v_3\}$ and $\{v_2,v_3\}$.
We first consider the case that there exist two vertices in $B_2$ which have the same neighbors in $C$.
Without loss of generality, assume that we can find a vertex $w_2\in B_2$ with $N_C(w_2)=N_C(w_1)=\{v_1,v_2\}$.
Then $w_2\in A$. Recall that $A\subseteq Int(C''')$ and $C'''=w_1v_1v_2w_1$ (see Figure \ref{fig.-05}($b$)).
Then, we can further get that $w_2\in int(C''')$.
By Claim \ref{claim4.4},
there exists a cycle $C_{v_2}$ such that $V(C_{v_2})\subset Int(C''')$ and
$V(C_{v_2})\cap V(C)=\{v_2\}$.
On the other hand,
Claim \ref{claim4.3} implies that $w_3\in ext(C''')$. Hence, $w_3\notin V(C_{v_2})$.
Therefore, $G$ contains two vertex-disjoint cycles $C_{v_2}$ and $w_3v_1v_3w_3$, as desired.

Now it remains the case that
any two vertices in $B_2$ have different neighborhoods in $C$.
This implies that $|B_2|=3$ and we can find a vertex $w_2\in B_2$ with $N_{C}(w_2)=\{v_2,v_3\}$.
Now we have $B_2=\{w_1,w_2,w_3\}$.
Furthermore,
since $\delta(G)\geq 3$ and $B_3=\varnothing$, we have $d_{G_1}(w)\geq1$ for each $w\in V(G_1)$,
and if $d_{G_1}(w)=1$, then $w\in B_2$.
Since $|B_2|=3$, we can see that $G_1$ has only one connected component,
that is, $G_1$ is a tree and some $w_i$, say $w_2$, is a pendant vertex of $G_1$.
Now, $G_1-\{w_2\}$ contains a subpath $P''''$ with endpoints $w_1$ and $w_3$  (see Figure \ref{fig.-05}($c$)).
Then $G$ contains two vertex-disjoint cycles $v_1w_1P''''w_3v_1$ and $w_2v_2v_3w_2$, as desired.

This completes the proof of Lemma \ref{lemma4.3}.
\end{proof}

Let $\mathcal{G}^*_n$ be the family of graphs obtained from $2K_1+C_3$ and an independent set of size $n-5$ by
joining each vertex of the independent set to arbitrary two vertices of the triangle
(see Figure \ref{fig.02}). Clearly, every graph in $\mathcal{G}^*_n$ is planar.
Now, let $\mathcal{G}_n$ be the family of planar graphs obtained from $2K_1+C_3$ by iteratively adding vertices of degree 2 until the resulting graph has $n$ vertices.
Then $\mathcal{G}^*_n\subseteq \mathcal{G}_n$.

\begin{figure}[!ht]
	\centering
	\includegraphics[width=0.3\textwidth]{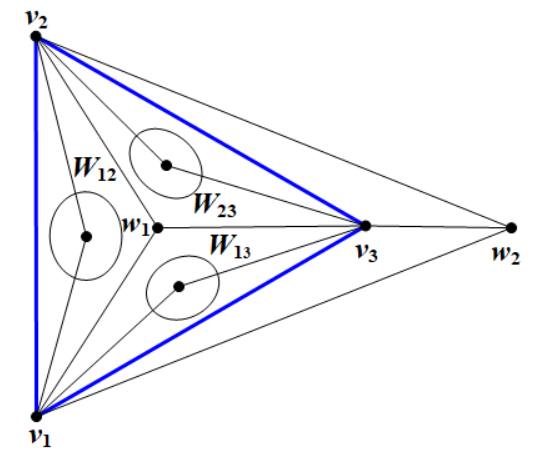}
	\caption{An extremal graph in $\mathcal{G}^*_n$.}{\label{fig.02}}
\end{figure}

\begin{lem}\label{lemma4.4}
For any graph $G\in\mathcal{G}_n$, $G$ is $2\mathcal{C}$-free if and only if $G\in\mathcal{G}^*_n$.
\end{lem}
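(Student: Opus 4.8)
The plan is to prove the two implications separately, treating the nontrivial one by contraposition. For the easy direction, suppose $G\in\mathcal{G}^*_n$ and write $v_1v_2v_3$ for the triangle and $S=V(G)\setminus\{v_1,v_2,v_3\}$. By construction every vertex of $S$ (the two apexes of $2K_1+C_3$ and the $n-5$ added vertices) is joined only to triangle vertices, so $S$ is independent. Hence on any cycle no two vertices of $S$ are consecutive, which forces every cycle to meet $\{v_1,v_2,v_3\}$ in at least two vertices; two vertex-disjoint cycles would then need at least four triangle vertices, which is absurd, so $G$ is $2\mathcal{C}$-free.

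For the converse I would prove the contrapositive: $G\in\mathcal{G}_n\setminus\mathcal{G}^*_n$ implies $G$ has two vertex-disjoint cycles. The first point is that $G\in\mathcal{G}^*_n$ exactly when $S$ is independent; indeed, if $S$ is independent then every added vertex has both its neighbours in the triangle, which is precisely the description of $\mathcal{G}^*_n$. Thus $G\notin\mathcal{G}^*_n$ produces an added vertex with a neighbour in $S$. I would fix the construction sequence $G_5=2K_1+C_3,\dots,G_n=G$ and let $w$, added to $G_{k-1}$, be the \emph{first} such vertex. Then every earlier added vertex was joined to two triangle vertices, so $G_{k-1}\in\mathcal{G}^*_{k-1}$; in particular the apexes $u_1,u_2$ are adjacent to all of $v_1,v_2,v_3$, and every non-triangle vertex of $G_{k-1}$ is adjacent only to triangle vertices. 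Since two disjoint cycles of $G_k\subseteq G$ are two disjoint cycles of $G$, it suffices to find them in $G_k=G_{k-1}+w$.

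The target is a pair of disjoint triangles: a triangle $u\,v_jv_k$ through an apex and two triangle vertices, together with a short cycle through $w$. Beforehand I would record the planarity fact that no vertex is adjacent to both apexes, since a common neighbour of $u_1,u_2$ yields a $K_5$-minor on $\{u_1,u_2,v_1,v_2,v_3\}$. Now let $a,b$ be the neighbours of $w$, not both triangle vertices, say $a\in S$. If $b=v_i$ is a triangle vertex, then $a$ is adjacent to at least two triangle vertices; when $v_i\in N(a)$ the triangle $wav_i$ together with $u\,v_jv_k$ (for an apex $u\neq a$ and $\{v_j,v_k\}=\{v_1,v_2,v_3\}\setminus\{v_i\}$) are disjoint, whereas if $v_i\notin N(a)$ then $a$ is a degree-two vertex with $N(a)=\{v_p,v_q\}$ and the branch sets $\{v_p\},\{v_q\},\{v_i\},\{u_1\},\{u_2\},\{w,a\}$ form a $K_{3,3}$-minor, so this attachment is non-planar and cannot occur. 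If instead $b\in S$, then $a,b$ are each adjacent to at least two triangle vertices and hence share one, say $v_s$; then $wav_sb$ is a $4$-cycle, and since $\{a,b\}\neq\{u_1,u_2\}$ by the planarity fact there is an apex $u\notin\{a,b\}$, giving the disjoint triangle $u\,v_jv_k$ with $\{v_j,v_k\}=\{v_1,v_2,v_3\}\setminus\{v_s\}$.

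The main obstacle is the case analysis at the attachment of $w$, and in particular ruling out the degenerate attachments: these are exactly the configurations in which a free apex or a free pair of triangle vertices would otherwise be unavailable, and the point is that planarity, through the $K_5$- and $K_{3,3}$-minors above, forbids precisely those. Establishing $G_{k-1}\in\mathcal{G}^*_{k-1}$, so that $a$ and $b$ are guaranteed to be adjacent only to triangle vertices (which drives both the shared vertex $v_s$ and the minor arguments), is what makes passing to the first bad vertex essential, and is the step I would set up most carefully.
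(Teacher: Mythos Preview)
Your argument is correct and takes a genuinely different route from the paper. Both proofs handle the easy direction identically and, for the converse, pass to the first added vertex $w$ whose neighbours are not both in the triangle (so that the preceding graph $G_{k-1}$ lies in $\mathcal{G}^*_{k-1}$). The difference is in how the attachments of $w$ are controlled. The paper fixes a planar embedding of $G_0=2K_1+C_3$, uses Jordan-curve/face reasoning to locate $w$ in a specific region, and concludes that $N(w)\subseteq\{v_1,v_2,w_2\}\cup W_{12}$ before doing a short case split on $|N(w)\cap\{v_1,v_2\}|$. You work embedding-free, using forbidden minors instead: $\{a,b\}=\{u_1,u_2\}$ is ruled out because the path $u_1wu_2$ completes a $K_5$-minor on $\{u_1,u_2,v_1,v_2,v_3\}$, and the degenerate attachment $b=v_i$, $v_i\notin N(a)$ is ruled out by the $K_{3,3}$-minor you describe. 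One small wording issue: your ``planarity fact'' as stated (``no vertex is adjacent to both apexes'') is literally false, since each $v_i$ is; what you mean, and what you actually use, is that no vertex of $S$ --- in particular $w$ --- can be adjacent to both apexes, and that statement is correct for exactly the $K_5$-minor reason you give. Your minor-based approach is slightly shorter and avoids the bookkeeping of choosing an embedding and a face; the paper's approach gives a more explicit geometric picture of where $w$ can sit and what it can see.
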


\begin{proof}

Let  $V_1:=\{v_1,v_2,v_3\}$  be the set of vertices of degree 4 and
$V_2:=\{w_1,w_2\}$ be the set of vertices of degree 3 in $2K_1+C_3$, respectively.
Then $V_1$ induces a triangle.
We first show that every graph $G$ in $\mathcal{G}^*_n$ is $2\mathcal{C}$-free.
It suffices to prove that every cycle of $G$ contains at least two vertices in $V_1$.
Let $C$ be an arbitrary cycle of $G$.
If $V(C)\subseteq V_1$, then there is nothing to prove.
It remains the case that there exists a vertex $w\in V(C)\setminus V_1$.
By the definition of $\mathcal{G}^*_n$, we can see that $N_C(w)\subseteq N_{G}(w)\subseteq V_1$.
Note that $|N_C(w)|\geq2$. Hence, $C$ contains at least two vertices in $V_1$.

In the following, we will show that every graph $G\in \mathcal{G}_n\setminus \mathcal{G}^*_n$ contains two vertex-disjoint cycles.
By the definition of $\mathcal{G}_n$,
$G$ is obtained from $2K_1+C_3$ by iteratively adding $n-5$ vertices $u_1,u_2,\dots,u_{n-5}$ of degree 2.
Now, let $G_{n-5}=G$, and $G_{i-1}=G_i-\{u_i\}$ for $i\in\{1,2,\ldots,n-5\}$.
Then $G_0\cong 2K_1+C_3$.
Moreover, $|G_i|=i+5$ and $d_{G_i}(u_{i})=2$ for each $i\in \{1,2,\dots,n-5\}$.
Now let
$$i^*=\max\{i~|~0\leq i \leq n-5,~ G_i\in \mathcal{G}^*_{i+5}\}.$$
Since $G_0=2K_1+C_3\in \mathcal{G}^*_{5}$ and $G_{n-5}\notin \mathcal{G}^*_n$, we have $0\leq i^*\leq n-6$.
By the choice of $i^*$, we know that $G_{i^*}\in \mathcal{G}^*_{i^*+5}$ and $G_{i^*+1}\notin \mathcal{G}^*_{i^*+6}$,
which implies that $N_{G_{i^*}}(u_i)\subseteq V_1$ and $N_{G_{i^*+1}}(u_{i^*+1})\not\subseteq V_1$.

\begin{figure}[!ht]
	\centering
	\includegraphics[width=0.8\textwidth]{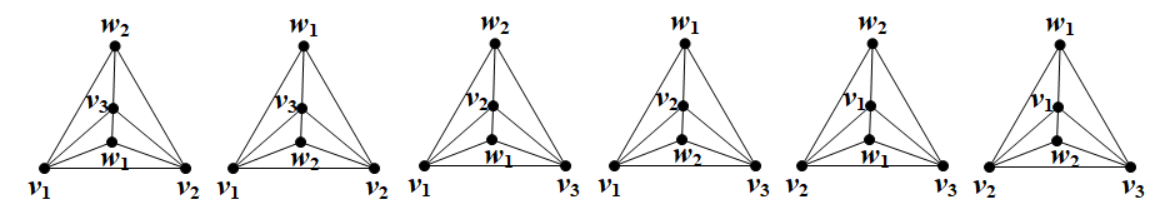}
	\caption{An extremal graph in $\mathcal{G}^*_n$.}{\label{fig.10}}
\end{figure}

Now we may assume that $G_{n-5}$ is a planar embedding of $G$,
and $G_0$ is a plane subgraph of $G_{n-5}$.
Observe that $2K_1+C_3$ has six planar embeddings (see Figure \ref{fig.10}).
Without loss of generality, assume that $G_0$ is the leftmost graph in Figure \ref{fig.10}.
Then, $u_{i^*+1}$ lies in one of the following regions (see Figure \ref{fig.10}):
$$ext(w_2v_1v_2w_2), int(w_2v_1v_3w_2), int(w_2v_2v_3w_2),$$
 $$int(w_1v_1v_2w_1), int(w_1v_1v_3w_1), int(w_1v_2v_3w_1).$$
By Lemma \ref{lemma4.2}, we can assume that $u_{i^*+1}$ lies in the outer face, that is,
$u_{i^*+1}\in ext(w_2v_1v_2w_2)$.
For simplify, we denote $C'=w_2v_1v_2w_2$.
Let $u$ be an arbitrary vertex with $uv_3\in E(G_{i^*+1})$.
Then by Lemma \ref{lemma4.1} and $v_3\in int(C')$, we have $u\in int(C')$,
and thus $uu_{i^*+1}\notin E(G_{i^*+1})$.
This implies that $N_{G_{i^*+1}}(u_{i^*+1})\subseteq V(C')\cup W_{12}$,
where $W_{12}=\{u~|~u\in V(G_{i^*+1}),~N_{G_{i^*+1}}(u)=\{v_1,v_2\}\}$.
Recall that $d_{G_{i^*+1}}(u_{i^*+1})=2$ and $N_{G_{i^*+1}}(u_{i^*+1})\not\subseteq V_1$.
Then, $|N_{G_{i^*+1}}(u_{i^*+1})\cap\{v_1,v_2\}|\le 1$.
If $|N_{G_{i^*+1}}(u_{i^*+1})\cap\{v_1,v_2\}|= 1$,
then we may assume without loss of generality that $v_1\in N_{G_{i^*+1}}(u_{i^*+1})$, and
$u'\in N_{G_{i^*+1}}(u_{i^*+1})\setminus \{v_1\}$.
Since $N_{G_{i^*+1}}(u_{i^*+1})\subseteq V(C')\cup W_{12}$,
we have $u'\in \{w_2\}\cup W_{12}$, and so $u'v_1\in E(G_{i^*+1})$.
Thus, $G_{n-5}$ contains two vertex-disjoint cycles $u_{i^*+1}v_1u'u_{i^*+1}$ and $w_1v_2v_3w_1$,
as desired.
Now consider the case that $| N_{G_{i^*+1}}(u_{i^*+1})\cap\{v_1,v_2\}|=0$.
This implies that $N_{G_{i^*+1}}(u_{i^*+1})\subseteq \{w_2\}\cup W_{12}$.
Let $N_{G_{i^*+1}}(u_{i^*+1})=\{u',u''\}$.
Then $u'v_1,u''v_1\in E(G_{i^*+1})$. Therefore,
$G_{n-5}$ contains two vertex-disjoint cycles $u_{i^*+1}u'v_1u''u_{i^*+1}$ and $w_1v_2v_3w_1$.
\end{proof}

Given a graph $G$,
let $\widetilde{G}$ be the largest induced subgraph of $G$ with minimal degree at least 3.
It is easy to see that
$\widetilde{G}$ can be obtained from $G$ by iteratively removing the vertices of degree at most 2 until the resulting graph has minimum degree at least 3 or is empty.
It is well known that $\widetilde{G}$ is unique and does not depend on the order of vertex deletion (see \cite{Pittel}).

In the following, we give the proof of Theorem \ref{theorem1.4}.

\begin{proof}
Let $n\geq5$ and $G$ be an extremal graph corresponding to ${\rm ex}_{\mathcal{P}}(n,2\mathcal{C})$.
Observe that $K_2+(P_3\cup (n-5)K_1)$ is a planar graph which contains no two vertex-disjoint cycles (see Figure \ref{fig.02}).
Thus, $e(G)\geq e(K_2+(P_3\cup (n-5)K_1))=2n-1$.

If $\widetilde{G}$ is empty, then we define $G'$ as
the graph obtained from $G$ by iteratively removing the vertices of degree at most 2
until the resulting graph has 4 vertices.
By the planarity of $G'$, we have $e(G')\leq 3|G'|-6=6$, and thus
$$e(G)\le e(G')+2(n-4)\leq6+2n-8=2n-2,$$
a contradiction.

Now we know that $\widetilde{G}$ is nonempty.
Then, $\widetilde{G}$ contains no two vertex-disjoint cycles as $\widetilde{G}\subseteq G$.
By the definition of $\widetilde{G}$, we have $\delta(\widetilde{G})\geq 3$.
By Lemma \ref{lemma4.3}, we get that $\widetilde{G}\in \{2K_1+C_3,K_1+C_{|\widetilde{G}|-1}\}$.
If $\widetilde{G}\cong K_1+C_{|\widetilde{G}|-1}$, then
$$e(G)\le e(\widetilde{G})+2(n-|\widetilde{G}|)=2(|\widetilde{G}|-1)+2(n-|\widetilde{G}|)=2n-2,$$
a contradiction. Thus, $\widetilde{G}\cong 2K_1+C_3$.
Now, $e(G)\le e(\widetilde{G})+2(n-5)=2n-1$.
Therefore, $e(G)=2n-1$,
which implies that ${\rm ex}_{\mathcal{P}}(n,2\mathcal{C})=2n-1$ and $G\in \mathcal{G}_n$.
By Lemma \ref{lemma4.4}, we have $G\in \mathcal{G}^*_n$.
This completes the proof of Theorem \ref{theorem1.4}.
\end{proof}
\section{Proof of Theorem \ref{theorem1.5}}
We shall further introduce some notations on a plane graph $G$.
A vertex or an edge of $G$ is said to be \emph{incident} with a face $F$, if
it lies on the boundary of $F$.
Clearly, every edge of $G$ is incident with at most two faces.
A face of size $i$ is called an $i$-face.
The numbers of $i$-faces and total faces are denoted by $f_i(G)$ and $f(G)$, respectively.
Let $E_3(G)$ be the set of edges incident with at least one $3$-face,
and particularly, let $E_{3,3}(G)$ be the set of edges incident with two $3$-faces.
Moreover, let $e_{3}(G)$ and $e_{3,3}(G)$ denote the cardinalities of
$E_{3}(G)$ and $E_{3,3}(G)$, respectively.
We can easily see that $3f_3(G)=e_{3}(G)+e_{3,3}(G).$

Lan, Shi and Song proved that ${\rm ex}_{\mathcal{P}}(n,K_1+P_3)\le \frac{12(n-2)}{5}$,
with equality when $n\equiv12 \pmod{20}$ (see \cite{LS}), and
${\rm ex}_{\mathcal{P}}(n,K_1+P_{k+1})\le \frac{13kn}{4k+2}-\frac{12k}{2k+1}$ for $ k\in \{3,4,5\}$ (see \cite{D}).
For $k\geq 6$,  one can easily see that ${\rm ex}_{\mathcal{P}}(n,K_1+P_{k+1})=3n-6$.
In \cite{Fang}, the authors obtained the following sharp result.

\begin{lem}\label{lemma5.1}\emph{(\cite{Fang})}
Let $n,k$ be two integers with $k\in \{2,3,4,5\}$ and $n\geq \frac{12}{6-k}+1$.
Then ${\rm ex}_{\mathcal{P}}(n,K_1+P_{k+1})\le \frac{24k}{7k+6}(n-2)$,
with equality if $n\equiv{\frac{12(k+2)}{6-k}}\pmod{{\frac{28k+24}{6-k}}}$.
\end{lem}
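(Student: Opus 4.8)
The plan is to combine Euler's formula with a counting/discharging argument driven by the local structure that $K_1+P_{k+1}$-freeness forces around each vertex; it will be cleanest to first reduce the edge bound to a bound on the number of $3$-faces. Writing $f_i(G)$ for the number of $i$-faces and $f(G)=\sum_i f_i(G)$, Euler's formula gives $n-2=e(G)-f(G)$, so the target $e(G)\le \frac{24k}{7k+6}(n-2)$ is equivalent to $f(G)\le \frac{17k-6}{24k}e(G)$. Since $\sum_i(i-4)f_i(G)=2e(G)-4f(G)$ and the left side is at least $-f_3(G)$, one has $4f(G)\le 2e(G)+f_3(G)$. Hence it suffices to prove the single clean inequality
\[
f_3(G)\le \frac{5k-6}{6k}\,e(G),\qquad\text{equivalently}\qquad e_3(G)+e_{3,3}(G)=3f_3(G)\le \frac{5k-6}{2k}\,e(G).
\]
It is reassuring that $\frac{5k-6}{2k}\ge 2$ precisely when $k\ge 6$; since $e_3(G)+e_{3,3}(G)\le 2e(G)$ always, this explains why the estimate is informative only for $k\in\{2,3,4,5\}$ and degenerates to $3n-6$ at $k=6$, and it confirms $\frac{5k-6}{6k}$ is the right target (for $k=2$ it reads $f_3(G)\le \frac13 e(G)$, the diamond-free bound).

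The combinatorial heart is a local lemma: \emph{around every vertex $v$, in the cyclic rotation of incident faces, every maximal block of consecutive $3$-faces has length at most $k-1$}. Indeed a block of $s$ consecutive $3$-faces at $v$ produces $s+1$ neighbours of $v$ joined into a path $P_{s+1}$ inside $N(v)$, and together with $v$ this is a copy of $K_1+P_{s+1}$, forcing $s\le k-1$. The sole exception is a vertex whose link is a full cycle, i.e.\ all incident faces are $3$-faces: then $N(v)$ contains $C_{d(v)}\supseteq P_{d(v)}$ but no longer path, so such a vertex only requires $d(v)\le k$. I would record these two cases and write $V_0$ for the set of such fully triangulated vertices; these are exactly the clique- or wheel-like spots where the block bound fails.

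Next I would convert the local lemma into the global estimate by double counting at vertices. Let $t(v)$ be the number of $3$-faces incident to $v$ and $g(v)$ the number of incident faces of size at least $4$. For $v\notin V_0$ the incident $3$-faces fall into at most $g(v)$ blocks, each of length $\le k-1$, giving $t(v)\le (k-1)g(v)$; for $v\in V_0$ one only has $t(v)=d(v)\le k$. Summing, and using $\sum_v t(v)=3f_3(G)$ together with $\sum_v g(v)=\sum_{i\ge 4} i f_i(G)=2e(G)-3f_3(G)$, yields
\[
3k\,f_3(G)\le 2(k-1)\,e(G)+k\,|V_0|.
\]
Comparing with the target shows it remains exactly to prove $|V_0|\le \frac{k-2}{2k}\,e(G)$. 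Here I would exploit the refined structure of $e_{3,3}(G)$: an edge lies in two $3$-faces iff it is interior to a triangular block at both endpoints, and each $v\in V_0$ is surrounded entirely by such edges closing into a cycle, so I would charge every fully triangulated vertex to the edges of the triangulated cluster it sits in and invoke planarity (a local Euler count) to forbid these clusters from overlapping or packing too densely.

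I expect the bound $|V_0|\le \frac{k-2}{2k}e(G)$ to be the main obstacle: the fully triangulated vertices are precisely where the naive estimate $t(v)\le (k-1)g(v)$ fails, and squeezing the constant from the naive $\frac{2(k-1)}{3k}$ up to the sharp $\frac{5k-6}{6k}$ demands that their surplus be charged against the surrounding quadrilateral faces and cluster edges \emph{without loss}; this no-loss accounting is the delicate step and is also what pins down the exact residue class for equality. For sharpness I would glue, in a periodic planar pattern, copies of a maximal fan-free gadget — a clique $K_{k+1}$ when it is planar and more generally a planar triangulated cluster whose interior vertices have links of length at most $k$ — joining consecutive gadgets so that the interstitial faces are quadrilaterals, checking the gadget is genuinely $K_1+P_{k+1}$-free and that the resulting density is $\frac{24k}{7k+6}(n-2)$ exactly when $n\equiv \frac{12(k+2)}{6-k}\pmod{\frac{4(7k+6)}{6-k}}$, noting $\frac{4(7k+6)}{6-k}=\frac{28k+24}{6-k}$ is the stated modulus.
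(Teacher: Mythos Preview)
This lemma is quoted from \cite{Fang}; the present paper does not give a proof of it, so there is no ``paper's own proof'' to compare against. I therefore comment only on the soundness of your outline.

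Your reduction via Euler's formula to the target $f_3(G)\le \frac{5k-6}{6k}\,e(G)$ is correct, and the local fan lemma (a block of $s$ consecutive $3$-faces at $v$ yields $K_1+P_{s+1}$, hence $s\le k-1$) is correct and standard. The subsequent double count leading to $3k\,f_3(G)\le 2(k-1)e(G)+k|V_0|$ and hence to the residual target $|V_0|\le \frac{k-2}{2k}\,e(G)$ is also correct arithmetic.

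The genuine gap is that this residual inequality, which you rightly flag as the crux, is not proved --- and more to the point, it is \emph{false} in the generality your argument treats it. Take $k=5$ and let $G$ be the icosahedron: every vertex has degree $5$ with a $C_5$ link, so $G$ is $K_1+P_6$-free, yet $|V_0|=12$ and $e(G)=30$, while $\frac{k-2}{2k}e(G)=\frac{3}{10}\cdot 30=9<12$. Likewise $f_3(G)=20>19=\frac{5k-6}{6k}e(G)$. This does not contradict the lemma, because $n=12<\frac{12}{6-5}+1=13$; but your entire scheme is purely local and never invokes the hypothesis $n\ge \frac{12}{6-k}+1$, so as written it would prove the bound for all $n$, which is impossible. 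The vague charging plan you sketch (``charge each fully triangulated vertex to its cluster and invoke a local Euler count'') is again local and cannot by itself distinguish the icosahedron from larger graphs. Any correct proof must at some point feed the global vertex count into the estimate on $V_0$ (or on $f_3$), and that mechanism is entirely missing from your proposal.

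A secondary omission: your face-size inequality $\sum_i(i-4)f_i(G)\ge -f_3(G)$ presumes each face boundary is a cycle, i.e.\ $G$ is $2$-connected; you have not reduced to that case.
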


To prove Theorem \ref{theorem1.5}, we also need an edge-extremal result on outerplanar graphs.
Let ${\rm ex}_{\mathcal{OP}}(n,C_k)$ denote the maximum number of edges
in an $n$-vertex $C_k$-free outerplanar graph.

\begin{lem}\label{lemma5.2} \emph{(\cite{Fang2})}
 Let $n,k,\lambda$ be three integers with $n\geq k\geq 3$ and $\lambda=\lfloor\frac{kn-2k-1}{k^2-2k-1}\rfloor+1$.
 Then
 $${\rm ex}_{\mathcal{OP}}(n,C_k)=\left\{
                                       \begin{array}{ll}
                                         2n-\lambda+2\big\lfloor\frac{\lambda}{k}\big\rfloor-3  & \hbox{if $k\mid \lambda$,} \\
                                         2n-\lambda+2\big\lfloor\frac{\lambda}{k}\big\rfloor-2  & \hbox{otherwise.}
                                       \end{array}
                                     \right.
$$
\end{lem}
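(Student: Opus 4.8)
The plan is to prove matching lower and upper bounds, working throughout with the \emph{weak dual} of a $2$-connected outerplane graph. First I would reduce to the $2$-connected case: since every cycle lies inside a single block, $G$ is $C_k$-free if and only if each of its blocks is, and for a connected $G$ one has $e(G)=\sum_B e(B)$ with $\sum_B(|B|-1)=n-1$, so it suffices to understand the $2$-connected extremal graphs and then check that assembling blocks at cut vertices cannot beat a single optimally chosen $2$-connected graph. For a $2$-connected outerplane graph $H$ the weak dual $T$ (one node per bounded face, adjacency = shared edge) is a tree. Assigning each bounded face $F$ the weight $w(F)=|F|-2\ge 1$, a short Euler-formula computation gives $\sum_F w(F)=n-2$ and $e(H)=n-1+r$, where $r$ is the number of bounded faces. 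Moreover every cycle of $H$ encloses a connected subtree of $T$, and its length equals $2$ plus the total weight of the enclosed faces; hence $H$ is $C_k$-free if and only if no connected subtree of $T$ has weight-sum exactly $k-2$. Maximizing $e(H)$ is therefore equivalent to maximizing $r$, i.e.\ to minimizing the total excess $\sum_F(w(F)-1)=(n-2)-r$ subject to this ``no subtree of weight $k-2$'' condition and the realizability constraint that a face of weight $w$ has degree at most $w+2$ in $T$.

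For the lower bound I would build the extremal graph from \emph{hub units}. A hub is a single $(k+1)$-face (weight $k-1$), and to it I attach up to $k$ triangulated $(k-1)$-gons (each a light block of $k-3$ triangles, i.e.\ a weight-$(k-3)$ subpath of $T$), linking consecutive hubs into a path. Within one triangulated $(k-1)$-gon every cycle has length between $3$ and $k-1$; a cycle enclosing faces on both sides of a hub must enclose the hub and so has weight $\ge k-1$. Thus every cycle length lies in $\{3,\dots,k-1\}\cup\{k+1,k+2,\dots\}$ and $C_k$ is avoided. Each hub contributes excess $k-2$, so $e=2n-3-(k-2)s$ where $s$ is the number of hubs; writing $\lambda$ for the number of light blocks one has $s=\lfloor\lambda/k\rfloor$, and solving the resulting integer relation between $\lambda$, $s$ and $n$ reproduces $\lambda=\lfloor\frac{kn-2k-1}{k^2-2k-1}\rfloor+1$ together with the stated value of $e$. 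The two cases $k\mid\lambda$ and $k\nmid\lambda$ correspond to whether the last hub is ``full'' or carries a partial set of blocks, which shifts the additive constant from $-3$ to $-2$.

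For the upper bound I would show that any $C_k$-free $2$-connected configuration incurs at least this excess. The core structural claim is that the weight-$1$ nodes of $T$ split into ``light components'' each of total weight at most $k-3$ (otherwise a subtree of weight $k-2$ appears), and that these components can only be separated by nodes of weight at least $k-1$, since a node of weight $h\le k-2$ can always be absorbed into a surrounding subtree to reach weight exactly $k-2$. Each separator thus has excess $\ge k-2$, and by the degree bound $w+2$ a weight-$(k-1)$ separator borders at most $k$ light components of weight $\le k-3$ each, so a single separator ``covers'' weight at most $(k-1)+k(k-3)=k^2-2k-1$. This forces $s\gtrsim (n-2)/(k^2-2k-1)$, and a discharging/averaging argument over $T$ then yields total excess $\ge (k-2)s$ with $s$ as large as required, matching the construction; the floor functions and the $k\mid\lambda$ dichotomy emerge from making these integer inequalities tight.

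The main obstacle is the tightness of the upper bound. It is easy to see that weight-$(k-1)$ separators are locally optimal, but one must rule out globally cheaper schemes: for odd $k$ one can avoid weight $k-2$ purely by parity using weight-$2$ faces, and in general there may be number-theoretic ``gap'' arrangements of mixed weights that never realize the value $k-2$ without any heavy separator. Showing that none of these beats the hub construction for large $n$, pinning down the exact additive constants and the two residue cases, and confirming that introducing cut vertices never helps, is the delicate step; I expect it to require a carefully tuned weighting scheme on $T$ rather than the crude averaging sketched above, together with an explicit treatment of the short leftover at the end of the chain.
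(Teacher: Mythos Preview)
This lemma is not proved in the present paper: it is quoted verbatim from the cited work \cite{Fang2} and used as a black box to derive Corollary~\ref{cor5.1}. So there is no in-paper argument to compare your proposal against; what you have written is an attempted reconstruction of the proof of the cited result.

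Your framework is the right one and is essentially how the result in \cite{Fang2} is obtained: pass to the weak dual tree $T$ of a $2$-connected outerplane graph, encode faces by their weight $w(F)=|F|-2$, translate ``$C_k$-free'' into ``no connected subtree of $T$ has total weight exactly $k-2$'', and then optimise the number of faces. The hub construction you describe (one $(k{+}1)$-face carrying several triangulated $(k{-}1)$-gons) is also the extremal construction there, and the identity $(k-1)+k(k-3)=k^2-2k-1$ is exactly the reason the denominator $k^2-2k-1$ appears in $\lambda$.

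Where your write-up is genuinely incomplete is the upper bound, and you say so yourself. Two concrete issues:
\begin{itemize}
\item The assertion that ``a node of weight $h\le k-2$ can always be absorbed into a surrounding subtree to reach weight exactly $k-2$'' is false as stated. If a weight-$1$ node has only a weight-$(k-1)$ neighbour, every connected subtree containing it has weight in $\{1\}\cup[k,\infty)$ and never hits $k-2$. So you cannot immediately force separators to have weight $\ge k-1$; faces of intermediate weight $2,\dots,k-2$ must be analysed, not discarded.
\item Your averaging step (``one separator covers weight at most $k^2-2k-1$'') uses a degree bound of $k$ for the hub, but a $(k{+}1)$-face has up to $k{+}1$ neighbours in $T$. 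The correct count has to account for the edge that links one hub's cluster to the next, and this is precisely where the case split $k\mid\lambda$ versus $k\nmid\lambda$ and the exact constants $-3$ versus $-2$ come from. The actual proof in \cite{Fang2} handles this with an explicit induction/decomposition on $T$ rather than a one-line discharging, and the ``alternative parity/gap constructions'' you worry about are disposed of there as part of that induction.
\end{itemize}
In short: correct skeleton, construction essentially matching the original, but the upper bound as written is a sketch with a false intermediate claim and would need the more careful tree argument from \cite{Fang2} to be made rigorous.
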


In particular, we can obtain the following corollary.

\begin{cor}\label{cor5.1}
Let $n\geq 4$. Then
  $${\rm ex}_{\mathcal{OP}}(n-1,C_4)=\left\{
                          \begin{array}{ll}
                          \frac{12}{7}n-5 & \hbox{if $7\mid n$,} \\
                         \big\lfloor\frac{12n-27}{7}\big\rfloor~~~ & \hbox{otherwise.}
                          \end{array}
                          \right.
$$
\end{cor}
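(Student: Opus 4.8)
The plan is to specialize Lemma~\ref{lemma5.2} to $k=4$ with $n-1$ vertices in place of $n$, and then reduce the resulting closed form to the stated expression by a residue analysis modulo $7$. Writing $m=n-1$ for the number of vertices, Lemma~\ref{lemma5.2} requires $m\geq k=4$, i.e.\ $n\geq 5$; the boundary value $n=4$ (where $m=3$) falls outside its scope, so I would first dispose of it directly, noting that a $C_4$-free outerplanar graph on $3$ vertices has at most $3$ edges (achieved by a triangle) and that $\lfloor(12\cdot4-27)/7\rfloor=3$, so the stated formula holds there as well.

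For $n\geq 5$, the first step is to evaluate the parameter $\lambda$ from Lemma~\ref{lemma5.2}. Substituting $k=4$ and $m=n-1$ gives
$$\lambda=\Big\lfloor\frac{4(n-1)-8-1}{16-8-1}\Big\rfloor+1=\Big\lfloor\frac{4n-13}{7}\Big\rfloor+1.$$
Then I would write $n=7q+r$ with $r\in\{0,1,\dots,6\}$ and use $\lfloor(4n-13)/7\rfloor=4q+\lfloor(4r-13)/7\rfloor$, which yields a short table of values of $\lambda$ in terms of $q$ for each residue $r$. From this table two things follow at once: the residue of $\lambda$ modulo $4$, which selects whether the constant in Lemma~\ref{lemma5.2} is $-3$ or $-2$ (this happens to be the branch $-3$ precisely when $r\in\{2,3\}$), and the value of $\lfloor\lambda/4\rfloor$.

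The second step is purely mechanical: for each $r$, substitute $\lambda$ and $\lfloor\lambda/4\rfloor$ into $2(n-1)-\lambda+2\lfloor\lambda/4\rfloor-c$, with $c\in\{2,3\}$ chosen by the divisibility condition, and simplify using $q=(n-r)/7$. In every case the answer collapses to a single linear expression in $n$: for $r=0$ it equals $\tfrac{12n}{7}-5$, and for $r\in\{1,\dots,6\}$ the linear value obtained can be checked to coincide with $\lfloor(12n-27)/7\rfloor$. I do not anticipate a genuine obstacle here, since the content is bookkeeping across seven cases; the only point requiring care is the reason the case $7\mid n$ must be stated separately.

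That subtlety is worth isolating in advance. When $r=0$ the true value $\tfrac{12n}{7}-5=12q-5$ is an integer, whereas the uniform expression $\lfloor(12n-27)/7\rfloor$ evaluates to $12q-4$ there; thus the floor formula is off by exactly $1$ precisely when $7\mid n$, which is why the corollary needs two branches. Confirming that $r=0$ is the \emph{only} such discrepancy---i.e.\ that $\lfloor(12n-27)/7\rfloor$ is correct for all other residues---is exactly what the case-by-case simplification of the previous paragraph establishes, so no separate argument is required beyond carrying out those simplifications carefully.
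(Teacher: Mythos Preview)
Your proposal is correct and follows exactly the approach the paper intends: the paper states Corollary~\ref{cor5.1} as an immediate specialization of Lemma~\ref{lemma5.2} without spelling out the arithmetic, and your residue analysis modulo $7$ supplies precisely those omitted details. The handling of the boundary case $n=4$ and the identification of the $r\in\{2,3\}$ branch for the $-3$ constant are both accurate.
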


\begin{figure}[!ht]
	\centering
	\includegraphics[width=0.9\textwidth]{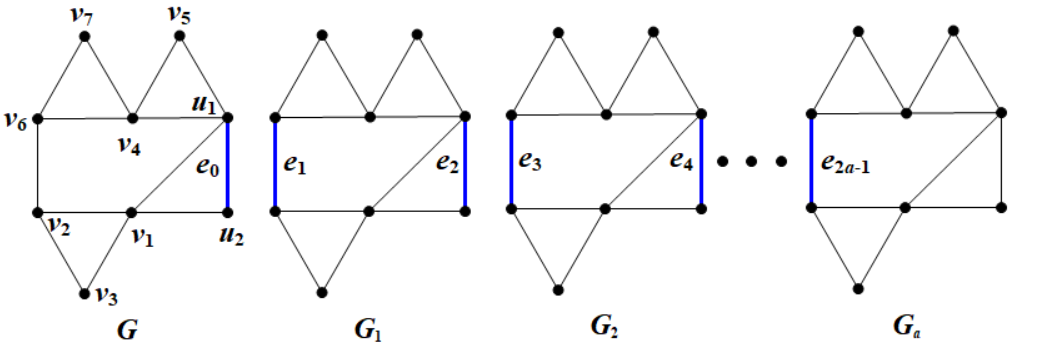}
	\caption{The constructions of $G,G_1,G_2,\ldots,G_a$. }{\label{figu.2}}
\end{figure}

For arbitrary integer $n\geq 4$, we can find a unique $(a,b)$ such that $a\geq 0$, $1\le b \le 7$ and $n-1=7a+b+2$.
Let $G$ be a $9$-vertex outerplanar graph and $G_1,\ldots,G_a$ be $a$ copies of $G$ (see Figure \ref{figu.2}).
Then, we define $G_0$ as the subgraph of $G$ induced by $\{u_1,u_2\}\cup \{v_1,v_2,\dots,v_b\}$.
One can check that $|G_0|=b+2$ and
$$ e(G_0)= \left\{
                          \begin{array}{ll}
                          \frac{12(b+2)-23}{7} & \hbox{if $7\mid (b+2-6)$,} \\
                         \big\lfloor\frac{12(b+2)-15}{7}\big\rfloor~~~ & \hbox{otherwise.}
                          \end{array}
                          \right.$$
We now construct a new graph $G^*$ from $G_0,G_1,\ldots,G_a$
by identifying the edges $e_{2i}$ and $e_{2i+1}$ for each $i\in \{0,\ldots,a-1\}$.
Clearly, $G^*$ is a connected $C_4$-free outerplanar graph with $$|G^*|=\sum_{i=0}^{a}|G_i|-2a=(2+b)+9a-2a=n-1.$$
Moreover, since $n\equiv b+2-6\,(\!\!\!\mod 7),$ we have
$$ e(G^*)= \sum_{i=0}^{a}e(G_i)-a=e(G_0)+12a=\left\{
                          \begin{array}{ll}
                          \frac{12}{7}n-5 & \hbox{if $7\mid n$,} \\
                         \big\lfloor\frac{12n-27}{7}\big\rfloor~~~ & \hbox{otherwise.}
                          \end{array}
                          \right.$$
Combining Corollary \ref{cor5.1},
$G^*$ is an extremal graph corresponding to ${\rm ex}_{\mathcal{OP}}(n-1,C_4)$.

\begin{lem}\label{lemma5.3}
Let $n\geq 2661$ and $G^{**}$ be an extremal plane graph corresponding to ${\rm ex}_{\mathcal{P}}(n,2C_4)$.
Then $G^{**}$ contains at least fourteen quadrilaterals and all of them share exactly one vertex.
\end{lem}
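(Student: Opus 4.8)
The plan is to exploit extremality through the edge count guaranteed by the explicit construction, and then to force every quadrilateral through a single vertex. First I would record the lower bound $e(G^{**})={\rm ex}_{\mathcal{P}}(n,2C_4)\ge (n-1)+{\rm ex}_{\mathcal{OP}}(n-1,C_4)\ge \frac{19n-42}{7}$, which comes from the $2C_4$-free planar graph obtained by adding a universal vertex to the extremal $C_4$-free outerplanar graph $G^*$ on $n-1$ vertices constructed above (it is $2C_4$-free because every quadrilateral runs through the apex). Two soft facts then come for free. Since a $C_4$-free planar graph has at most $\frac{15}{7}(n-2)<\frac{19n-42}{7}$ edges, $G^{**}$ contains a quadrilateral $Q_0=x_1x_2x_3x_4$; and for \emph{every} quadrilateral $Q$ the graph $G^{**}-V(Q)$ is $C_4$-free, because a quadrilateral disjoint from $Q$ would produce a copy of $2C_4$. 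Applying the latter to $Q_0$ gives $\frac{19n-42}{7}\le e(G^{**})\le \frac{15}{7}(n-6)+e(V(Q_0),V\setminus V(Q_0))+\binom{4}{2}$, so the four vertices of $Q_0$ send at least $\frac{4n+6}{7}$ edges to the rest, whence some $v\in V(Q_0)$ has $d(v)\ge \frac{n}{7}$. The goal is to upgrade this to the statement that $v$ lies on every quadrilateral, equivalently that $G^{**}-v$ is $C_4$-free.

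The heart of the proof is this common-vertex property, and I would establish it by a discharging analysis on a fixed plane embedding of $G^{**}$, which is precisely why Section~5 introduces $f_3$, $e_3$ and $e_{3,3}$. The edge surplus of $G^{**}$ over a $C_4$-free planar graph is at least $\frac{19n-42}{7}-\frac{15}{7}(n-2)=\frac{4n-12}{7}=\Omega(n)$, and such a surplus can only be carried by triangular faces sitting inside quadrilaterals; the plan is to use Euler's formula together with the identity $3f_3(G)=e_3(G)+e_{3,3}(G)$ to distribute charge and show that this linear surplus must concentrate at one vertex. The decisive input from $2C_4$-freeness is that two vertex-disjoint cherries anchored at two distinct vertices immediately yield a copy of $2C_4$, so the cherries (hence the quadrilaterals) form a strongly intersecting family; combined with planarity and the Jordan Curve Theorem (Lemma~\ref{lemma4.1}) this is what prevents the dense structure from splitting into two independent clusters. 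The main obstacle is exactly here: the crude planar bipartite estimate $e(S,V\setminus S)\le 2n-4$ for a bounded core $S$ is far too weak to beat the lower bound once added to $\frac{15}{7}(n-|S|)$, so one cannot simply strip off the vertices of a putative off-$v$ quadrilateral. Making rigorous the claim that at most one core vertex can attach linearly to the rest---so that all the surplus is pinned to $v$---is the technically delicate step, and I expect it to require the discharging argument rather than a one-line counting bound.

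Once the common vertex $v$ is secured, $G^{**}-v$ is $C_4$-free and planar, so $e(G^{**}-v)\le \frac{15}{7}(n-3)$; together with the lower bound this gives $d(v)\ge e(G^{**})-\frac{15}{7}(n-3)\ge \frac{4n+3}{7}$ and $e(G^{**}-v)=e(G^{**})-d(v)\ge \frac{12n-35}{7}$. Thus $v$ has $\Omega(n)$ neighbours and $G^{**}-v$ has $\Omega(n)$ edges, so there are $\Omega(n)$ cherries $y_1\!-\!y\!-\!y_2$ with $y_1,y_2\in N(v)$, each giving a distinct quadrilateral $vy_1yy_2$ through $v$; for $n\ge 2661$ this is far more than fourteen, which also forces the threshold $n\ge 2661$. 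Finally I would check that the common intersection of all these quadrilaterals is exactly $\{v\}$: a second common vertex $u$ would have to lie on every quadrilateral through $v$, but among the $\Omega(n)$ cherries one can always select one avoiding $u$, so $\bigcap\mathcal{Q}=\{v\}$. This yields the required fourteen quadrilaterals sharing exactly one vertex, with the discharging bound of the middle paragraph being where the real work lies.
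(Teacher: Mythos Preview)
Your outline correctly identifies the lower bound $e(G^{**})\ge\frac{19}{7}n-6$ and the soft consequences (existence of a quadrilateral, $C_4$-freeness of $G^{**}-V(Q)$), but the central step---proving that all quadrilaterals pass through a single vertex---is left as an intention rather than an argument. You write that you would ``establish it by a discharging analysis'' and then acknowledge that ``making rigorous the claim that at most one core vertex can attach linearly to the rest\dots is the technically delicate step.'' That step is the entire lemma, and nothing in the proposal actually carries it out. The side remark that ``two vertex-disjoint cherries anchored at two distinct vertices immediately yield a copy of $2C_4$'' is also not correct as stated: cherries are $P_3$'s, not $C_4$'s, and you would need the endpoints of each cherry to be neighbours of its anchor to close a $4$-cycle, which has not been arranged.

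The paper's approach sidesteps the discharging difficulty entirely. Rather than working with quadrilaterals directly, it uses Lemma~\ref{lemma5.1} in the form ${\rm ex}_{\mathcal{P}}(n,K_1+P_4)\le\frac{8}{3}(n-2)$, which for $n\ge 2661$ is strictly below $\frac{19}{7}n-6$. Since $e(G^{**})-14\cdot 9>\frac{8}{3}(n-2)$, one can iteratively delete the edge set of a copy of $K_1+P_4$ fourteen times and still exceed the $(K_1+P_4)$-free threshold, producing fourteen copies $H_1,\dots,H_{14}$ of $K_1+P_4$ that are \emph{edge}-disjoint inside $G^{**}$. A short combinatorial analysis (their Claims~5.1--5.4) of how such copies can intersect---using that $K_1+P_4$ has only three $2^+$-vertices, that any two $H_i$'s share one or two vertices, and pigeonhole---singles out a vertex $w$ that is a $2^+$-vertex in at least five of the $H_j$'s with pairwise disjoint neighbourhoods $N_{H_j}(w)$. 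Any $C_4$ in $G^{**}-w$ could then be avoided by one of these five local books, contradicting $2C_4$-freeness. This is where the bound $n\ge 2661$ and the number fourteen come from: they are exactly what is needed so that $e(G^{**})-14\cdot 9>\frac{8}{3}(n-2)$. Your proposed route via $\frac{15}{7}(n-2)$ and discharging would, if it could be completed, presumably give different (and likely worse) constants; but as written it does not supply the missing argument.
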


\begin{proof}
Note that $e(G^*)={\rm ex}_{\mathcal{OP}}(n-1,C_4)\geq \frac{12}{7}n-5$
and $G^*$ is a $C_4$-free outerplanar graph of order $n-1$.
Then $K_1+G^*$ is an $n$-vertex $2C_4$-free planar graph, and thus
\begin{align*}
 e(G^{**})\geq e(K_1+G^*)=e(G^*)+n-1\geq\frac{19}{7}n-6.
\end{align*}
On the other hand, by Lemma \ref{lemma5.1}, we have
$${\rm ex}_{\mathcal{P}}(n,K_1+P_4)\le \frac{8}{3}(n-2).$$
Note that $\frac{19}{7}n-6>\frac{8}{3}(n-2)$ for $n\geq 2661$.
Then $G^{**}$ contains a copy, say $H_1$, of $K_1+P_4$.
Let $G_1$ be the graph obtained from $G^{**}$ by deleting all edges within $V(H_1)$.
Since $|H_1|=5$, we have $e(G_1)\geq e(G^{**})-(3|H_1|-6)=e(G^{**})-9>\frac{8}{3}(n-2)$.
Thus, $G_1$ contains a copy, say $H_2$, of $K_1+P_4$. Now we can obtain a new graph $G_2$
from $G_1$ by deleting all edges within $V(H_2)$.
Note that $e(G^{**})-14\times9>\frac{8}{3}(n-2)$.
Repeating above steps, we can obtain a graph sequence $G_1,G_2,\ldots,G_{14}$
and fourteen copies $H_1,H_2,\cdots,H_{14}$ of $K_1+P_4$ such that $H_i\subseteq G_{i-1}$
and $G_{i}$ is obtained from $G_{i-1}$ by deleting all edges within $V(H_i)$.
This also implies that $G^{**}$ contains at least fourteen quadrilaterals.
We next give four claims on those copies of $K_1+P_4$.

\begin{claim}\label{claim5.1}
Let $i,j$ be two integers with $1\le i<j\le 14$ and $v\in V(H_i)\cap V(H_j)$.
Then, $V(H_i)\cap N_{H_j}(v)=\varnothing$.
\end{claim}

\begin{proof}
Suppose to the contrary that there exists a vertex $w\in V(H_i)\cap N_{H_j}(v)$.
Note that $v,w\in V(H_i)$.
By the definition of $G_i$, whether $vw\in E(H_i)$ or not, we can see that $vw\notin E(G_i)$.
On the other hand,
note that $H_j\subseteq G_{j-1}\subseteq G_i$, then
$vw\in E(H_j)\subseteq E(G_i)$, contradicting $vw\notin E(G_i).$
Hence, the claim holds.
\end{proof}

\begin{claim}\label{claim5.2}
$|V(H_i)\cap V(H_j)|\in\{1,2\}$ for any two integers $i,j$ with $1\le i<j\le 14$.
\end{claim}

\begin{proof}
If $H_i$ and $H_j$ are vertex-disjoint, then $G^{**}$ contains $2C_4$, a contradiction.
Now suppose that there exist three vertices $v_1,v_2,v_3\in V(H_i)\cap V(H_j)$.
Observe that $K_1+P_4$ contains no an independent set of size $3$.
Then $H_j[\{v_1,v_2,v_3\}]$ is nonempty.
Assume without loss of generality that $v_1v_2\in E(H_j)$.
Then $v_2\in V(H_i)\cap N_{H_j}(v_1)$, which contradicts Claim \ref{claim5.1}.
Therefore, $1\le |V(H_i)\cap V(H_j)|\le 2$.
\end{proof}

Now for convenience, a vertex $v$ in a graph $G$ is called a \emph{$2$-vertex}
if $d_G(v)=2$, and a \emph{$2^+$-vertex}
if $d_G(v)>2.$ Clearly, every copy of $K_1+P_4$ contains two 2-vertices and three $2^+$-vertices.

\begin{claim}\label{claim5.3}
Let $\mathcal{H}$ be the family of graphs $H_i$ ($1\leq i\leq14$) such that
every 2-vertex in $H_i$ is a $2^+$-vertex in $H_1$.
Then $|\mathcal{H}|\le 3$.
\end{claim}

\begin{proof}
Note that $H_1$ contains only three $2^+$-vertices, say $v_1,v_2$ and $v_3$.
Then every graph $H_i\in \mathcal{H}$ must contain two of $v_1,v_2$ and $v_3$ as $2$-vertices.
Suppose to the contrary that $|\mathcal{H}|\geq 4$.
By pigeonhole principle, there exist two graphs $H_{i_1},H_{i_2}\in \mathcal{H}$ such that
they contain the same two 2-vertices, say $v_1,v_2$.
It follows that $H_{i_j}-\{v_j\}$ contains a $4$-cycle for $j\in \{1,2\}$.
By Claim \ref{claim5.2}, we have $V(H_{i_1})\cap V(H_{i_2})=\{v_1,v_2\}$,
which implies that $H_{i_1}-\{v_1\}$ and $H_{i_2}-\{v_2\}$ are vertex-disjoint.
Hence, $G^{**}$ contains two vertex-disjoint 4-cycles, a contradiction.
\end{proof}

\begin{claim}\label{claim5.4}
Let $j$ be an integer with $2\leq j\leq14$ and $H_j\notin \mathcal{H}$.
Then, there exists a vertex $v\in V(H_1)\cap V(H_{j})$ such that $d_{H_1}(v)\geq 3$ and $d_{H_{j}}(v)\geq 3$.
\end{claim}

\begin{proof}
By Claim \ref{claim5.2}, we have $1\le |V(H_1)\cap V(H_j)|\le 2$.
We first assume that $V(H_1)\cap V(H_j)=\{u\}$.
If $d_{H_1}(u)\geq 3$ and $d_{H_j}(u)\geq3$, then there is nothing to prove.
If $d_{H_1}(u)=2$, then
$G^{**}$ contains two vertex-disjoint subgraphs $H_1-\{u\}$ and $H_j$, and thus $2C_4$,
a contradiction.
If $d_{H_j}(u)=2$, then we can similarly get a contradiction.
Therefore, $|V(H_1)\cap V(H_j)|=2$.

Now, assume that $V(H_1)\cap V(H_j)=\{u_1,u_2\}$.
We first deal with the case $d_{H_j}(u_1)$ $=d_{H_j}(u_2)=2$.
Since $H_j\notin \mathcal{H}$, one of $\{u_1,u_2\}$, say $u_1$, is a 2-vertex in $H_1$.
Hence, $G^{**}$ contains two vertex-disjoint subgraphs $H_1-\{u_1\}$ and $H_j-\{u_2\}$, and so $2C_4$,
a contradiction.
Thus, there exists some $i\in \{1,2\}$ with $d_{H_j}(u_i)\geq 3$.
If $d_{H_1}(u_i)\geq 3$, then we are done.
If $d_{H_1}(u_i)=2$, then
we define $H_j'$ as the subgraph of $H_j$ induced by $N_{H_j}(u_i)\cup\{u_i\}$.
Since $d_{H_j}(u_i)\geq 3$, we can check that $H_j'$ always contains a $C_4$.
Moreover, since $d_{H_1}(u_i)=2$, we can see that $H_1-\{u_i\}$ also contains a $C_4$.
On the other hand, by Claim \ref{claim5.1},
we have $N_{H_j}(u_i)\cap V(H_1)=\varnothing$,
which implies that $H_j'$ and $H_1-\{u_i\}$ are vertex-disjoint.
Therefore, $G^{**}$ contains $2C_4$, a contradiction.
\end{proof}

By Claim \ref{claim5.3}, $|\mathcal{H}|\leq 3$, thus there are at least ten graphs
in $\{H_j~|~2\leq j \leq 14\}\setminus \mathcal{H}$.
However, $H_1$ has only three $2^+$-vertices.
By Claim \ref{claim5.4} and pigeonhole principle, there exists a $2^+$-vertex $w$ in $H_1$
and four graphs, say $H_2,H_3,H_4,H_5$, of $\{H_j~|~2\leq j \leq 14\}\setminus \mathcal{H}$.
By Claim \ref{claim5.1},
we get that $N_{H_j}(w)\cap V(H_i)=\varnothing$, and so $N_{H_j}(w)\cap N_{H_i}(w)=\varnothing$, for any $i,j$ with $1\leq i<j\leq 5$.
If $G^{**}-\{w\}$ contains a quadrilateral $C'$,
then there exists some $j'\le 5$ such that $N_{H_{j'}}(w)\cap V(C')=\varnothing$ as $|C'|=4$.
Since $w$ is a $2^+$-vertex in $H_{j'}$, we can observe that the subgraph of $H_{j'}$ induced by $N_{H_{j'}}(w)\cup\{w\}$ must contain a $C_4$.
Consequently, $G^{**}$ is not $2C_4$-free, a contradiction.
Thus, $G^{**}-\{w\}$ is $C_4$-free,
which implies that all quadrilaterals of $G^{**}$ share exactly one vertex.
This completes the proof of Lemma \ref{lemma5.3}.
\end{proof}

Now we are ready to give the proof of Theorem \ref{theorem1.5}.

\begin{proof}
Recall that $G^*$ is an extremal graph corresponding to ${\rm ex}_{\mathcal{OP}}(n-1,C_4)$.
Then $K_1+G^*$ is planar and $2C_4$-free.
By Corollary \ref{cor5.1}, we have
\begin{align}\label{align.-1}
 e(K_1+G^*)={\rm ex}_{\mathcal{OP}}(n-1,C_4)+n-1=\left\{
                          \begin{array}{ll}
                          \frac{19}{7}n-6 & \hbox{if $7\mid n$,} \\
                         \big\lfloor\frac{19n-34}{7}\big\rfloor~ & \hbox{otherwise.}
                          \end{array}
                          \right.
\end{align}
To prove Theorem \ref{theorem1.5},
it suffices to show ${\rm ex}_{\mathcal{P}}(n,2C_4)=e(K_1+G^*).$
Since $G^{**}$ is an extremal plane graph corresponding to ${\rm ex}_{\mathcal{P}}(n,2C_4)$,
we have $e(G^{**})\geq e(K_1+G^*)$.
In the following, we show that $e(G^{**})\le e(K_1+G^*)$.

\begin{figure}[!ht]
	\centering
	\includegraphics[width=0.4\textwidth]{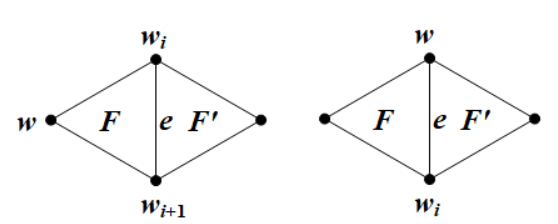}
	\caption{Two possible structures of $H(e)$. }{\label{fig.-11}}
\end{figure}

By Lemma \ref{lemma5.3},
all quadrilaterals of $G^{**}$ share a vertex $w$.
Thus, $G^{**}-\{w\}$ is $C_4$-free.
Assume that $d_{G^{**}}(w)=s$ and $w_1,\dots,w_s$ are
around $w$ in clockwise order, with subscripts interpreted modulo $s$.
Let $e$ be an arbitrary edge in $E_{3,3}(G^{**})$, that is,
$e$ is incident with two 3-faces, say $F$ and $F'$.
We define $H(e)$ as the plane subgraph induced by all edges incident with $F$ and $F'$.
Clearly, $H(e)\cong K_1+P_3$ and so it contains a $C_4$.
Recall that all quadrilaterals of $G^{**}$ share exactly one vertex $w$.
Then, $w\in V(H_e)$ and $w$ is incident with at least one face of $F$ and $F'$
(see Figure \ref{fig.-11}).
Note that $e$ is incident with $F$. Then,
either $e=ww_i$ or $e=w_iw_{i+1}$ for some $i\in \{1,2,\dots,s\}$.
By the choice of $e$, we have
\begin{align}\label{align.-02}
 E_{3,3}(G^{**})\subseteq \{ww_{i},w_iw_{i+1}~|~1\leq i\leq s\}.
\end{align}

Assume first that $f_4(G^{**})=t\geq1$ and $F_1,\dots,F_t$ are 4-faces in $G^{**}$.
Since every 4-face is a quadrilateral,
$w$ is incident with each 4-face.
Consequently,
there exists $j_i\in \{1,\dots,s\}$ such that $ww_{j_i},ww_{j_i+1}$ are incident with $F_i$ for each $i\in \{1,\dots,t\}$.
Thus, $ww_{j_i}\notin E_{3,3}(G^{**})$ for $1\le i \le t$.
On the other hand,
if $w_{j_i}w_{j_i+1}\in E_{3,3}(G^{**})$, then $H(w_{j_i}w_{j_i+1})$ contains a $C_4$,
and so $w\in V(H(w_{j_i}w_{j_i+1}))$.
This implies that $ww_{j_i}w_{j_i+1}w$ is a 3-face in $G^{**}$,
contradicting the fact that $ww_{j_i},ww_{j_i+1}$ are incident with the 4-face $F_i$.
Thus, we also have $w_{j_i}w_{j_i+1}\notin E_{3,3}(G^{**})$ for $1\le i \le t$.

By the argument above, we can see that
\begin{align}\label{align.-03}
 E_{3,3}(G^{**})\cap \{ww_{j_i},w_{j_i}w_{j_i+1}~|~1\le i \le t\}=\varnothing.
\end{align}
Using (\ref{align.-02}) and (\ref{align.-03}) gives $e_{3,3}(G^{**})\le 2s-2t=2s-2f_4(G^{**})$.
Hence,
\begin{align}\label{align.-04}
  3f_3(G^{**})=e_3(G^{**})+e_{3,3}(G^{**})\le e(G^{**})+2s-2f_4(G^{**}).
\end{align}
On the other hand,
\begin{align*}
2e(G^{**})=\sum_{i\geq 3}if_i(G^{**})\geq 3f_3(G^{**})+4f_4(G^{**})+5(f(G^{**})-f_3(G^{**})-f_4(G^{**})),
\end{align*}
which yields $f(G^{**})\le\frac{1}{5}\left(2e(G^{**})+2f_3(G^{**})+f_4(G^{**})\right).$
Combining this with Euler's formula $f(G^{**})=e(G^{**})-(n-2)$, we obtain
\begin{align}\label{align.3}
  e(G^{**})\le \frac{5}{3}(n-2)+\frac{2}{3}f_3(G^{**})+\frac{1}{3}f_4(G^{**}).
\end{align}
If $f_4(G^{**})=t=0$, then (\ref{align.-04}) and (\ref{align.3}) hold directly.
Combining (\ref{align.-04}) and (\ref{align.3}), we have $e(G^{**})\le \frac{15}{7}(n-2)+\frac{4}{7}s-\frac{1}{7}f_4(G^{**})$.
Recall that $d_{G^{**}}(w)=s\leq n-1$.
If $s\le n-2$, then $e(G^{**})\leq \lfloor\frac{19}{7}(n-2)\rfloor\leq e(K_1+G^*)$ by (\ref{align.-1}),
as desired.
If $s=n-1$,
then $w$ is a dominating vertex of the planar graph $G^{**}$,
which implies that $G^{**}-\{w\}$ is outerplanar.
Recall that $G^{**}-\{w\}$ is $C_4$-free.
Thus, $e(G^{**}-\{w\})\le {\rm ex}_{\mathcal{OP}}(n-1,C_4)$,
and so $e(G^{**})\leq {\rm ex}_{\mathcal{OP}}(n-1,C_4)+n-1$.
Combining (\ref{align.-1}), we get $e(G^{**})\leq e(K_1+G^*)$,
as required.
This completes the proof of Theorem \ref{theorem1.5}.
\end{proof}

\end{document}